\crefname{equation}{}{}
\crefname{figure}{{\sc Figure}}{{\sc Figure}}
\newtheorem{theorem}{Theorem}[section]
\newtheorem{proposition}[theorem]{Proposition}
\newtheorem{lemma}[theorem]{Lemma}
\newtheorem{conjecture}[theorem]{Conjecture}
\newtheorem*{claim*}{Claim}
\theoremstyle{definition}
\newtheorem{algorithm}[theorem]{Algorithm}
\newtheorem{example}[theorem]{Example}
\newtheorem{definition}[theorem]{Definition}
\newtheorem{remark}[theorem]{Remark}
\numberwithin{equation}{section} 
\numberwithin{figure}{section}
\numberwithin{table}{section}
\def\Z{\mathbb Z}
\def\C{\mathbb C}
\newcommand{\nc}{\newcommand}
\nc{\SG}{\mathfrak{S}}
\nc{\SRT}{\mathrm{SRT}}
\nc{\comp}{\mathrm{comp}}
\nc{\Des}{\mathrm{Des}}
\nc{\set}{\mathrm{set}}
\nc{\ch}{\mathrm{ch}}
\nc{\id}{\mathrm{id}}
\nc{\Sym}{\mathrm{Sym}}
\nc{\QSym}{\mathrm{QSym}}
\nc{\Ext}{\mathrm{Ext}}
\nc{\source}{\mathsf{source}}
\nc{\sink}{\mathsf{sink}}
\nc{\len}{\mathsf{len}}
\nc{\ind}{\mathtt{index}}
\nc{\col}{\mathsf{col}}
\nc{\row}{\mathsf{row}}
\nc{\rad}{\mathrm{rad}}
\nc{\IGLT}{\mathrm{IGLT}}
\nc{\Top}{\mathsf{Top}}
\nc{\Bot}{\mathsf{Bot}}
\nc{\pr}{\mathsf{pr}}
\nc{\Gr}{\mathrm{Gr}}
\nc{\RS}{\star}
\nc{\calS}{\mathcal{S}}
\nc{\calI}{\mathcal{I}}
\nc{\calR}{\mathcal{R}}
\nc{\calG}{\mathcal{G}}
\nc{\calP}{\mathcal{P}}
\nc{\calE}{\mathcal{E}}
\nc{\bfS}{\mathbf{S}}
\nc{\bfF}{\mathbf{F}}
\nc{\bfP}{\mathbf{P}}
\nc{\bfG}{\mathbf{G}}
\nc{\rmr}{\mathrm{r}}
\nc{\rmc}{\mathrm{c}}
\nc{\rmt}{\mathrm{t}}
\nc{\rmw}{\mathrm{w}}
\nc{\rmv}{\mathrm{v}}
\nc{\sfB}{\mathsf{B}}
\nc{\sfb}{\mathsf{b}}
\nc{\sft}{\mathsf{t}}
\nc{\sfp}{\mathsf{p}}
\nc{\sfq}{\mathsf{q}}
\nc{\sfA}{\mathsf{A}}
\nc{\sfD}{\mathsf{D}}
\nc{\scrT}{\mathscr{T}}
\nc{\tH}{\mathtt{H}}
\nc{\tC}{\mathtt{C}}
\nc{\tyd}{\mathtt{yd}}
\nc{\trd}{\mathtt{rd}}
\nc{\tHL}{\mathtt{HL}}
\nc{\sfhA}{\widehat{\mathsf{A}}}
\nc{\sfread}{\mathsf{read}}
\nc{\sfLread}{\Phi}
\nc{\sfhD}{\widehat{\mathsf{D}}}
\nc{\itread}{\mathit{read}}
\nc{\ra}{\rightarrow}
\nc{\opi}{\overline{\pi}}
\nc{\bal}{{\boldsymbol{\upalpha}}}
\nc{\bfpi}{\boldsymbol{\uppi}}
\nc{\tGam}{\widetilde{\Gamma}}
\nc{\hGam}{\widehat{\Gamma}}
\nc{\GS}[1]{U_{#1}}
\nc{\GSm}[2]{U_{#1;#2}}
\nc{\grco}[2]{(\underline{#1,#2})}
\nc{\IGLTm}[2]{\mathrm{IGLT}(#1)_{#2}}
\nc{\yh}[1]{\todo[size=\tiny,color=cyan!10]{#1 \\ \hfill --- Young-Hun}}
\nc{\YH}[1]{\todo[size=\tiny,inline,color=cyan!10]{#1
		\\ \hfill --- Young-Hun}}
\nc{\sm}[1]{\todo[size=\tiny,color=magenta!10]{#1 \\ \hfill --- Semin}}
\nc{\SM}[1]{\todo[size=\tiny,inline,color=magenta!10]{#1
		\\ \hfill --- Semin}}
\nc{\nt}[1]{\todo[size=\tiny,color=exgreen!10]{#1 \\ \hfill --- Note}}
\nc{\NT}[1]{\todo[size=\tiny,inline,color=exgreen!10]{#1
		\\ \hfill --- Note}}
\definecolor{purple}{rgb}{0.44, 0.0, 1.0}
\definecolor{exgreen}{cmyk}{0.8, 0.1, 1, 0}
\definecolor{yhblue}{rgb}{0,0,0.6}
\newenvironment{red}{\relax\color{red}}{\hspace*{.5ex}\relax}
\newenvironment{blue}{\relax\color{yhblue}}{\hspace*{.5ex}\relax}
\newenvironment{green}{\relax\color{wsgreen}}{\hspace*{.5ex}\relax}
\newenvironment{magenta}{\relax\color{magenta}}{\hspace*{.5ex}\relax}
\nc{\ber}{\begin{red}}
\nc{\er}{\end{red}}
\nc{\beb}{\begin{blue}}
\nc{\eb}{\end{blue}}
\nc{\bema}{\begin{magenta}}
\nc{\ema}{\end{magenta}}
\nc{\begr}{\begin{green}}
\nc{\egr}{\end{green}}
\title[Weak Bruhat interval modules for genomic Schur functions]{Weak Bruhat interval modules \\ for genomic Schur functions}
\author[Y.-H. Kim]{Young-Hun Kim}
\address{Center for quantum structures in modules and spaces, Seoul National University, Seoul 08826, Republic of Korea}
\email{ykim.math@gmail.com}
\author[S. Yoo]{Semin Yoo}
\address{Discrete Mathematics Group, Institute for Basic Science, 55 Expo-ro Yuseong-gu, Daejeon 34126, South Korea}
\email{syoo19@ibs.re.kr}
\thanks{The first author was supported by the National Research Foundation of Korea(NRF) grant funded by the Korea government (NRF-2020R1A5A1016126 and NRF-2022R1A2C1004045) and Basic Science Research Program through NRF funded by the Ministry of Education (RS-2023-00240377). 
The second author was supported by the KIAS Individual Grant (CG082701) at
Korea Institute for Advanced Study and the Institute for Basic Science (IBS-R029-C1).}
\keywords{$0$-Hecke algebra, weak Bruhat order, genomic Schur function, quasisymmetric characteristic, projective cover}
\date{\today}
\subjclass[2020]{20C08, 05E10, 05E05, 14M15}
\begin{document}

\maketitle

\begin{abstract}
Let $\lambda$ be a partition of a positive integer $n$. The genomic Schur function $U_\lambda$ was introduced by Pechenik--Yong in the context of the $K$-theory of Grassmannians. Recently, Pechenik provided a positive combinatorial formula for the fundamental quasisymmetric expansion of $U_\lambda$ in terms of increasing gapless tableaux. In this paper, for each $1 \le m \le n$, we construct an $H_m(0)$-module $\mathbf{G}_{\lambda;m}$ whose image under the quasisymmetric characteristic is the $m$th degree homogeneous component of $U_\lambda$ by defining an $H_m(0)$-action on increasing gapless tableaux. We provide a method to assign a permutation to each increasing gapless tableau, and use this assignment to decompose $\mathbf{G}_{\lambda;m}$ into a direct sum of weak Bruhat interval modules. Furthermore, we determine the projective cover of each summand of the direct sum decomposition.
\end{abstract}

\tableofcontents

\section{Introduction}
Let $X = \Gr_k(\C^n)$ be the Grassmannian of $k$-dimensional subspaces of $\C^n$.
The Schur functions play a central role to understand the structure of the cohomology ring $H^*(X,\Z)$.
For example, the structure constants of Schur functions, called the \emph{Littlewood-Richardson coefficients}, are equal to that of \emph{Schubert classes} $[X_\lambda]$ ($\lambda \in \mathrm{Rec}_{k,n-k}$) which form a $\Z$-basis of $H^*(X,\Z)$.
Here, $\mathrm{Rec}_{k,n-k}$ is the set of partitions whose Young diagrams are contained in a $k \times (n-k)$ rectangle and $X_\lambda$ is the \emph{Schubert variety} associated to $\lambda$.
For more information, see \cite[Part III]{97Ful}.

Since the early 2000s, several combinatorial interpretations for the \emph{$K$-theoretic Littlewood-Richardson rule} have been introduced.
We briefly introduce the relevant results.
Let $K(X)$ be the Grothendieck ring of algebraic vector bundles over $X$.
It is well known that the classes of structure sheaves $[\mathcal{O}_{X_\lambda}]$ ($\lambda \in \mathrm{Rec}_{k,n-k}$) of $X_\lambda$ form a $\Z$-basis of $K(X)$ (for instance, see \cite[Remark 3.4.2]{05Brion}).
For $\lambda, \mu, \nu \in \mathrm{Rec}_{k,n-k}$, let $c^\nu_{\lambda,\mu}$ be the integer defined by 
\[
[\mathcal{O}_{X_\lambda}] \cdot[\mathcal{O}_{X_\mu}] = \sum_{\nu \in \mathrm{Rec}_{k,n-k}} c^\nu_{\lambda,\mu} [\mathcal{O}_{X_\nu}].
\]
The coefficient $c^\nu_{\lambda,\mu}$ is called a \emph{$K$-theoretic Schubert structure constant}, or sometimes a \emph{$K$-theoretic Littlewood-Richardson coefficient}.
In \cite{02Buch}, Buch provided the first combinatorial description for $(-1)^{|\nu| - |\lambda| - |\mu|} c^\nu_{\lambda,\mu}$ by using set valued tableaux.
Afterwards, several combinatorial models for the $K$-theoretic Schubert structure constants have been constructed.
For instance, see \cite{04KY, 17PY2, 09TY, 06Vakil}.
In particular, Pechenik and Yong \cite{17PY2} gave a combinatorial description for $(-1)^{|\nu| - |\lambda| - |\mu|} c^\nu_{\lambda,\mu}$ by using \emph{genomic tableaux}.

Genomic tableaux were first introduced by Pechenik and Yong \cite{17PY} in the context of the torus-equivariant $K$-theoretic Schubert calculus.
These were defined as edge-labeled tableaux with certain conditions and used as the key object in the first proof of a conjecture of Thomas and Yong \cite{18TY} on the torus-equivariant $K$-theoretic Schubert structure coefficients $K^\nu_{\lambda,\mu}$, where $\lambda, \mu, \nu \in \mathrm{Rec}_{k,n-k}$.
Soon after, in \cite{17PY3}, genomic tableaux were also used to prove a mild modification of a conjecture of Knutson and Vakil \cite{09CV} on $K^\nu_{\lambda,\mu}$.
In a sequel paper \cite{17PY2}, Pechenik and Yong studied combinatorial theory of non-edge-labeled genomic tableaux, providing a combinatorial description for $(-1)^{|\nu| - |\lambda| - |\mu|} c^\nu_{\lambda,\mu}$ in terms of genomic tableaux as mentioned above.
Therein, they defined a symmetric function $U_\lambda$, called the \emph{genomic Schur function}, as a generating function for genomic tableaux of shape $\lambda$ for all partitions $\lambda$.
Further, they proved that $\{U_\lambda \mid \text{$\lambda$ is a partition}\}$ is a basis for the ring of symmetric functions and pointed out that genomic Schur functions are not Schur-positive in general.
As an alternative positivity, Pechenik~\cite{20Pechenik} showed that genomic Schur functions are fundamental positive.
Specifically, for any partition $\lambda$,
\begin{align}\label{eq: GS F expansion}
\GS{\lambda} = \sum_{T \in \IGLT(\lambda)} F_{\comp(T)},
\end{align}
where $\IGLT(\lambda)$ is the set of \emph{increasing gapless tableaux} of shape $\lambda$, $\comp(T)$ is the composition associated to $T$, and $F_{\comp(T)}$ is the \emph{fundamental quasisymmetric function} associated to $\comp(T)$.
For the precise definitions, see \cref{subsec: Genomic Schur}. 
In addition, Pechenik left remarks on interpretations of \cref{eq: GS F expansion} in terms of representation theory of the $0$-Hecke algebras.
Before discussing these remarks, we review the representation theory of $0$-Hecke algebras.

The \emph{$0$-Hecke algebra $H_n(0)$} is the $\C$-algebra obtained from the Hecke algebra $H_n(q)$ by specializing $q$ to $0$.
In \cite{79Norton}, Norton classified all irreducible $H_n(0)$-modules up to isomorphism.
These modules correspond in a natural way to compositions $\alpha$ of $n$.
We denote by $\bfF_\alpha$ the irreducible module corresponding to $\alpha$. 
Duchamp, Krob, Leclerc, and Thibon \cite{96DKLT} revealed a deep connection between the representation theory of the $0$-Hecke algebras and the ring $\QSym$ of quasisymmetric functions by introducing the \emph{quasisymmetric characteristic}, which is a ring isomorphism 
$$
\ch: \bigoplus_{n \ge 0} \calG_{0}(H_n(0)\text{\bf -mod}) \ra \QSym, \quad [\bfF_\alpha] \mapsto F_\alpha.
$$
Here, $H_n(0)\text{\bf -mod}$ is the category of the finite dimensional $H_n(0)$-modules, $\calG_{0}(H_n(0)\text{\bf -mod})$ is the Grothendieck group of $H_n(0)\text{\bf -mod}$, and $\bigoplus_{n \ge 0} \calG_{0}(H_n(0)\text{\bf -mod})$ is considered as the ring equipped with the induction product.
In view of this correspondence, there have been considerable attempts to provide a representation theoretic interpretation of noteworthy quasisymmetric functions by constructing appropriate $0$-Hecke modules.
For instance, Berg {\it et al.} \cite{15BBSSZ} provided such interpretation for the \emph{dual immaculate functions}, Tewari and van Willigenburg \cite{15TW} for the \emph{quasisymmetric Schur functions}, and Searles \cite{19Searles} for the \emph{extended Schur functions}.
Readers interested in relevant results may also refer to \cite{22BS, 20CKNO, 22CKNO2, 20CKNO2, 22JKLO, 19TW}.

Very recently, Jung, Kim, Lee, and Oh \cite{22JKLO} introduced the \emph{weak Bruhat interval module $\sfB(\sigma, \rho)$ associated to $[\sigma,\rho]_L$} to provide a unified method to study the $H_n(0)$-modules introduced in \cite{22BS, 15BBSSZ, 20CKNO2, 19Searles, 15TW, 19TW}.
Here, $\sigma$ and $\rho$ are arbitrary permutations in the symmetric group $\SG_n$ and $[\sigma,\rho]_L$ is the left weak Bruhat interval from $\sigma$ to $\rho$.
Indeed, they proved that all indecomposable direct summands of these $H_n(0)$-modules are contained in the family of weak Bruhat interval modules up to isomorphism.
They also investigated several structural properties of weak Bruhat interval modules such as embeddings into the regular representation of $H_n(0)$, the induction product, restrictions, and (anti-)involution twists of weak Bruhat interval modules.
In addition, they implicitly remarked that $\bigoplus_{n \ge 0} \calG_{0}(\mathscr{B}_n)$ is isomorphic to $\QSym$, where $\mathscr{B}_n$ is the full subcategory of $H_n(0)\text{\bf -mod}$ whose objects are direct sums of weak Bruhat interval modules up to isomorphism, $\calG_{0}(\mathscr{B}_n)$ is the Grothendieck group of $\mathscr{B}_n$, and $\bigoplus_{n \ge 0} \calG_{0}(\mathscr{B}_n)$ is considered as the ring equipped with the induction product.
For more information, see \cite{22JKLO}.
Another uniform way to study various $0$-Hecke modules was also proposed in \cite{22Searles} by introducing \emph{diagram modules}.

The aforementioned remarks of Pechenik are concerned with the problem of finding an appropriate $0$-Hecke module whose image, under the quasisymmetric characteristic, is a homogeneous component of the genomic Schur function $U_{\lambda}$. 
It was shown that the fifth degree homogeneous component of $U_{(3,3)}$ cannot be the image of quasisymmetric characteristic of any projective $H_{5}(0)$-module. 
The problem for finding indecomposable $0$-Hecke modules for a homogeneous component of $U_{\lambda}$ was also considered. 
However, there was no successful answer for this problem. 
We show here that for some partition $\lambda$ of $n$ and $1\le m \le n$, it is impossible to construct an indecomposable $H_m(0)$-module $M$ such that $\ch([M])$ is the $m$th degree homogeneous component $\GSm{\lambda}{m}$ of $U_\lambda$.
Precisely, by considering the $\Ext$-group between irreducible $H_3(0)$-modules, we show that there does not exist any indecomposable $H_3(0)$-module $M$ such that $\ch([M])$ is the third degree homogeneous component of $U_{(2,1,1)}$ (\cref{rem: counterexample for indecomposable module}).

The purpose of this paper is to provide a nice representation theoretic interpretation of genomic Schur functions.
To achieve our purpose, we construct an $H_m(0)$-module $\mathbf{G}_{\lambda;m}$ by defining an $H_m(0)$-action on the $\C$-span of the set $\IGLTm{\lambda}{m}$ of increasing gapless tableaux of shape $\lambda$ with maximum entry $m$.
The $H_m(0)$-module $\mathbf{G}_{\lambda;m}$ is well suited for our purpose in that it satisfies the following properties:
\begin{enumerate}[label = $\bullet$]
\item The image of $\bfG_{\lambda;m}$ under the quasisymmetric characteristic is $\GSm{\lambda}{m}$. 
\item $\mathbf{G}_{\lambda;m}$ can be decomposed into a direct sum of weak Bruhat interval modules $\bfG_E$.
\item The projective cover of $\bfG_E$ is also a weak Bruhat interval module.
\end{enumerate}
While obtaining the decomposition of $\mathbf{G}_{\lambda;m}$, we introduce a method of assigning a permutation to each increasing gapless tableau.
What is fascinating is that this assignment allows us to study increasing gapless tableaux using the various properties of permutations. For more information, see \cref{Sec: Further avenues}(1).
In addition, we leave a conjecture on a representation theoretic interpretation of Pechenik's combinatorial formula for the Schur expansion of certain $U_\lambda$ (\cref{conj: repn interpretation}).

From now on, we describe our results in more detail.
For convenience, we fix a partition $\lambda$ of $n$ and a positive integer $m$ less than or equal to $n$ unless otherwise stated.

In \cref{Sec: 0-Hecke action}, we construct an $H_m(0)$-module $\bfG_{\lambda;m}$ by defining an $H_m(0)$-action on the $\C$-span of $\IGLTm{\lambda}{m}$ (\cref{thm: 0-Hecke action on IGLT}).
We then provide a direct sum decomposition of $\bfG_{\lambda;m}$ into $H_m(0)$-submodules which will turn out to be weak Bruhat interval modules.
To do this, we define an equivalence relation $\sim_{\lambda;m}$ on $\IGLTm{\lambda}{m}$ (\cref{def: equiv relation}).
Let $\calE_{\lambda;m}$ be the set of equivalence classes of $\IGLTm{\lambda}{m}$ with respect to $\sim_{\lambda;m}$.
We prove that the $\C$-span of each equivalence class $E \in \calE_{\lambda;m}$ is closed under the $H_m(0)$-action (\cref{thm: preserving}), and we thus obtain the direct sum decomposition
\[
\bfG_{\lambda;m} = \bigoplus_{E \in \calE_{\lambda;m}} \bfG_E,
\]
where $\bfG_E$ is the $H_m(0)$-submodule of $\bfG_{\lambda;m}$ whose underlying space is the $\C$-span of $E$.
Hereafter, we fix an equivalence class $E \in \calE_{\lambda;m}$.

In \cref{Sec: source and sink}, we prove the existence and uniqueness of the \emph{source tableau} and those of the \emph{sink tableau} in $E$, which play an important role in verifying that $\bfG_E$ is isomorphic to a weak Bruhat interval module.
For the precise definitions of source and sink tableaux, see \cref{def: source and sink tableaux}.
Let us briefly describe our strategy to prove the existence and uniqueness of source tableaux.
We first give a characterization for source tableaux (\cref{Lem: source and sink cond}).
Considering this characterization, we design an algorithm to construct a tableau $\source(T)$, where $T$ is an arbitrary tableau in $E$ (\cref{Alg: construct source of T}).
Then, we see that $\source(T)$ is a source tableau with $\source(T) \sim_{\lambda;m} T$, which proves the existence of source tableaux in $E$.
Next, as a key lemma to prove the uniqueness of source tableaux in $E$, we verify that $\source(T) = T$ for any source tableau $T \in E$ (\cref{lem: tS(T)}).
Finally, combining this lemma with an observation that $\source(T_1) = \source(T_2)$ for any $T_1, T_2 \in E$, we obtain our desired result (\cref{thm: uniqueness of source}).
In a similar manner, we also prove the existence and uniqueness of sink tableaux in $E$.
For details, see \cref{subsec: sink tableau}.
We denote by $T_E$ and $T'_E$ the unique source and sink tableaux, respectively.

In \cref{Sec: weak bruhat interval module}, we prove that $\bfG_E$ is isomorphic to a weak Bruhat interval module of $H_m(0)$.
To begin with, we introduce a relation $\preceq_{E}$ on $E$ defined by 
\[
T_1  \preceq_E  T_2 \quad \text{if $\pi_\sigma \cdot T_1 = T_2$ for some $\sigma \in \SG_m$}.
\]
Then, to each $T \in E$ we assign a permutation $\sfread(T) \in \SG_m$, called \emph{the standardized reading word}.
With these preparations, we prove that $T_E \preceq_E T \preceq_E T'_E$ for all $T \in E$ and $(E, \preceq_E)$ forms a poset which is isomorphic to the left weak Bruhat interval $([\sfread(T_E),\sfread(T_E')]_L, \preceq_L)$ (\cref{thm: poset isomorphism}).
Specifically, we prove that the map
$$
f: (E, \preceq_E) \ra ([\sfread(T_E),\sfread(T_E')]_L, \preceq_L), 
\quad T \mapsto \sfread(T) 
$$ 
is a poset isomorphism.
The structure of the posets $(E, \preceq_E)$ for $E \in \calE_{\lambda;m}$ enables us to show that 
\[
\ch([\bfG_{\lambda;m}]) = \GSm{\lambda}{m} 
\quad 
\text{for any $\lambda \vdash m$ and $1 \le m \le n$}
\] 
in \cref{Prop: characteristic image of G}.
Since $E$ is a basis for $\bfG_E$ and $[\sfread(T_E),\sfread(T_E')]_L$ is a basis for $\sfB(\sfread(T_E),\sfread(T_E'))$, 
it is natural to ask if the map $f: (E, \preceq_E) \ra [\sfread(T_E),\sfread(T_E')]_L$ can be lifted to an $H_{m}(0)$-module isomorphism 
$$
\widetilde{f}: \bfG_E \ra \sfB(\sfread(T_E),\sfread(T_E')),
\quad T \mapsto \sfread(T) \quad \text{for $T \in E$}.
$$
We give an affirmative answer for this question in \cref{thm: WBIM for G_E} by proving that $\widetilde{f}: \bfG_E \ra \sfB(\sfread(T_E),\sfread(T_E'))$ is an $H_m(0)$-module isomorphism.

\cref{Sec: proj cover} is devoted to finding a projective cover of $\bfG_E$. 
For a generalized composition $\bal$ of $m$, let $\bfP_{\bal}$ be the projective module whose underlying space is the $\C$-span of the set $\SRT(\bal)$ of standard ribbon tableaux of shape $\bal$.
For the precise definition of $\bfP_{\bal}$, see \cref{subsec: Proj module}.
It is well known that a surjective $H_m(0)$-module homomorphism $\psi : \bfP_{\bal} \ra \bfG_E$ is a projective cover of $\bfG_E$ if and only if $\ker(\psi) \subseteq \rad(\bfP_{\bal})$ (for instance, see \cite[Proposition 3.6]{95ARS}).
We provide a sufficient condition for $\scrT \in \SRT(\bal)$ to be contained in $\rad(\bfP_{\bal})$ (\cref{Lem: calT in Rad}).
Then, by considering the specific generalized composition $\bal_E$ defined in \cref{eq: generalized composition}, we construct a surjective $H_m(0)$-module homomorphism $\eta : \bfP_{\bal_E} \ra \bfG_E$ (\cref{lem: surj linear map}).
Finally, we prove that the set $\SRT(\bal_E) \cap \ker(\eta)$ is a basis for $\ker(\eta)$ and that every element in this basis satisfies the above sufficient condition.
Hence, we obtain $\ker(\eta) \subseteq \rad(\bfP_{\bal_E})$ as desired (\cref{thm: proj cover}).

In the last section, we discuss further avenues to pursue.

\section{Preliminaries}\label{Sec: prem}

Given any integers $m$ and $n$, define
$$
[m,n] := \begin{cases}
\{k \in \Z \mid m \le k \le n\} & \text{if $m \le n$},\\
\emptyset  & \text{otherwise.}
\end{cases}
$$ 
Throughout this section, we assume that $n$ is a nonnegative integer.

\subsection{Compositions}\label{subsec: comp and diag}

A \emph{composition} $\alpha$ of $n$, denoted by $\alpha \models n$, is a finite ordered list of positive integers $(\alpha_1, \alpha_2, \ldots, \alpha_k)$ satisfying $\sum_{i=1}^k \alpha_i = n$.
We call $k =: \ell(\alpha)$ the \emph{length} of $\alpha$ and $n =:|\alpha|$ the \emph{size} of $\alpha$. For convenience we define the empty composition $\emptyset$ to be the unique composition of size and length $0$.
A \emph{generalized composition} $\bal$ of $n$ is a formal expression $\alpha^{(1)} \RS \alpha^{(2)} \RS \cdots \RS  \alpha^{(k)}$, where $\alpha^{(i)} \models n_i$ for positive integers $n_i$'s with $n_1 + n_2 + \cdots + n_k = n$.
If $\lambda = (\lambda_1,\lambda_2,\ldots, \lambda_{\ell(\alpha)}) \models n$ satisfies that $\lambda_1 \ge \lambda_2 \ge \cdots \ge \lambda_{\ell(\alpha)}$, then we say that $\lambda$ is a \emph{partition} of $n$ and denote it by $\lambda \vdash n$.

Given $\alpha = (\alpha_1, \alpha_2, \ldots,\alpha_{\ell(\alpha)}) \models n$ and $I = \{i_1 < i_2 < \cdots < i_l\} \subset [1, n-1]$, 
let 
\begin{align*}
&\set(\alpha) := \{\alpha_1,\alpha_1+\alpha_2,\ldots, \alpha_1 + \alpha_2 + \cdots + \alpha_{\ell(\alpha)-1}\}, \\
&\comp(I) := (i_1,i_2 - i_1,\ldots,n-i_l).
\end{align*}
The set of compositions of $n$ is in bijection with the set of subsets of $[1, n-1]$ under the correspondence $\alpha \mapsto \set(\alpha)$ (or $I \mapsto \comp(I)$).
Let $\alpha^\rmc$ be the unique composition satisfying that $\set(\alpha^c) = [1, n-1] \setminus \set(\alpha)$.
For a generalized composition 
$\bal = \alpha^{(1)} \RS \alpha^{(2)} \RS \cdots \RS  \alpha^{(k)}$, let $\bal^\rmc := (\alpha^{(1)})^\rmc \RS (\alpha^{(2)})^\rmc \RS \cdots \RS (\alpha^{(k)})^\rmc$.

For compositions $\alpha = (\alpha_{1}, \alpha_{2}, \ldots, \alpha_{\ell(\alpha)})$ and $\beta= (\beta_{1}, \beta_{2}, \ldots, \beta_{\ell(\beta)})$,
let $\alpha \cdot \beta$ be the \emph{concatenation}
and $\alpha \odot \beta$ the \emph{near concatenation} of $\alpha$ and $\beta$.
In other words,
$ \alpha \cdot \beta = (\alpha_1, \alpha_2, \ldots, \alpha_{\ell(\alpha)}, \beta_1, \beta_2, \ldots, \beta_{\ell(\beta)})$ and 
$\alpha \odot \beta = (\alpha_1,\alpha_2,\ldots, \alpha_{k-1},\alpha_{\ell(\alpha)} + 
\beta_1,\beta_2, \ldots, \beta_{\ell(\beta)})$.
For a generalized composition 
$\bal = \alpha^{(1)} \RS \alpha^{(2)} \RS \cdots \RS  \alpha^{(k)}$,
we define
\begin{align*}
[\bal] & := \{
\alpha^{(1)} \  \square \  \alpha^{(2)} \ \square \ \cdots \  \square  \ \alpha^{(k)}
\mid
\square = \text{$\cdot$ or $\odot$}
\}.
\end{align*}
We also define
\begin{align}\label{eq: bal bullet and odot}
\bal_\bullet := \alpha^{(1)} \cdot \alpha^{(2)} \cdot \  \cdots \  \cdot  \alpha^{(k)}, \quad
\bal_\odot := \alpha^{(1)} \odot \alpha^{(2)} \odot \cdots \odot  \alpha^{(k)},
\end{align}
and
\begin{align}\label{eq: bal odot beta}
\bal \cdot \beta := \alpha^{(1)} \RS \alpha^{(2)} \RS \cdots \RS (\alpha^{(k)} \cdot \beta).
\end{align}

\subsection{Diagrams}

For $\alpha = (\alpha_1, \alpha_2, \ldots, \alpha_{\ell(\alpha)}) \models n$, we define the \emph{ribbon diagram $\trd(\alpha)$ of $\alpha$} by the connected skew diagram without $2\times 2$ boxes, such that the $i$th column from the left has $\alpha_i$ boxes. 
For a generalized composition $\bal = \alpha^{(1)} \RS \alpha^{(2)} \RS \cdots \RS \alpha^{(k)}$ of $n$, we define the {\em generalized ribbon diagram $\trd(\bal)$ of $\bal$} to be the skew diagram whose connected components are $\trd(\alpha^{(1)}), \trd(\alpha^{(2)}), \ldots, \trd(\alpha^{(k)})$ such that $\trd(\alpha^{(i+1)})$ is strictly to the northeast of $\trd(\alpha^{(i)})$ for $i = 1, 2, \ldots, k-1$.
For example, if $\bal = (2,1) \RS (1,1)$, then
\[
\trd(\bal) = \begin{array}{c}
\begin{ytableau}
\none & \none & ~ & ~ \\
~ & ~ \\
~
\end{ytableau}
\end{array}.
\]
A filling of $\trd(\bal)$ is a function $\scrT: \trd(\bal) \ra \Z_{>0}$.

For $\lambda = (\lambda_1, \lambda_2, \ldots, \lambda_{\ell(\lambda)}) \vdash n$, 
we define the \emph{Young diagram} $\tyd(\lambda)$ of $\lambda$ by a left-justified array of $n$ boxes where the $i$th row from the top has $\lambda_i$ boxes for $1 \le i \le \ell(\lambda)$.
We say that a box in $\tyd(\lambda)$ is \emph{in the $i$th row} if it is in the $i$th row from the top and \emph{in the $j$th column} if it is in the $j$th column from the left.
We denote by $(i,j)$ the box in the $i$th row and $j$th column.
For any box $(i,j)$, let $\row((i,j)) = i$ and $\col((i,j)) = j$.
Denoting $(i,j) \in \tyd(\lambda)$ means that $1 \le i \le \ell(\lambda)$ and $1 \le j \le \lambda_i$.
We also say that a lattice point on $\tyd(\lambda)$ is \emph{in the $i$th row} if it is in the $(i+1)$st horizontal line from the top and \emph{in the $j$th column} if it is in the $(j+1)$st vertical line from the left.
We denote by $\grco{i}{j}$ the lattice point in the $i$th row and $j$th column.
For example, if $\lambda = (3,2,2)$, then 
\begin{displaymath}
\def \hp {0.65}
\def \vp {0.65}
\begin{tikzpicture}
\node[] at (-1, 0.95) {$\tyd(\lambda) =$};
\draw (0*\hp,0*\vp) -- (2*\hp,0*\vp) -- (2*\hp,2*\vp) -- (3*\hp,2*\vp) -- (3*\hp,3*\vp) -- (0*\hp,3*\vp) -- (0*\hp,0*\vp);
\draw (0*\hp,2*\vp) -- (2*\hp,2*\vp);
\draw (0*\hp,1*\vp) -- (2*\hp,1*\vp);
\draw (1*\hp,0*\vp) -- (1*\hp,3*\vp);
\draw (2*\hp,2*\vp) -- (2*\hp,3*\vp);

\draw[fill = red!50] (2*\hp,2*\vp) -- (2*\hp,3*\vp) -- (3*\hp,3*\vp) -- (3*\hp,2*\vp) -- (2*\hp,2*\vp);
\node at (2.5*\hp,2.5*\vp) {\tiny $(1,3)$};

\node[color = blue] at (0,0) {$\bullet$}; 
\node at (0.5*\hp,-0.4*\vp) {\tiny $\grco{3}{0}$};
\node[] at (2.2, 0.75) {,};
\end{tikzpicture}
\end{displaymath}
the box $(1,3)$ is the box filled with red, and 
the lattice point $\grco{3}{0}$ is the point marked by the blue dot.
A filling of $\tyd(\lambda)$ is a function $T: \tyd(\lambda) \ra \Z_{>0}$. 
Throughout this paper, we assume that
\[
\begin{array}{ll}
T((i,j)) = \infty & \text{if $(i,j) \in (\Z_{>0} \times \Z_{>0}) \setminus \tyd(\lambda)$}
\quad \text{and} \\[0.5ex]
T((i,j)) = -\infty & \text{if $(i,j) \in (\Z \times \Z) \setminus (\Z_{>0} \times \Z_{>0})$.}
\end{array}
\]
For any filling $T$ of $\tyd(\lambda)$,
let 
$$
\max (T) := \max \{T((i,j)) \mid (i,j) \in \tyd(\lambda)\}.
$$

\subsection{The $0$-Hecke algebra and the quasisymmetric characteristic}\label{subsec: 0-Hecke alg}
To begin with, we recall that the symmetric group $\SG_n$ is generated by simple transpositions $s_i := (i,i+1)$ with $1 \le i \le n-1$.
An expression for $\sigma \in \SG_n$ of the form $s_{i_1} s_{i_2} \cdots s_{i_p}$ that uses the minimal number of simple transpositions is called a \emph{reduced expression} for $\sigma$. 
The number of simple transpositions in any reduced expression for $\sigma$, denoted by $\ell(\sigma)$, is called the \emph{length} of $\sigma$.
Let $w_0$ be the longest element in $\SG_n$, and $w_0(\alpha)$ the longest element in the parabolic subgroup of $\SG_n$ generated by $\{s_i \mid i \in \set(\alpha) \}$ for $\alpha \models n$.

The $0$-Hecke algebra $H_n(0)$ is the $\C$-algebra generated by $\pi_1, \pi_2, \ldots,\pi_{n-1}$ subject to the following relations:
\begin{align*}
\pi_i^2 &= \pi_i \quad \text{for $1\le i \le n-1$},\\
\pi_i \pi_{i+1} \pi_i &= \pi_{i+1} \pi_i \pi_{i+1}  \quad \text{for $1\le i \le n-2$},\\
\pi_i \pi_j &=\pi_j \pi_i \quad \text{if $|i-j| \ge 2$}.
\end{align*}
For each $1 \le i \le n-1$, let $\opi_i := \pi_i -1$.

Consider any reduced expression $s_{i_1} s_{i_2} \cdots s_{i_p}$ for a permutation $\sigma \in \SG_n$. 
We define the elements $\pi_{\sigma}$ and $\opi_{\sigma}$ of $H_n(0)$ by
\[
\pi_{\sigma} := \pi_{i_1} \pi_{i_2} \cdots \pi_{i_p} \quad \text{and} \quad \opi_{\sigma} := \opi_{i_1} \opi_{i_2} \cdots \opi_{i_p}.
\]
It is well known that these elements are independent of the choice of reduced expressions, and both $\{\pi_\sigma \mid \sigma \in \SG_n\}$ and $\{\opi_\sigma \mid \sigma \in \SG_n\}$ are bases for $H_n(0)$.

In \cite{79Norton}, Norton classified all irreducible modules and projective indecomposable modules of the $0$-Hecke algebras. 
It was shown that there are $2^{n-1}$ distinct irreducible $H_n(0)$-modules and $2^{n-1}$ distinct projective indecomposable $H_n(0)$-modules, which are naturally parametrized by compositions of $n$.
For each $\alpha \models n$,
the irreducible module $\bfF_\alpha$ corresponding to $\alpha$ is the $1$-dimensional $H_n(0)$-module spanned by a vector $v_\alpha$ whose $H_n(0)$-action is given by
\[
\pi_i \cdot v_\alpha = \begin{cases}
0 & i \in \set(\alpha),\\
v_\alpha & i \notin \set(\alpha),
\end{cases}
\qquad (1\le i \le n-1).
\]
Also, the projective indecomposable module corresponding to $\alpha$ is the submodule $\calP_\alpha := H_n(0) \pi_{w_0(\alpha^\rmc)} \opi_{w_0(\alpha)}$ of the regular representation of $H_n(0)$.
It is known that $\calP_\alpha / \rad \; \calP_\alpha \cong \bfF_\alpha$ for all $\alpha \models n$, where $\rad \;  \calP_\alpha$ is the radical of $\calP_\alpha$.
For instance, see \cite{16Huang, 79Norton}.

Let $\calR(H_n(0))$ denote the $\Z$-span of the isomorphism classes of finite dimensional representations of $H_n(0)$. The isomorphism class corresponding to an $H_n(0)$-module $M$ will be denoted by $[M]$. The \emph{Grothendieck group} $\calG_0(H_n(0))$ is the quotient of $\calR(H_n(0))$ modulo the relations $[M] = [M'] + [M'']$ whenever there exists a short exact sequence $0 \ra M' \ra M \ra M'' \ra 0$. 
The set $\{[\bfF_\alpha] \mid \alpha \models n \}$ is a free $\Z$-basis for $\calG_0(H_n(0))$. 
Let
\[
\calG := \bigoplus_{n \ge 0} \calG_0(H_n(0)).
\]
be the ring equipped with the induction product.

Let us review the connection between $\calG$ and quasisymmetric functions.
Quasisymmetric functions are power series of bounded degree in variables $x_{1},x_{2},x_{3},\ldots$  with coefficients in $\Z$, which are shift invariant in the following sense: The coefficient of the monomial $x_{1}^{\alpha _{1}}x_{2}^{\alpha _{2}}\cdots x_{k}^{\alpha _{k}}$ is equal to the coefficient of the monomial $x_{i_{1}}^{\alpha _{1}}x_{i_{2}}^{\alpha _{2}}\cdots x_{i_{k}}^{\alpha _{k}}$ for any strictly increasing sequence of positive integers $i_{1}<i_{2}<\cdots <i_{k}$ indexing the variables and any positive integer sequence $(\alpha _{1},\alpha _{2},\ldots ,\alpha _{k})$ of exponents.

Given a composition $\alpha$, the \emph{fundamental quasisymmetric function} $F_\alpha$ is defined by $F_\emptyset = 1$ and
\begin{align*}
F_\alpha = \sum_{\substack{1 \le i_1 \le i_2 \le \cdots \le i_k \\ i_j < i_{j+1} \text{ if } j \in \set(\alpha)}} x_{i_1} x_{i_2} \cdots x_{i_k}.
\end{align*}
It is well known that $\{F_\alpha \mid \text{$\alpha$ is a composition}\}$ is a basis for the ring $\QSym$ of quasisymmetric functions.
For instance, see \cite{83Gessel} and \cite[Proposition 7.19.1]{99Stanley}.
In~\cite{96DKLT}, Duchamp, Krob, Leclerc, and Thibon showed that 
\begin{align*}
\ch : \calG \ra \QSym, \quad [\bfF_{\alpha}] \mapsto F_{\alpha},
\end{align*}
called \emph{quasisymmetric characteristic}, is a ring isomorphism.

\subsection{Projective modules of the $0$-Hecke algebra}
\label{subsec: Proj module}

In~\cite{16Huang}, Huang provided a combinatorial description of projective indecomposable modules and their induction products by using standard ribbon tableaux of generalized composition shape.
Here, we introduce the description briefly.

\begin{definition}\label{def: SRT}
For a generalized composition $\bal$ of $n$, a \emph{standard ribbon tableau} (SRT) of shape $\bal$ is a filling of $\trd(\bal)$ with $\{1,2,\ldots,n\}$ such that 
the entries are all distinct,
the entries in each row are increasing from left to right, and 
the entries in each column are increasing from top to bottom.
\end{definition}

We denote by $\SRT(\bal)$ the set of all $\SRT$s of shape $\bal$. 
Define an $H_n(0)$-action on the $\C$-span of $\SRT(\bal)$ by 
\begin{align}\label{eq: action on bfP}
\begin{aligned}
\pi_i \cdot \scrT = \begin{cases}
\scrT & \text{if $i$ appears strictly above $i+1$ in $\scrT$},\\
0 & \text{if $i$ and $i+1$ are in the same row of $\scrT$},\\
s_i \cdot \scrT & \text{if $i$ appears strictly below $i+1$ in $\scrT$}
\end{cases}
\end{aligned}
\end{align}
for $1\le i \le n-1$ and $\scrT \in \SRT(\bal)$.
Here, $s_i \cdot \scrT$ is obtained from $\scrT$ by swapping $i$ and $i+1$. 
Let $\bfP_\bal$ be the resulting module.

\begin{theorem}{\rm (\cite[Theorem 3.3]{16Huang})}
\label{thm: bfP isom to calP}
The following hold.
\begin{enumerate}[label = {\rm (\arabic*)}]
\item For any $\alpha \models n$, $\bfP_\alpha \cong \calP_\alpha$ as $H_n(0)$-modules.
\item Let $\bal = \alpha^{(1)} \RS \alpha^{(2)} \RS \cdots \RS \alpha^{(k)}$ be a generalized composition of $n$ such that $\alpha^{(i)} \models n_i$ for each $1 \le i \le k$.
Then,
\[
\bfP_\bal \cong (\bfP_{\alpha^{(1)}} \otimes \bfP_{\alpha^{(2)}} \otimes \cdots \otimes \bfP_{\alpha^{(k)}})\uparrow^{H_n(0)}_{H_{n_1}(0) \otimes H_{n_2}(0) \otimes \cdots \otimes H_{n_k}(0)}
\cong \bigoplus_{\beta \in [\bal]} \bfP_\beta 
\]
as $H_n(0)$-modules.
\end{enumerate}
\end{theorem}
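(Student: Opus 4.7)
The plan is to handle part (1) by producing an explicit cyclic generator of $\bfP_\alpha$ matching the defining generator of $\calP_\alpha$, and part (2) by a basis-bijection argument for the first isomorphism followed by an induction on $k$ using the classical two-factor decomposition of induced projectives for the second isomorphism.

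For part (1), I first observe that $\dim \bfP_\alpha = |\SRT(\alpha)|$ equals the number of permutations in $\SG_n$ with descent composition $\alpha$, which is the standard formula for $\dim \calP_\alpha$; hence it suffices to produce a surjective $H_n(0)$-linear map $\calP_\alpha \to \bfP_\alpha$. I would single out a distinguished tableau $\scrT_\alpha \in \SRT(\alpha)$---the one in which, reading the ribbon along its maximal vertical strips, the entries form consecutive runs of sizes $\alpha_1, \alpha_2, \ldots$---and verify directly from the three-case definition of the action that $\pi_{w_0(\alpha^\rmc)} \cdot \scrT_\alpha = \scrT_\alpha$ while $\opi_{w_0(\alpha)} \cdot \scrT_\alpha$ is a nonzero element of $\bfP_\alpha$. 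This furnishes a well-defined $H_n(0)$-linear map $\calP_\alpha \to \bfP_\alpha$ sending the generator $\pi_{w_0(\alpha^\rmc)} \opi_{w_0(\alpha)}$ to $\opi_{w_0(\alpha)} \cdot \scrT_\alpha$; a descending swap argument shows every $\scrT \in \SRT(\alpha)$ is an $H_n(0)$-translate of this image, giving surjectivity and hence, by the dimension match, an isomorphism.

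For the first isomorphism in part (2), I would set up an explicit basis bijection. Each $\scrT \in \SRT(\bal)$ is determined by (a) a set partition $[1,n] = S_1 \sqcup \cdots \sqcup S_k$ with $|S_i| = n_i$ recording which entries occupy $\trd(\alpha^{(i)})$, and (b) the tableaux $\scrT_i \in \SRT(\alpha^{(i)})$ obtained from the $S_i$-portion by order-preserving standardization. Such data $(S_1,\ldots,S_k;\scrT_1,\ldots,\scrT_k)$ is precisely a standard basis element of the induced module $(\bfP_{\alpha^{(1)}} \otimes \cdots \otimes \bfP_{\alpha^{(k)}}) \uparrow^{H_n(0)}_{H_{n_1}(0) \otimes \cdots \otimes H_{n_k}(0)}$, indexed by a minimum-length coset representative $\sigma \in \SG_n / (\SG_{n_1} \times \cdots \times \SG_{n_k})$ tensored with an element of the internal basis. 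The equivariance check reduces to a case analysis on the positions of $i$ and $i+1$: the same-component case matches the internal $\bfP_{\alpha^{(j)}}$-action, while the cross-component case exploits the convention that $\trd(\alpha^{(j+1)})$ sits strictly northeast of $\trd(\alpha^{(j)})$, forcing $i$ strictly above $i+1$ and hence $\pi_i \cdot \scrT = \scrT$, in agreement with the induced action.

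For the second isomorphism, I would induct on $k$. The base case $k=2$ is the Krob--Thibon decomposition
\[
\calP_\alpha \otimes \calP_\beta \uparrow^{H_{n_1+n_2}(0)}_{H_{n_1}(0) \otimes H_{n_2}(0)} \;\cong\; \calP_{\alpha \cdot \beta} \oplus \calP_{\alpha \odot \beta},
\]
which, combined with part (1), handles $[\alpha \oplus \beta] = \{\alpha \cdot \beta, \alpha \odot \beta\}$. The inductive step uses transitivity of induction together with the disjoint-union identity
\[
[\alpha^{(1)} \oplus \cdots \oplus \alpha^{(k)}] \;=\; [\alpha^{(1)} \oplus \cdots \oplus \alpha^{(k-1)}] \cdot \alpha^{(k)} \;\sqcup\; [\alpha^{(1)} \oplus \cdots \oplus \alpha^{(k-1)}] \odot \alpha^{(k)}
\]
obtained by splitting on the final connective. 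The main obstacle is the equivariance check for the first isomorphism: the within-component action is essentially tautological, but matching the cross-component action with the $H_n(0)$-action on the coset-representative component of the induced module requires careful Mackey-style bookkeeping, since a swap $s_i$ acting on a coset representative can force a change in how the entries $S_i$ are indexed within each component.
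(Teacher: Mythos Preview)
The paper does not provide its own proof of this statement: it is quoted as a preliminary result from Huang's paper \cite[Theorem~3.3]{16Huang} and is used as a black box throughout. So there is no in-paper argument to compare against.

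That said, your outline is essentially the standard route and would work with care. One point to tighten in part~(1): the map $\calP_\alpha \to \bfP_\alpha$ you want is the restriction to $\calP_\alpha \subseteq H_n(0)$ of $h \mapsto h \cdot \scrT_\alpha$, so the generator $g = \pi_{w_0(\alpha^\rmc)}\opi_{w_0(\alpha)}$ is sent to $g \cdot \scrT_\alpha$, not to $\opi_{w_0(\alpha)} \cdot \scrT_\alpha$; your observation $\pi_{w_0(\alpha^\rmc)} \cdot \scrT_\alpha = \scrT_\alpha$ does not by itself let you drop the left factor, since $\pi_{w_0(\alpha^\rmc)}$ acts after $\opi_{w_0(\alpha)}$. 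What actually makes this work is that for $i \notin \set(\alpha)$ one has $\pi_i \cdot \scrT_\alpha = \scrT_\alpha$ (so $\opi_i \cdot \scrT_\alpha = 0$), which identifies $\scrT_\alpha$ as a vector on which the annihilator of $g$ in $H_n(0)$ already acts correctly; the nonvanishing of $g \cdot \scrT_\alpha$ and the cyclicity of its image then need a short separate check. For part~(2), your basis-bijection for the first isomorphism and the $k=2$ Krob--Thibon step plus transitivity of induction for the second are exactly the expected arguments; the Mackey-style bookkeeping you flag is real but routine once one tracks the coset representative carefully.
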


For a generalized composition $\bal$, let $\scrT_\bal \in \SRT(\bal)$ be the standard ribbon tableau obtained by filling $\trd(\bal)$ with entries $1,2,\ldots, n$ from top to bottom starting from the left.
Then $\bfP_\bal$ is cyclically generated by $\scrT_\bal$.

\subsection{Weak Bruhat interval modules of the $0$-Hecke algebra}\label{subsec: WBIM}

Given $\sigma \in \SG_n$ and $i \in [1,n-1]$, $i$ is called a \emph{left descent of $\sigma$} if $\ell(s_i \sigma) < \ell(\sigma)$.
Let $\Des_L(\sigma)$ be the set of all left descents of $\sigma$.
The \emph{left weak Bruhat order $\preceq_L$} on $\SG_n$ is the partial order on $\SG_n$ whose covering relation $\preceq_L^c$ is defined as follows: $\sigma \preceq_L^c s_i \sigma$ if and only if $i \notin \Des_L(\sigma)$.
Given $\sigma, \rho \in \SG_n$, the closed interval $\{\gamma \in \SG_n \mid \sigma \preceq_L \gamma \preceq_L \rho\}$ is called the \emph{left weak Bruhat interval from $\sigma$ to $\rho$} and denoted by $[\sigma,\rho]_L$.

\begin{definition}{\rm (\cite{22JKLO})}\label{def: WBIM}
Let $\sigma, \rho \in \SG_n$.
The \emph{weak Bruhat interval module associated to $[\sigma,\rho]_L$}, denoted by $\sfB(\sigma,\rho)$, is the $H_n(0)$-module with the underlying space $\C[\sigma,\rho]_L$ and with the $H_n(0)$-action defined by
\begin{align*}
\pi_i \cdot \gamma := \begin{cases}
\gamma & \text{if $i \in \Des_L(\gamma)$}, \\
0 & \text{if $i \notin \Des_L(\gamma)$ and $s_i\gamma \notin [\sigma,\rho]_L$,} \\
s_i \gamma & \text{if $i \notin \Des_L(\gamma)$ and $s_i\gamma \in [\sigma,\rho]_L$}.
\end{cases} 
\end{align*}
\end{definition}

In~\cite{22JKLO}, it was shown that the family of weak Bruhat interval modules contains various $H_n(0)$-modules up to isomorphism. 
In particular, the family contains all projective indecomposable modules and their induction products.
Precisely, given $\alpha \models n$ and generalized composition $\bal = \alpha^{(1)} \RS \alpha^{(2)} \RS \cdots \RS  \alpha^{(k)}$ of $n$,
\begin{align*}
\bfP_\alpha \cong \sfB(w_0(\alpha^\rmc), w_0 w_0(\alpha))
\quad \text{and} \quad
\bfP_\bal \cong \sfB(w_0(\bal_\bullet^\rmc), w_0 w_0(\bal_\odot)).
\end{align*}
For the definitions of $\bal_\bullet$ and $\bal_\odot$, see \cref{eq: bal bullet and odot}.
For more information on weak Bruhat interval modules, see~\cite{22JKLO}.

\begin{remark}
Duchamp, Hivert, and Thibon \cite{02DHT} constructed an $H_n(0)$-module arising from a poset on $[n]$.
One may ask if there is a relationship between the $H_n(0)$-modules arising from posets and weak Bruhat interval modules. 
Very recently, Choi, Kim, and Oh \cite{23CKO} addressed this question, proving that every weak Bruhat interval module can be recovered as an $H_n(0)$-module arising from a regular poset.
\end{remark}

For later use, we state the following useful property that can be easily proved:
For any $\sigma, \rho \in \SG_n$ and $\rho' \in [\sigma,\rho]_L$, the linear map 
\begin{align}\label{eq: definition of pr}
\pr : \sfB(\sigma,\rho) \ra \sfB(\sigma,\rho'), 
\quad
\gamma \mapsto \begin{cases}
\gamma & \text{if $\gamma \in [\sigma,\rho']_L$,} \\
0 & \text{otherwise}
\end{cases}
\end{align}
is a surjective $H_n(0)$-module homomorphism.

\subsection{Genomic Schur functions}
\label{subsec: Genomic Schur}

In \cite{17PY2}, Pechenik and Yong introduced the genomic Schur function as a generating function for genomic tableaux to develop the combinatorial theory of genomic tableaux.
Recently, Pechenik \cite{20Pechenik} provided another way to define genomic Schur functions by using increasing gapless tableaux.

\begin{definition}
Given $\lambda \vdash n$, an \emph{increasing gapless tableau} of shape $\lambda$ is a filling of $\tyd(\lambda)$ such that
\begin{enumerate}[label = {\rm (\arabic*)}]
\item the entries in each row strictly increase from left to right,
\item the entries in each column strictly increase from top to bottom, and
\item the set $T^{-1}(k)$ is nonempty for all $1 \le k \le \max (T)$.
\end{enumerate}
\end{definition}
Let $\IGLT(\lambda)$ be the set of all increasing gapless tableaux of shape $\lambda$.
Given $T \in \IGLT(\lambda)$ and $i \in [1, \max (T)]$, let $\Top_i(T)$ (resp. $\Bot_i(T)$) be the unique box $B \in T^{-1}(i)$ such that $\row(B)$ is minimal (resp. maximal) among $\row(B')$'s for $B' \in T^{-1}(i)$.
In other words, $\mathsf{Top}_i(T)$ (resp. $\mathsf{Bot}_i(T)$) is the highest (resp. lowest) box in $T$ having entry $i$.
Also, let
\begin{align}\label{eq: rb cb rt ct}
(r_\sfb^{(i)}(T), c_\sfb^{(i)}(T)) := \Bot_i(T)
\quad \text{and} \quad 
(r_\sft^{(i)}(T), c_\sft^{(i)}(T)) := \Top_i(T).
\end{align}
If $T$ is clear in the context, we simply write $r_\sfb^{(i)}, c_\sfb^{(i)}, r_\sft^{(i)}$, and $c_\sft^{(i)}$ instead of $r_\sfb^{(i)}(T)$, $c_\sfb^{(i)}(T)$, $r_\sft^{(i)}(T)$, and $c_\sft^{(i)}(T)$, respectively.
We call $i$ a \emph{descent} of $T$ if $r_\sft^{(i)} < r_\sfb^{(i+1)}$, or equivalently, there is some instance of $i$ strictly above some instance of $i+1$ in $T$.
Denote by $\Des(T)$ the set of all descents of $T$ and set $\comp(T) := \comp(\Des(T))$.
Given $1 \le m \le n$, we define  
$$
\IGLTm{\lambda}{m} := \{T \in \IGLT(\lambda) \mid \max (T) = m\}.
$$

\begin{definition}{\rm (\cite{20Pechenik, 17PY2})}
For $\lambda \vdash n$, the \emph{genomic Schur function} $\GS{\lambda}$ is defined by
\[
\GS{\lambda} := \sum_{1 \le m \le n} \left( \sum_{T \in \IGLT(\lambda)_m} F_{\comp(T)} \right).
\]
\end{definition}

For $1 \le m \le n$, let $\GSm{\lambda}{m} := \sum_{T \in \IGLTm{\lambda}{m}} F_{\comp(T)}$.
From the definition, it immediately follows that $\GSm{\lambda}{m}$ is the $m$th degree homogeneous component of $\GS{\lambda}$.

\begin{example}
Note that
\[
\IGLT( (2,2) ) = \left\{
\begin{array}{l}
\begin{ytableau}
1 & 2 \\
2 & 3
\end{ytableau},~
\begin{ytableau}
1 & 2 \\
3 & 4
\end{ytableau},~
\begin{ytableau}
1 & 3 \\
2 & 4
\end{ytableau}
\end{array}
\right\}.
\]
One can see that
\[
\comp\left( 
\begin{array}{l}
\begin{ytableau}
1 & 2 \\
2 & 3
\end{ytableau}
\end{array}
\right)
= (1,1,1),
\quad
\comp\left( 
\begin{array}{l}
\begin{ytableau}
1 & 2 \\
3 & 4
\end{ytableau}
\end{array}
\right)
= (2,2),
\quad
\comp\left( 
\begin{array}{l}
\begin{ytableau}
1 & 3 \\
2 & 4
\end{ytableau}
\end{array}
\right)
= (1,2,1).
\]
Thus, 
$$
\GSm{(2,2)}{3} = F_{(1,1,1)}, \quad \GSm{(2,2)}{4} = F_{(2,2)} + F_{(1,2,1)}, \quad \text{and} \quad \GS{(2,2)} = F_{(1,1,1)} + F_{(2,2)} + F_{(1,2,1)}.
$$
\end{example}

Hereafter, we assume that $n$ is a positive integer, $m$ is a positive integer less than or equal to $n$, and $\lambda$ is a partition of $n$, unless otherwise stated.

\section{$0$-Hecke modules arising from increasing gapless tableaux}\label{Sec: 0-Hecke action}

In this section, we introduce an $H_m(0)$-module $\bfG_{\lambda;m}$ by defining an $H_n(0)$-action on the $\C$-span of $\IGLTm{\lambda}{m}$. 
Then, we decompose $\bfG_{\lambda;m}$ into a direct sum of $H_m(0)$-submodules which will turn out to be weak Bruhat interval modules in \cref{Sec: weak bruhat interval module}.

\subsection{An $H_{m}(0)$-action on $\C\IGLTm{\lambda}{m}$}\label{Subsec: 0-Hecke modules for genomic Shcur functions}

We start by introducing the necessary definitions.

\begin{definition}\label{def: attacking descent}
Given $T \in \IGLT(\lambda)$ and $1 \le i \le \max (T) -1$, we say that $i$ is an \emph{attacking descent} if $i \in \Des(T)$, and either
\begin{enumerate}[label = \textrm{(\alph*)}]
\item
there exists $(j,k) \in \tyd(\lambda)$ such that $T( (j,k) ) = i$ and $T( (j +1, k) ) = i + 1$, or

\item 
there exists a box $B \in T^{-1}(i+1)$ placed weakly above $\Bot_i(T)$.
\end{enumerate}
\end{definition}
We notice that if $i$ is a non-attacking descent of $T$, then all $(i+1)$'s lie strictly below and strictly left of $\Bot_i(T)$.

Take any $1 \le m \le n$.
For each $1 \le i \le m-1$, define a linear operator $\bfpi_i: \C \, \IGLTm{\lambda}{m} \ra \C \, \IGLTm{\lambda}{m}$ by letting
\begin{align*}
\bfpi_i (T) := 
\begin{cases}
T & \text{if $i$ is not a descent of $T$,}\\
0 & \text{if $i$ is an attacking descent of $T$,}\\ 
s_i \cdot T & \text{if $i$ is a non-attacking descent of $T$}
\end{cases}
\end{align*}
for $T \in \IGLTm{\lambda}{m}$ and extending it by linearity.
Here, $s_i \cdot T$ is the tableau obtained from $T$ by replacing $i$ and $i+1$ with $i+1$ and $i$, respectively.

The following is the main theorem of this subsection.

\begin{theorem}\label{thm: 0-Hecke action on IGLT}
For any $1 \le m \le n$,
the operators $\bfpi_1, \bfpi_2,\ldots, \bfpi_{m-1}$ satisfy the same relations as the generators $\pi_1, \pi_2, \ldots, \pi_{m-1}$ for $H_m(0)$.
In other words, $\bfpi_1, \bfpi_2,\ldots, \bfpi_{m-1}$ define an $H_m(0)$-action on $\C \, \IGLTm{\lambda}{m}$.
\end{theorem}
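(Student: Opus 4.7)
The plan is to verify directly on each basis tableau $T \in \IGLTm{\lambda}{m}$ the three defining relations of $H_m(0)$ — idempotency $\bfpi_i^2 = \bfpi_i$, commutation $\bfpi_i \bfpi_j = \bfpi_j \bfpi_i$ for $|i-j| \ge 2$, and the braid relation $\bfpi_i \bfpi_{i+1} \bfpi_i = \bfpi_{i+1} \bfpi_i \bfpi_{i+1}$.

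For idempotency, only the case where $i$ is a non-attacking descent of $T$ requires real work. Writing $T' := s_i \cdot T$, I would first check that $T' \in \IGLTm{\lambda}{m}$: the non-attacking hypothesis, together with the row/column strict-increase and the gapless property, rules out any two entries labeled $i$ and $i+1$ sharing a row or column of $T$ (a shared column would force adjacency by gaplessness, triggering condition (a) of \cref{def: attacking descent}; a shared row would force some $i+1$ to sit weakly above $\Bot_i(T)$, triggering (b)). Consequently the swap preserves both strict increase and gaplessness. Second, since every box labeled $i+1$ in $T$ lies strictly below $\Bot_i(T)$, in $T'$ we have $\Top_i(T') = \Top_{i+1}(T)$ sitting strictly below $\Bot_{i+1}(T') = \Bot_i(T)$, so $i \notin \Des(T')$. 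Hence $\bfpi_i^2 T = \bfpi_i T' = T' = \bfpi_i T$. Commutation for $|i-j|\ge 2$ is then routine: the action of $\bfpi_i$ and the tests defining its attacking/non-attacking status depend only on the positions of the entries $i, i+1$, while $\bfpi_j$ only depends on those of $j, j+1$; these disjoint pairs are never intermixed.

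The substantive step is the braid relation. I plan to case-split on the status of $i$ and $i+1$ as descents of $T$, each being one of three types (non-descent, attacking descent, non-attacking descent), giving nine top-level cases with a few further subcases. In cases where $\bfpi_i T = 0$ or $\bfpi_{i+1} T = 0$ at the outset, a short argument shows that both sides of the braid identity vanish — one must check that applying the other operator first cannot destroy the attacking configuration that caused the zero. In the remaining cases, where some intermediate swap produces a valid tableau, the task is to track the evolving descent/attacking status of $i$ and $i+1$ after each application of $\bfpi_i$ or $\bfpi_{i+1}$, and verify that both words in the braid relation converge to the same output (either a common cyclic rotation of the labels $\{i, i+1, i+2\}$ along a short strip, or $0$ on both sides). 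I expect the main obstacle to be organizing this case analysis compactly; the underlying combinatorics reduces to the relative placement of the boxes labeled $i, i+1, i+2$, which the strict-increase and gapless conditions confine to a short list of configurations (classified by how many rows and columns they occupy and which pairs share a row or column) that can be dispatched by direct inspection, in the same spirit as similar $0$-Hecke actions on tableau bases treated in \cite{15TW, 19Searles, 20CKNO}.
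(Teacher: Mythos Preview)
Your proposal is correct and follows essentially the same approach as the paper: the paper proves the theorem via three lemmas establishing idempotency, commutation, and the braid relation, each by the same kind of case analysis on the descent/attacking status of $i$ (and $i+1$) that you outline. One minor slip: in your idempotency argument, the fact that an $i$ and $i+1$ sharing a column must be adjacent follows from strict column-increase (not gaplessness), but the conclusion is unaffected.
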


In order to prove this theorem, let us establish some necessary lemmas.

\begin{lemma}\label{lem: idempotent relations of bfpi}
For $1 \le i \le m-1$, $\bfpi_i^2 = \bfpi_i$. 
\end{lemma}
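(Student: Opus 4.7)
The plan is to verify $\bfpi_i^2(T) = \bfpi_i(T)$ separately on each $T \in \IGLTm{\lambda}{m}$, splitting by which of the three branches in the definition of $\bfpi_i$ applies to $T$.

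The first two cases are immediate: when $i \notin \Des(T)$ we have $\bfpi_i(T) = T$, so $\bfpi_i^2(T) = \bfpi_i(T) = T$; and when $i$ is an attacking descent both $\bfpi_i(T)$ and $\bfpi_i^2(T)$ vanish. The substantive case is the remaining one, in which $i$ is a non-attacking descent and $\bfpi_i(T) = s_i \cdot T$. Here I must show (i) that $s_i \cdot T$ is actually a valid element of $\IGLTm{\lambda}{m}$ so that $\bfpi_i$ can be applied to it, and (ii) that $i \notin \Des(s_i \cdot T)$, so that $\bfpi_i$ acts as the identity on $s_i \cdot T$.

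For (i), I plan to invoke the observation following \cref{def: attacking descent} that every occurrence of $i+1$ in $T$ lies strictly below and strictly left of $\Bot_i(T)$. Verifying strict increase along rows and columns of $s_i \cdot T$ then reduces to a short case analysis on pairs of adjacent cells whose entries may have been swapped. The only configurations that could destroy the strict inequalities are an $i$ sitting immediately above an $i+1$ in the same column, or an $i$ sitting immediately to the left of an $i+1$ in the same row. The former is forbidden directly by condition (a) of \cref{def: attacking descent}; the latter is ruled out by the location constraint just recalled, since an $i$ in row $r$ forces $r \le r_\sfb^{(i)}(T)$, whereas an $i+1$ in row $r$ forces $r > r_\sfb^{(i)}(T)$. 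The gapless condition is preserved automatically because the multiset of entries of $T$ is unchanged under the swap.

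For (ii), I will compare $r_\sft^{(i)}(s_i \cdot T)$ with $r_\sfb^{(i+1)}(s_i \cdot T)$. In $s_i \cdot T$ the new $i$'s occupy the old positions of the $i+1$'s and therefore all lie in rows strictly greater than $r_\sfb^{(i)}(T)$, while the new $i+1$'s occupy the old positions of the $i$'s and so all lie in rows at most $r_\sfb^{(i)}(T)$. Hence $r_\sft^{(i)}(s_i \cdot T) > r_\sfb^{(i)}(T) \ge r_\sfb^{(i+1)}(s_i \cdot T)$, and $i$ is not a descent of $s_i \cdot T$. The only mildly delicate point in the whole argument is the row case in (i); everything else follows directly from the definitions.
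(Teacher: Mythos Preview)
Your proof is correct and follows essentially the same three-case split as the paper's own argument. The only difference is that you explicitly verify $s_i \cdot T \in \IGLTm{\lambda}{m}$, which the paper leaves implicit (it is really a well-definedness check for the operator $\bfpi_i$ itself rather than part of the idempotency argument); your treatment of this point is correct and the rest matches the paper's proof.
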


\begin{proof}
Let $T \in \IGLTm{\lambda}{m}$ and $1 \le i \le m-1$.
If $\bfpi_i(T) = T$ or $\bfpi_i(T) = 0$, then it is obvious that $\bfpi_i^2(T) = \bfpi_i(T)$.
If $\bfpi_i(T) = s_i \cdot T$, then, every $i+1$ is strictly below each $i$ in $T$.
This implies that $i \notin \Des(s_i \cdot T)$, thus $\bfpi_i^2(T) = \bfpi_i(T)$.
\end{proof}

\begin{lemma}\label{lem: commutation relations of bfpi}
For $1 \le i,j \le m-1$ with $|i-j|>1$, $\bfpi_i \bfpi_j  = \bfpi_j \bfpi_i$.
\end{lemma}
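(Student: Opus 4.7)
The plan is to reduce the identity $\bfpi_i\bfpi_j = \bfpi_j\bfpi_i$ to the observation that, when $|i-j|>1$, the operator $\bfpi_i$ only inspects and moves the entries $i$ and $i+1$, while $\bfpi_j$ only inspects and moves the entries $j$ and $j+1$, and the two sets $\{i,i+1\}$ and $\{j,j+1\}$ are disjoint. Hence the two operators act on ``different parts'' of any $T \in \IGLTm{\lambda}{m}$, and commutation should follow from a straightforward case analysis.

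Concretely, fix $T \in \IGLTm{\lambda}{m}$ with $|i-j|>1$. The first step is to prove the following stability lemma: applying $\bfpi_j$ to $T$ (which either fixes $T$, kills $T$, or swaps the labels $j$ and $j+1$) does not alter the positions of the boxes containing $i$ or $i+1$. Consequently, $\Top_i(T), \Bot_i(T), \Top_{i+1}(T), \Bot_{i+1}(T)$ are unchanged by $\bfpi_j$, and therefore whether $i$ is a descent of $T$, and whether $i$ is attacking in the sense of \cref{def: attacking descent}, are both invariants along any nonzero application of $\bfpi_j$. By symmetry the analogous statement holds for $j$ under $\bfpi_i$. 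This reduction is the only content of the lemma; once it is in place, the rest is bookkeeping.

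The second step is the case split. There are at most nine cases, obtained by independently choosing for $i$ and for $j$ one of (non-descent), (attacking descent), (non-attacking descent) relative to $T$. In each case we compute both sides:
\begin{itemize}
\item If either $i$ or $j$ is an attacking descent of $T$, then by the stability lemma the same holds after applying the other operator, so both $\bfpi_i\bfpi_j(T)$ and $\bfpi_j\bfpi_i(T)$ equal $0$.
\item If $i$ (resp.\ $j$) is not a descent of $T$, then it is still not a descent after applying $\bfpi_j$ (resp.\ $\bfpi_i$), so $\bfpi_i$ (resp.\ $\bfpi_j$) simply fixes the intermediate tableau, and both iterated compositions equal $\bfpi_j(T)$ (resp.\ $\bfpi_i(T)$).
\item The one genuinely nontrivial case is when both $i$ and $j$ are non-attacking descents of $T$. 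Here both sides produce the tableau obtained from $T$ by simultaneously swapping the labels $i \leftrightarrow i+1$ and $j \leftrightarrow j+1$. Since these swaps act on disjoint sets of cells, the order is irrelevant, so $\bfpi_i\bfpi_j(T) = s_i \cdot (s_j \cdot T) = s_j \cdot (s_i \cdot T) = \bfpi_j \bfpi_i(T)$.
\end{itemize}

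The main (and only) obstacle is verifying the stability lemma carefully, specifically that condition (b) of \cref{def: attacking descent} is preserved when $\bfpi_j$ swaps $j$ and $j+1$: one must check that no $(i+1)$-cell could coincide with a $j$- or $(j+1)$-cell, which is immediate since the entries are distinct labels and $\{i,i+1\} \cap \{j,j+1\} = \emptyset$. After that, the argument is purely combinatorial and assembles as above to give $\bfpi_i\bfpi_j = \bfpi_j\bfpi_i$ as linear operators on $\C\,\IGLTm{\lambda}{m}$.
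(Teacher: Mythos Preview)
Your proposal is correct and follows essentially the same approach as the paper: both arguments rest on the observation that for $|i-j|>1$ the sets $\{i,i+1\}$ and $\{j,j+1\}$ are disjoint, so $T^{-1}(i)$ and $T^{-1}(i+1)$ are unchanged by $s_j$, which forces the descent/attacking status of $i$ to be stable under $\bfpi_j$ and reduces the proof to a short case check ending with $s_i\cdot(s_j\cdot T)=s_j\cdot(s_i\cdot T)$. The only cosmetic difference is that the paper organizes the cases by the value of $\bfpi_i(T)\in\{T,0,s_i\cdot T\}$ rather than by the descent type of $i$, but these are the same trichotomy.
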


\begin{proof}
Let $T \in \IGLTm{\lambda}{m}$ and $1 \le i,j \le m-1$ with $|i-j|>1$.
Suppose that $\bfpi_i (T) = T$ or $\bfpi_i (T) = 0$.
If $\bfpi_j(T) = T$ or $\bfpi_j(T) = 0$, then it is obvious that $\bfpi_i \bfpi_j(T) = \bfpi_j \bfpi_i(T)$.
If $\bfpi_j(T) = s_j \cdot T$, then 
$$
T^{-1}(i) = (s_j \cdot T)^{-1}(i) 
\quad \text{and} \quad 
T^{-1}(i+1) = (s_j \cdot T)^{-1}(i+1).
$$
Combining this with the assumption that $\bfpi_i(T) = T$ or $\bfpi_i (T) = 0$, we have that $\bfpi_i(\bfpi_j(T)) = \bfpi_j(\bfpi_i(T))$.

For the remaining case, suppose that $\bfpi_i (T) = s_i \cdot T$ and $\bfpi_j (T) = s_j \cdot T$.
Since $|i - j| > 1$,
\[
T^{-1}(i) = (s_j \cdot T)^{-1}(i)
\quad \text{and} \quad 
T^{-1}(i+1) = (s_j \cdot T)^{-1}(i+1)
\]
and
\[
T^{-1}(j) = (s_i \cdot T)^{-1}(j)
\quad \text{and} \quad 
T^{-1}(j+1) = (s_i \cdot T)^{-1}(j+1).
\]
It follows that $\bfpi_i \cdot (s_j \cdot T) = s_i \cdot (s_j \cdot T)$ and $\bfpi_j \cdot (s_i \cdot T) = s_j \cdot (s_i \cdot T)$.
It is obvious that $s_i \cdot (s_j \cdot T) = s_j \cdot (s_i \cdot T)$, thus the assertion follows.
\end{proof}

\begin{lemma}\label{lem: braid relations of bfpi}
For $1 \le i \le m-2$, $\bfpi_i \bfpi_{i+1} \bfpi_{i} = \bfpi_{i+1} \bfpi_i \bfpi_{i+1}$.
\end{lemma}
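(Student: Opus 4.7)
The plan is to verify the braid relation $\bfpi_i \bfpi_{i+1} \bfpi_i T = \bfpi_{i+1} \bfpi_i \bfpi_{i+1} T$ on each $T \in \IGLTm{\lambda}{m}$ by direct case analysis, after two simplifying reductions. First, each $\bfpi_j$ either fixes $T$, sends it to $0$, or swaps the labels $j$ and $j+1$, so throughout the threefold composition only the boxes carrying the values $i$, $i+1$, $i+2$ can move; every other entry is inert. Consequently, it suffices to track how the three label-sets $T^{-1}(i)$, $T^{-1}(i+1)$, $T^{-1}(i+2)$ evolve. Second, for any intermediate tableau $U$, the action $\bfpi_j(U)$ is determined by assigning $j$ one of three types: $F$ (non-descent), $Z$ (attacking descent), or $S$ (non-attacking descent).

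I would then organize the verification by the pair $(\tau_i(T), \tau_{i+1}(T)) \in \{F,Z,S\}^{2}$, giving nine base cases. The four cases with $(\tau_i,\tau_{i+1}) \in \{F,Z\}^{2}$ are immediate, since the innermost operator on each side already produces the final value (either $T$, or $0$). For the five remaining cases in which at least one coordinate is $S$, I need to re-examine the descent type of the \emph{other} label after the initial swap, because swapping $i \leftrightarrow i+1$ (resp.\ $i+1 \leftrightarrow i+2$) can change the positions $\Top_{i+1}, \Bot_{i+1}$ (resp.\ $\Top_i, \Bot_i$), and may turn an attacking configuration into a non-attacking one or vice versa. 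The crucial geometric input is that, as noted just after \cref{def: attacking descent}, a non-attacking descent at $j$ forces every entry $j+1$ to lie strictly below and strictly to the left of $\Bot_j$; combined with the strictly increasing row/column conditions, this pins down the relative layout of the $i$-, $(i+1)$-, and $(i+2)$-boxes in each case.

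The main obstacle is the case $(\tau_i(T), \tau_{i+1}(T)) = (S, S)$. Here the three label-sets sit in a nested south-west cascade: every $(i+1)$ lies strictly south-west of $\Bot_i(T)$ and every $(i+2)$ lies strictly south-west of $\Bot_{i+1}(T)$. The plan is to compute both compositions on such $T$ and show that in each subcase they either both vanish or both land on the tableau obtained from $T$ by the cyclic rotation $i \mapsto i+1 \mapsto i+2 \mapsto i$ of labels within the appropriate subset of boxes. The argument proceeds by checking, after each step, whether the newly swapped label creates an attacking configuration with the third label; the nested-cascade structure is preserved under the first swap on either side, which keeps the bookkeeping manageable. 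The remaining mixed cases $(F,S), (S,F), (Z,S), (S,Z)$ are shorter variants of the same analysis, in each of which one verifies that the triple of operators on the applicable side either stabilizes or kills $T$ in agreement with the other side.
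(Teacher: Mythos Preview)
Your plan is essentially the paper's own proof: a direct case analysis on how $\bfpi_i$ and $\bfpi_{i+1}$ act on $T$, followed by tracking the descent type of the remaining label after each swap, using exactly the geometric fact that a non-attacking descent at $j$ forces every $j+1$ strictly south-west of $\Bot_j(T)$. The only cosmetic difference is that you partition by the pair $(\tau_i(T),\tau_{i+1}(T))\in\{F,Z,S\}^2$ up front, whereas the paper nests the split by first fixing $\bfpi_i(T)\in\{T,0,s_i\cdot T\}$ and then subdividing by $\bfpi_{i+1}(T)$; the resulting subcases and the arguments in each coincide.
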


\begin{proof}
Given $T \in \IGLTm{\lambda}{m}$ and $1 \le i \le m-2$, we have three cases.

\smallskip

\noindent {\it Case 1: $\bfpi_i (T) = T$.}
If $\bfpi_{i+1}(T) = T$ or 
$\bfpi_{i+1}(T) = 0$, the proof is straightforward.
Let us assume that $\bfpi_{i+1} \cdot T =s_{i+1}\cdot T$. 
Note that
\begin{align}\label{eq: pi_i braid}
\bfpi_i \bfpi_{i+1} \bfpi_{i}(T) = \bfpi_i (s_{i+1} \cdot T) \quad \text{and} \quad \bfpi_{i+1} \bfpi_i \bfpi_{i+1}(T) = \bfpi_{i+1} \bfpi_i (s_{i+1} \cdot T).
\end{align}
If $\bfpi_i (s_{i+1} \cdot T) = s_{i+1} \cdot T$ or $\bfpi_i (s_{i+1} \cdot T) = 0$, one can easily see that the right hand sides of the two equations in \cref{eq: pi_i braid} are the same.
For the remaining part of Case 1, suppose that $\bfpi_i (s_{i+1} \cdot T) = s_i \cdot (s_{i+1} \cdot T)$.
Since
$$
T^{-1}(i) = (s_{i}\cdot(s_{i+1}\cdot T))^{-1}(i+1)
\quad \text{and} \quad
T^{-1}(i+1) = (s_{i}\cdot(s_{i+1}\cdot T))^{-1}(i+2),
$$
the assumption $\bfpi_i (T) = T$ implies that $\bfpi_{i+1} (s_{i}\cdot(s_{i+1}\cdot T)) = s_{i}\cdot(s_{i+1}\cdot T)$.
Thus, we have that 
$$
\bfpi_i \bfpi_{i+1} \bfpi_{i}(T) = s_{i}\cdot(s_{i+1}\cdot T) = \bfpi_{i+1} \bfpi_i \bfpi_{i+1}(T).
$$
\smallskip

\noindent {\it Case 2: $\bfpi_i(T) = 0$.}
If $\bfpi_{i+1}(T) = T$ or $\bfpi_{i+1}(T) = 0$, then $\bfpi_i \bfpi_{i+1} \bfpi_i(T) = 0 =\bfpi_{i+1} \bfpi_i \bfpi_{i+1}(T)$.
Assume that $\bfpi_{i+1}(T) = s_{i+1}\cdot T$.
Let us consider the three subcases 
\[
\bfpi_{i}(s_{i+1}\cdot T)= s_{i+1}\cdot T,\quad
\bfpi_{i}(s_{i+1}\cdot T)= 0,
\quad \text{and} \quad
\bfpi_{i}(s_{i+1}\cdot T)= s_i \cdot (s_{i+1}\cdot T).
\]

In case where $\bfpi_{i}(s_{i+1}\cdot T) = s_{i+1}\cdot T$,
we have that $r_\sft^{(i)}(s_{i+1} \cdot T) \ge r_\sfb^{(i+1)}(s_{i+1} \cdot T)$.  
For the definitions of $r_\sft^{(i)}(s_{i+1} \cdot T)$ and $r_\sfb^{(i+1)}(s_{i+1} \cdot T)$, see~\cref{eq: rb cb rt ct}.
In addition, 
$r_\sft^{(i+2)}(T) > r_\sfb^{(i+1)}(T)$ since $\bfpi_{i+1}(T) = s_{i+1}\cdot T$. 
Therefore,
\begin{align*}
r_\sft^{(i)}(T)
= r_\sft^{(i)}(s_{i+1} \cdot T)
\ge r_\sfb^{(i+1)}(s_{i+1} \cdot T)
= r_\sfb^{(i+2)}(T)
> r_\sft^{(i+2)}(T)
> r_\sfb^{(i+1)}(T).
\end{align*}
But, this contradicts the assumption that $\bfpi_i(T) = 0$, thus $\bfpi_{i}(s_{i+1}\cdot T)$ cannot be $s_{i+1}\cdot T$.

In case where $\bfpi_{i}(s_{i+1}\cdot T)= 0$, we immediately have that $\bfpi_{i+1} \bfpi_i \bfpi_{i+1}(T) = 0$. 

In case where $\bfpi_{i}(s_{i+1}\cdot T)=s_{i}\cdot (s_{i+1}\cdot T)$,
since
$$
T^{-1}(i) = (s_{i}\cdot(s_{i+1}\cdot T))^{-1}(i+1) 
\quad \text{and} \quad 
T^{-1}(i+1) = (s_{i}\cdot(s_{i+1}\cdot T))^{-1}(i+2),
$$
the assumption $\bfpi_i (T) = 0$ implies that $\bfpi_{i+1} (s_{i}\cdot(s_{i+1}\cdot T)) = 0$.

\smallskip

\noindent {\it Case 3: $\bfpi_i(T) = s_i \cdot T$.} 
First, suppose that $\bfpi_{i+1}(T)=T$. 
Then
\[
\bfpi_i \bfpi_{i+1} \bfpi_{i}(T) = \bfpi_i (\bfpi_{i+1}(s_{i} \cdot T)) 
\quad \text{and} \quad 
\bfpi_{i+1} \bfpi_i \bfpi_{i+1}(T) = \bfpi_{i+1} (s_{i} \cdot T).
\]
If $\bfpi_{i+1} (s_{i} \cdot T)=s_{i}\cdot T$ or $\bfpi_{i+1} (s_{i} \cdot T)=0$, then one can easily show that $\bfpi_i (\bfpi_{i+1}(s_{i} \cdot T)) = \bfpi_{i+1} (s_{i} \cdot T)$.
In case where $\bfpi_{i+1} (s_{i} \cdot T)=s_{i+1}\cdot(s_{i}\cdot T)$, we have that
\[
\bfpi_i (\bfpi_{i+1}(s_{i} \cdot T)) = \bfpi_{i}(s_{i+1} \cdot (s_{i} \cdot T))
\quad \text{and} \quad
\bfpi_{i+1} (s_{i} \cdot T) = s_{i+1} \cdot (s_{i}\cdot T). 
\]
In addition, the assumption $\bfpi_{i+1}(T)=T$ implies that $r_\sft^{(i+1)}(T) \ge r_\sfb^{(i+2)}(T)$, equivalently,
$$
r_\sft^{(i)}(s_{i+1} \cdot (s_i \cdot T))
\ge r_\sfb^{(i+1)}(s_{i+1} \cdot (s_i \cdot T)).
$$
Therefore, $\bfpi_{i}(s_{i+1} \cdot (s_{i} \cdot T)) = s_{i+1} \cdot (s_{i}\cdot T)$.

Next, suppose that $\bfpi_{i+1}(T) = 0$. 
We claim that $\bfpi_{i}(\bfpi_{i+1}(s_{i} \cdot T))=0$. 
If $\bfpi_{i+1}(s_{i} \cdot T)=s_{i}\cdot T$, $r_\sft^{(i+1)}(s_i \cdot T) \ge 
r_\sfb^{(i+2)}(s_{i} \cdot T)$, equivalently, $r_\sft^{(i)}(T) \ge  r_\sfb^{(i+2)}(T)$.
Additionally, the assumption $\bfpi_{i}(T) = s_i \cdot T$ implies that $r_\sft^{(i+1)}(T) > r_\sfb^{(i)}(T)$.
Thus
\[
r_\sft^{(i+1)}(T) > r_\sfb^{(i)}(T) \ge r_\sft^{(i)}(T) \ge r_\sfb^{(i+2)}(T).
\]
It follows that $i+1 \notin \Des(T)$, which contradicts the assumption $\bfpi_{i+1}(T)=0$. 
Hence, $\bfpi_{i+1}(s_{i} \cdot T)$ cannot be $s_{i}\cdot T$.
If $\bfpi_{i+1}(s_{i}\cdot T)=0$, then it is clear that $\bfpi_{i}(\bfpi_{i+1}(s_{i} \cdot T))=0$. 
If $\bfpi_{i+1}(s_{i}\cdot T)=s_{i+1}\cdot (s_{i}\cdot T)$, then 
$$
T^{-1}(i + 1) =  (s_{i+1}\cdot(s_{i}\cdot T))^{-1}(i) 
\quad \text{and} \quad 
T^{-1}(i + 2) = (s_{i+1}\cdot(s_{i}\cdot T))^{-1}(i + 1).
$$
Thus, the assumption $\bfpi_{i+1} (T) = 0$ implies that $\bfpi_{i}(s_{i+1} \cdot (s_{i} \cdot T))=0$.

Finally, suppose that $\bfpi_{i+1}(T) = s_{i+1} \cdot T$. 
Then we have $r_\sft^{(i+2)}(T) > r_\sfb^{(i+1)}(T)$.
In addition, the assumption $\bfpi_i(T) = s_i \cdot T$ implies that
$r_\sft^{(i+1)}(T) > r_\sfb^{(i)}(T)$.
Thus,
$$
r_\sft^{(i+2)}(T) > r_\sfb^{(i+1)}(T) \ge r_\sft^{(i+1)}(T) > r_\sfb^{(i)}(T).
$$
Now, one can easily see that $\bfpi_i \bfpi_{i+1} \bfpi_{i}(T) = \bfpi_{i+1} \bfpi_i \bfpi_{i+1}(T)$.
\end{proof}

\cref{thm: 0-Hecke action on IGLT} now follows immediately from \cref{lem: idempotent relations of bfpi}, \cref{lem: commutation relations of bfpi}, and \cref{lem: braid relations of bfpi}.
Hereafter, for $1 \le m \le n$, we denote by $\bfG_{\lambda;m}$ the $H_m(0)$-module whose underlying space is $\C \, \IGLTm{\lambda}{m}$ and whose $H_m(0)$-action is given by \cref{thm: 0-Hecke action on IGLT}.

\begin{example}\label{eg: G 211} 
(1) When $T = \begin{array}{l}
$
\begin{ytableau}
\color{red} 1 & \large \color{red} 2 & \large \color{red} 3 & \large \color{red} 6 \\
\large \color{red} 2 & \large \color{red} 3 & \large \color{red} 5 & \large 7 \\
\large 4 & \large \color{red} 6
\end{ytableau}$
\end{array}$, we have
$$
\boldsymbol{\uppi}_3 (T) = s_3 \cdot T, \quad
\boldsymbol{\uppi}_4 (T) = T,
\quad \text{and} \quad \boldsymbol{\uppi}_i (T) = 0 
\quad \text{for $i = 1,2,5,6$}.
$$
Here, the indices in red are used to indicate the descents of the tableau.
\medskip

\noindent 
(2) Note that
\[
\IGLT( (2,1,1) ) = \left\{
\begin{array}{l}
\begin{ytableau}
1 & 2\\
3 \\
4
\end{ytableau}
\end{array},\ 
\begin{array}{l}
\begin{ytableau}
1 & 3\\
2 \\
4
\end{ytableau}
\end{array},\ 
\begin{array}{l}
\begin{ytableau}
1 & 4\\
2 \\
3
\end{ytableau}
\end{array},\ 
\begin{array}{l}
\begin{ytableau}
1 & 2\\
2 \\
3
\end{ytableau}
\end{array},\ 
\begin{array}{l}
\begin{ytableau}
1 & 3\\
2 \\
3
\end{ytableau}
\end{array}
\right\}.
\]
The descents of each tableau in $\IGLT((2,1,1))$ are given as follows:
\[
\arraycolsep=8pt
\def\arraystretch{2}
\begin{array}{c||c|c|c}
T           
&  
\scalebox{0.6}{
$
\begin{array}{l}
\begin{ytableau}
1 & 2\\
3 \\
4
\end{ytableau}
\end{array}$
}
&  
\scalebox{0.6}{
$\begin{array}{l}
\begin{ytableau}
1 & 3\\
2 \\
4
\end{ytableau}
\end{array}$
}
&  
\scalebox{0.6}{
$\begin{array}{l}
\begin{ytableau}
1 & 4\\
2 \\
3
\end{ytableau}
\end{array}$
}
\\ \hline\hline 
\Des(T) \subseteq [1,3]
& 
\{2, 3\}
&
\{1, 3\}
& 
\{1, 2\}
\end{array}
\quad \quad \quad
\begin{array}{c||c|c}
T
&  
\scalebox{0.6}{
$\begin{array}{l}
\begin{ytableau}
1 & 2\\
2 \\
3
\end{ytableau}
\end{array}$
}
&  
\scalebox{0.6}{
$\begin{array}{l}
\begin{ytableau}
1 & 3\\
2 \\
3
\end{ytableau}
\end{array}$
}     
\\ \hline \hline 
\Des(T) \subseteq [1,2]
& \{1, 2\}
& \{1, 2\}
\end{array}
\]
Therefore, $\GS{(2,1,1)} = (F_{(2,1,1)} + F_{(1,2,1)} + F_{(1,1,2)}) + 2F_{(1,1,1)}$.
The following figures illustrate the $H_m(0)$-action on $\bfG_{(2,1,1);m}$ for $m = 3,4$: 
\[
\begin{tikzpicture}
\def \hp {4}
\def \vp {2.5}


\node[] at (0 -0.5*\hp,2) {
\begin{ytableau}
1 & \color{exgreen} 2\\
\color{red} 3 \\
4
\end{ytableau}
};

\node at (0.5-0.5*\hp, 2.1) {} edge [out=40,in=320, loop] ();
\node[] at (1.4-0.5*\hp,2.1) {$\pi_1$};

\draw [->] (0-0.5*\hp, 1) -- (0-0.5*\hp,0.5); 
\node at (0.3-0.5*\hp, 0.75) {$\pi_2$};

\draw [->] (0.6-0.5*\hp, 1.15) -- (1.2-0.5*\hp, 0.75);
\node at (1.1-0.5*\hp, 1.1) {$\pi_3$};
\node at (1.5-0.5*\hp,0.6) {$0$};

\node[] at (0-0.5*\hp,-0.5) {
\begin{ytableau}
\color{red} 1 & \color{exgreen} 3 \\
2 \\
4
\end{ytableau}
};

\node at (0.5-0.5*\hp, -0.4) {} edge [out=40,in=320, loop] ();
\node[] at (1.4-0.5*\hp,-0.4) {$\pi_2$};

\draw [->] (0-0.5*\hp,-2.5 + 1.05) -- (0-0.5*\hp,-2.5 + 0.55); 
\node at (0.3-0.5*\hp, -2.5 + 0.9) {$\pi_3$};

\draw [->] (0.6-0.5*\hp, -2.5 + 1.15) -- (1.2-0.5*\hp, -2.5 + 0.75);
\node at (1.1-0.5*\hp, -2.5 + 1.1) {$\pi_1$};
\node at (1.5-0.5*\hp, -2.5 + 0.6) {$0$};

\node[] at (0-0.5*\hp, -2.5 -0.5) {
\begin{ytableau}
\color{red} 1 & 4 \\
\color{red} 2 \\
3
\end{ytableau}
};

\node at (0.5-0.5*\hp, -2.5 -0.4) {} edge [out=40,in=320, loop] ();
\node[] at (1.4-0.5*\hp, -2.5 -0.4) {$\pi_3$};

\draw [->] (0-0.5*\hp,-5 + 1.05) -- (0-0.5*\hp,-5 + 0.55); 
\node at (0.6-0.5*\hp, -5 + 0.9) {$\pi_1, \pi_2$};

\node[] at (0-0.5*\hp,-4.7) {$0$};


\node[] at (\hp, 2) {
\begin{ytableau}
\color{red} 1 & \color{red} 2\\
\color{red} 2 \\
3
\end{ytableau}
};

\draw [->] (0 + \hp, 1) -- (0 + \hp,0.5); 
\node at (0.7 + \hp, 0.75) {$\pi_1, \pi_2$};

\node[] at (\hp, 0.25) {$0$};


\node[] at (2*\hp, 2) {
\begin{ytableau}
\color{red} 1 & 3\\
\color{red} 2 \\
3
\end{ytableau}
};

\draw [->] (0 + 2*\hp, 1) -- (0 + 2*\hp,0.5); 
\node at (0.7 + 2*\hp, 0.75) {$\pi_1, \pi_2$};

\node[] at (2*\hp, 0.25) {$0$};

\node[] at (1.5*\hp, 2) {\large $\oplus$};

\node[] at (0-0.5*\hp, -5.75) {$\bfG_{(2,1,1);4}$};
\node[] at (0 + 1.5*\hp, -1) {$\bfG_{(2,1,1);3}$};
\end{tikzpicture}
\]
\end{example}

In \cref{Prop: characteristic image of G}, we will prove that 
\begin{align}\label{eq: char of bfG}
\ch([\bfG_{\lambda;m}]) = \GSm{\lambda}{m} \quad \text{for any $\lambda \vdash n$ and $1 \le m \le n$,}
\end{align}
which implies that $\sum_{1 \le m \le n} \ch([\bfG_{\lambda;m}]) = \GS{\lambda}$.

We close this subsection by providing a remark which tells us that for some $\lambda \vdash n$ and $1\le m \le n$, there is no indecomposable $H_m(0)$-module $M$ satisfying $\ch([M]) = \GSm{\lambda}{m}$.

\begin{remark}
\label{rem: counterexample for indecomposable module}
In \cite[Theorem 4.7]{02DHT}, Duchamp, Hivert, and Thibon described the \emph{Ext-quiver} of $H_m(0)$. 
For the definition of Ext-quivers, see \cite[Definition 2.7.5]{14Zimmerman}.
According to their result, for any $\alpha \models m$, we have $\Ext^1_{H_m(0)}(\bfF_\alpha, \bfF_\alpha) = 0$, equivalently, there is no indecomposable $H_m(0)$-module $M$ such that $\ch([M]) = 2 F_\alpha$.
On the other hand, in \cref{eg: G 211}, we see that $\GSm{(2,1,1)}{3} = 2 F_{(1,1,1)}$.
Thus, we conclude that there is no indecomposable $H_3(0)$-module $M$ satisfying $\ch([M]) = \GSm{(2,1,1)}{3}$.
\end{remark}

\subsection{A direct sum decomposition of $\bfG_{\lambda;m}$ into $H_m(0)$-submodules}
\label{Subsec: decomposition}

Let us start with necessary definitions and notation.
Given $T \in \IGLTm{\lambda}{m}$, let
\[
\calI(T) := \left\{ i \in [1,m] \; \middle| \; |T^{-1}(i)| > 1 \right\}.
\]
Recall that we let $(r_\sfb^{(k)}, c_\sfb^{(k)}) = \Bot_k(T)$ and $(r_\sft^{(k)}, c_\sft^{(k)}) = \Top_k(T)$ for $1 \le k \le \max(T)$.
Given $i \in \calI(T)$, let $\Gamma_i(T)$ be the lattice path from $\grco{r_\sfb^{(i)}}{c_\sfb^{(i)} - 1}$ to $\grco{r_\sft^{(i)} - 1}{c_\sft^{(i)}}$ satisfying the following two conditions:

\begin{enumerate}[label = {\rm (\roman*)}]
\item if the path passes through two boxes horizontally, then the entry at the above box is strictly smaller than $i$ and the entry at the below box is weakly greater than $i$, and 
\item if the path passes through two boxes vertically, then the entry at the left box is strictly smaller than $i$ and the entry at the right box is weakly greater than $i$.
\end{enumerate}
Pictorially,
\begin{equation*}
\begin{array}{l}
\begin{tikzpicture}
\node [] at (-1,0) {(i)};

\node [] at (0,0) {$\begin{ytableau}
a\\
b
\end{ytableau}$};

\draw[line width = 0.5mm] (-0.5,0) -- (0.5,0);

\node[] at (2,0) {$\implies a < i \le b$};
\end{tikzpicture}
\end{array}
\quad \text{and} \quad
\begin{array}{l}
\begin{tikzpicture}
\node [] at (-1.25,0) {(ii)};

\node [] at (0,0) {$\begin{ytableau}
a & b
\end{ytableau}$};

\draw[line width = 0.5mm] (0,-0.5) -- (0,0.5);

\node[] at (2,0) {$\implies a < i \le b$.};
\end{tikzpicture}
\end{array}
\end{equation*}

\begin{example}\label{Eg: lattice path}
Let 
\[
T = \begin{array}{l}\scalebox{0.75}{$
\begin{ytableau}
1  & 6  & 10 & 14 & 22 & 24 & 26 & \color{blue} 27 \\
2  & 7  & 11 & 15 & 23 & 25 & \color{brown} 29    \\
3  & 8  & 12 & 16 & 28 & \color{brown} 29         \\
4  & 9  & 13 & \color{red} 17                      \\
5  &\color{red} 17 & \color{blue} 27              \\
18 & 20                                \\
19 & \color{violet} 21                             \\
\color{violet} 21                                     
\end{ytableau}$}
\end{array}.
\]
Note that $\calI(T) = \{17,21,27,29\}$.
By following the way of defining lattice paths, we obtain the lattice paths $\Gamma_{17}(T),\Gamma_{21}(T),\Gamma_{27}(T)$, and $\Gamma_{29}(T)$ as follows:
\[
\def\pp {0.512}
\def\hp {5.5}
\scalebox{0.75}{$
\begin{tikzpicture}

\node at (0, 0) {$
\begin{ytableau}
1  & 6  & 10 & 14 & 22 & 24 & 26 & 27 \\
2  & 7  & 11 & 15 & 23 & 25 & 29    \\
3  & 8  & 12 & 16 & 28 & 29         \\
4  & 9  & 13 & \color{red} 17                      \\
5  & \color{red} 17 & 27                          \\
18 & 20                                \\
19 & 21                             \\
21                                 
\end{ytableau}
$};

\draw[line width = 0.5mm, color = red] 
(-3*\pp, -1*\pp) -- (-3*\pp, 0*\pp) -- (-1*\pp, 0*\pp) -- (-1*\pp, 1*\pp) -- (0*\pp, 1*\pp);

\node[] at (0, -2.5) {$\Gamma_{17}(T)$};

\node at (0 + \hp,0) {$
\begin{ytableau}
1  & 6  & 10 & 14 & 22 & 24 & 26 & 27 \\
2  & 7  & 11 & 15 & 23 & 25 & 29    \\
3  & 8  & 12 & 16 & 28 & 29         \\
4  & 9  & 13 & 17                      \\
5  & 17 & 27                          \\
18 & 20                                \\
19 & \color{violet} 21                             \\
\color{violet} 21                                     
\end{ytableau}
$};

\draw[line width = 0.5mm, color = red] (-4*\pp +\hp, -4*\pp) -- (-4*\pp + \hp, -3*\pp) -- (-3*\pp + \hp, -3*\pp) -- (-3*\pp + \hp, -2*\pp) -- (-2*\pp + \hp, -2*\pp);
\node[] at (0 + \hp, -2.5) {$\Gamma_{21}(T)$};

\node at (0 + 2*\hp,0) {$
\begin{ytableau}
1  & 6  & 10 & 14 & 22 & 24 & 26 & \color{blue} 27 \\
2  & 7  & 11 & 15 & 23 & 25 & 29    \\
3  & 8  & 12 & 16 & 28 & 29         \\
4  & 9  & 13 & 17                      \\
5  & 17 & \color{blue} 27                          \\
18 & 20                                \\
19 & 21                             \\
21                                 
\end{ytableau}
$};

\draw[line width = 0.5mm, color = red] 
(-2*\pp + 2*\hp, -1*\pp) -- (-2*\pp + 2*\hp, 0*\pp) -- (0*\pp + 2*\hp, 0*\pp) -- (0*\pp + 2*\hp, 2*\pp) -- (2*\pp + 2*\hp, 2*\pp) -- (2*\pp + 2*\hp, 3*\pp) -- (3*\pp + 2*\hp, 3*\pp) -- (3*\pp + 2*\hp, 4*\pp) -- (4*\pp + 2*\hp, 4*\pp);
\node[] at (2*\hp, -2.5) {$\Gamma_{27}(T)$};

\node at (0 + 3*\hp, 0) {$
\begin{ytableau}
1  & 6  & 10 & 14 & 22 & 24 & 26 & 27 \\
2  & 7  & 11 & 15 & 23 & 25 & \color{brown} 29    \\
3  & 8  & 12 & 16 & 28 & \color{brown} 29         \\
4  & 9  & 13 & 17                      \\
5  & 17 & 27                          \\
18 & 20                                \\
19 & 21                             \\
21  
\end{ytableau}
$};

\draw[line width = 0.5mm, color = red] 
(1*\pp + 3*\hp, 1*\pp) -- (1*\pp + 3*\hp, 2*\pp) -- (2*\pp + 3*\hp, 2*\pp) -- (2*\pp + 3*\hp, 3*\pp) -- (3*\pp + 3*\hp, 3*\pp);
\node[] at (3*\hp, -2.5) {$\Gamma_{29}(T)$};
\end{tikzpicture}
$}
\]
\end{example}

Given a lattice path $\Gamma$, let $V(\Gamma)$ be the set of lattice points through which $\Gamma$ passes.
For two lattice paths $\Gamma$ and $\Gamma'$, we write $\Gamma = \Gamma'$ if $V(\Gamma) = V(\Gamma')$.
Now, we define the following equivalence relation on $\IGLTm{\lambda}{m}$.

\begin{definition}\label{def: equiv relation}
Let $\lambda \vdash n$ and $T_{1},T_{2} \in \IGLTm{\lambda}{m}$. 
The equivalence relation $\sim_{\lambda;m}$ on $\IGLTm{\lambda}{m}$ is defined by $T_{1} \sim_{\lambda;m} T_{2}$ if and only if 
\[
\left\{ \left(\Gamma_i(T_1), T_1^{-1}(i) \right) \; \middle| \; i \in \calI(T_1) \right\} 
= \left\{ \left(\Gamma_i(T_2), T_2^{-1}(i) \right) \; \middle| \; i \in \calI(T_2) \right\}.
\]
\end{definition}
If $\lambda$ and $m$ are clear in the context, we will drop the subscript from $\sim_{\lambda;m}$.
Let $\calE_{\lambda;m}$ be the set of equivalence classes of $\IGLTm{\lambda}{m}$ with respect to $\sim$.

\begin{example}
Let 
$$
\def\pp {0.38}
\def\hp {5.5}
\begin{array}{l}
\begin{tikzpicture}
\node at (-1.5,0) {$T_1 = $};
\node at  (0,0) {\scalebox{0.75}{$
\begin{ytableau}
1 & \color{red} 2 & \color{blue} 3 & \color{brown} 5 \\
\color{red} 2 & 4 & \color{brown} 5 & 6 \\
\color{blue} 3 & \color{brown} 5 & 7
\end{ytableau}$}};
\draw[line width = 0.5mm, color = red]  (-2*\pp, -0.5*\pp) -- (-2*\pp, 0.5*\pp) -- (-1*\pp, 0.5*\pp) -- (-1*\pp, 1.5*\pp) -- (0*\pp, 1.5*\pp);
\draw[line width = 0.5mm, color = blue]  (-2*\pp, -1.5*\pp) -- (-2*\pp, -0.5*\pp) -- (-1*\pp, -0.5*\pp)  -- (-1*\pp, 0.5*\pp) -- (0*\pp, 0.5*\pp) -- (0*\pp, 1.5*\pp) -- (1*\pp, 1.5*\pp);
\draw[line width = 0.5mm, color = brown]  (-1*\pp, -1.5*\pp) -- (-1*\pp, -0.5*\pp) -- (0*\pp, -0.5*\pp)  -- (0*\pp, 0.5*\pp) -- (1*\pp, 0.5*\pp) -- (1*\pp, 1.5*\pp) -- (2*\pp, 1.5*\pp);
\end{tikzpicture}
\end{array},
\ \ 
\begin{array}{l}
\begin{tikzpicture}
\node at (-1.5,0) {$T_2 = $};
\node at  (0,0) {\scalebox{0.75}{$
\begin{ytableau}
1 & \color{red} 2 & \color{blue} 3 & \color{brown} 5 \\
\color{red} 2 & 4 & \color{brown} 5 & 7 \\
\color{blue} 3 & \color{brown} 5 & 6
\end{ytableau}$}};
\draw[line width = 0.5mm, color = red]  (-2*\pp, -0.5*\pp) -- (-2*\pp, 0.5*\pp) -- (-1*\pp, 0.5*\pp) -- (-1*\pp, 1.5*\pp) -- (0*\pp, 1.5*\pp);
\draw[line width = 0.5mm, color = blue]  (-2*\pp, -1.5*\pp) -- (-2*\pp, -0.5*\pp) -- (-1*\pp, -0.5*\pp)  -- (-1*\pp, 0.5*\pp) -- (0*\pp, 0.5*\pp) -- (0*\pp, 1.5*\pp) -- (1*\pp, 1.5*\pp);
\draw[line width = 0.5mm, color = brown]  (-1*\pp, -1.5*\pp) -- (-1*\pp, -0.5*\pp) -- (0*\pp, -0.5*\pp)  -- (0*\pp, 0.5*\pp) -- (1*\pp, 0.5*\pp) -- (1*\pp, 1.5*\pp) -- (2*\pp, 1.5*\pp);
\end{tikzpicture}
\end{array},
\ \ 
\begin{array}{l}
\begin{tikzpicture}
\node at (-1.5,0) {$T_3 = $};
\node at  (0,0) {\scalebox{0.75}{$
\begin{ytableau}
1 & \color{red} 2 & \color{blue} 3 & \color{brown} 4 \\
\color{red} 2 & \color{blue} 3 & 5 & 6 \\
\color{blue} 3 & \color{brown} 4 & 7
\end{ytableau}$}};
\draw[line width = 0.5mm, color = red]  (-2*\pp, -0.5*\pp) -- (-2*\pp, 0.5*\pp) -- (-1*\pp, 0.5*\pp) -- (-1*\pp, 1.5*\pp) -- (0*\pp, 1.5*\pp);
\draw[line width = 0.5mm, color = blue]  (-2*\pp, -1.5*\pp) -- (-2*\pp, -0.5*\pp) -- (-1*\pp, -0.5*\pp)  -- (-1*\pp, 0.5*\pp) -- (0*\pp, 0.5*\pp) -- (0*\pp, 1.5*\pp) -- (1*\pp, 1.5*\pp);
\draw[line width = 0.5mm, color = brown]  (-1*\pp, -1.5*\pp) -- (-1*\pp, -0.5*\pp) -- (0*\pp, -0.5*\pp)  -- (0*\pp, 0.5*\pp) -- (1*\pp, 0.5*\pp) -- (1*\pp, 1.5*\pp) -- (2*\pp, 1.5*\pp);
\end{tikzpicture}
\end{array},
\ \ 
\text{and}
\ \ 
\begin{array}{l}
\begin{tikzpicture}
\node at (-1.5,0) {$T_4 = $};
\node at  (0,0) {\scalebox{0.75}{$
\begin{ytableau}
1 & \color{red} 2 & \color{blue} 4 & \color{brown} 5 \\
\color{red} 2 & 3 & \color{brown} 5 & 6 \\
\color{blue} 4 & \color{brown} 5 & 7
\end{ytableau}$}};
\draw[line width = 0.5mm, color = red]  (-2*\pp, -0.5*\pp) -- (-2*\pp, 0.5*\pp) -- (-1*\pp, 0.5*\pp) -- (-1*\pp, 1.5*\pp) -- (0*\pp, 1.5*\pp);
\draw[line width = 0.5mm, color = blue]  (-2*\pp, -1.5*\pp) 
-- (-2*\pp, -0.5*\pp + 0.05*\pp) 
-- (-1*\pp - 0.05*\pp, -0.5*\pp + 0.05*\pp)
-- (0*\pp - 0.05*\pp, -0.5*\pp + 0.05*\pp)
-- (0*\pp - 0.05*\pp, 0.5*\pp + 0.05*\pp)
-- (0*\pp - 0.05*\pp, 1.5*\pp)
-- (1*\pp, 1.5*\pp);
\draw[line width = 0.5mm, color = brown]  (-1*\pp, -1.5*\pp)
-- (-1*\pp, -0.5*\pp - 0.05*\pp)
-- (0*\pp + 0.05*\pp, -0.5*\pp - 0.05*\pp)
-- (0*\pp + 0.05*\pp, 0.5*\pp - 0.05*\pp)
-- (1*\pp + 0.05*\pp, 0.5*\pp - 0.05*\pp)
-- (1*\pp + 0.05*\pp, 1.5*\pp)
-- (2*\pp, 1.5*\pp);
\end{tikzpicture}
\end{array}.
$$
Then, $T_1 \sim_{(4,3,2);5} T_2$, but
$T_1 \not \sim_{(4,3,2);5} T_k$ for $k = 3,4$. 
\end{example}

\begin{theorem}\label{thm: preserving} 
Let $m$ and $n$ be positive integers with $m \le n$ and let $\lambda \vdash n$.
For any $1 \le i \le m-1$ and $E \in \calE_{\lambda;m}$, $\pi_i \cdot \C E \subseteq \C E$.
\end{theorem}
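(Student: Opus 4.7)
Let $T \in E$ and $1 \le i \le m-1$. By the definition of the $H_m(0)$-action on $\bfG_{\lambda;m}$, the element $\pi_i \cdot T$ is one of $T$, $0$, and $s_i \cdot T$; the first two options automatically lie in $\C E$. Hence the theorem reduces to showing that $s_i \cdot T \sim_{\lambda;m} T$ whenever $i$ is a non-attacking descent of $T$.

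I would begin by sharpening the observation stated right after Definition 3.1: if $i$ is a non-attacking descent of $T$, then every cell of $T^{-1}(i+1)$ lies both strictly below and strictly to the left of $\Bot_i(T) = (r_\sfb^{(i)}, c_\sfb^{(i)})$. ``Strictly below'' is condition (b). ``Strictly to the left'' follows because a cell at $(r,c)$ with $r > r_\sfb^{(i)}$ and $c \ge c_\sfb^{(i)}$ would force, by the strict row and column increase of an IGLT starting from the $i$ at $(r_\sfb^{(i)}, c_\sfb^{(i)})$, the entry at $(r,c)$ to be at least $i+2$. Dually, every $i$-cell other than $\Bot_i(T)$ lies strictly northeast of $\Bot_i(T)$, at row $< r_\sfb^{(i)}$ and column $> c_\sfb^{(i)}$.

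Next I would compare the pair collections in Definition 2.13. Let $\iota$ denote the bijection on $[1,m]$ swapping $i \leftrightarrow i+1$ and fixing the other values; it restricts to a bijection $\calI(T) \to \calI(s_i \cdot T)$ with $(s_i \cdot T)^{-1}(\iota(k)) = T^{-1}(k)$, so the preimage parts of the pairs match automatically under $\iota$, and the task becomes $\Gamma_k(T) = \Gamma_{\iota(k)}(s_i \cdot T)$ for each $k \in \calI(T)$. For $k \notin \{i, i+1\}$ this is immediate: the partition of boxes into $\{<k\}$ and $\{\ge k\}$ is unaffected by the swap (both $i$ and $i+1$ lie on the same side of the threshold), and the endpoints of $\Gamma_k$ are identical in $T$ and in $s_i \cdot T$, so uniqueness of the separating path yields equality.

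The main obstacle is the case $k \in \{i, i+1\}$, where the defining partitions truly differ by the positions of $T^{-1}(i)$ or $T^{-1}(i+1)$ even though the endpoints still match under $\iota$. I would close this via the adjacency claim: no cell of $T^{-1}(i+1)$ is adjacent to $\Gamma_i(T)$ on its lower-right side, and no cell of $T^{-1}(i)$ is adjacent to $\Gamma_{i+1}(T)$ on its upper-left side. The monotone up-and-right path $\Gamma_i(T)$ visits lattice points with row index in $[r_\sft^{(i)} - 1, r_\sfb^{(i)}]$ and column index in $[c_\sfb^{(i)} - 1, c_\sft^{(i)}]$, so every cell adjacent to it on the lower-right side has row $\le r_\sfb^{(i)}$ and column $\ge c_\sfb^{(i)}$, a region excluding every $(i+1)$-cell by the first geometric remark. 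Symmetrically, $\Gamma_{i+1}(T)$ visits lattice points with row index $\ge r_\sft^{(i+1)} - 1 \ge r_\sfb^{(i)}$ and column index $\le c_\sft^{(i+1)} < c_\sfb^{(i)}$, so every cell adjacent to it on the upper-left side has row $\ge r_\sfb^{(i)}$ and column $< c_\sfb^{(i)}$, a region excluding every $i$-cell by the dual remark. Consequently, the edge conditions defining $\Gamma_i(T)$ in $T$ imply those of $\Gamma_{i+1}(s_i \cdot T)$ in $s_i \cdot T$ at every edge of the path, and the analogous implication holds between $\Gamma_{i+1}(T)$ and $\Gamma_i(s_i \cdot T)$; uniqueness of the separating path then forces $\Gamma_i(T) = \Gamma_{i+1}(s_i \cdot T)$ and $\Gamma_{i+1}(T) = \Gamma_i(s_i \cdot T)$, completing the multiset equality required for $s_i \cdot T \sim_{\lambda;m} T$.
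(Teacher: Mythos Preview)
Your proposal is correct and follows essentially the same approach as the paper: reduce to the case where $i$ is a non-attacking descent, then show that the swap $s_i$ preserves the multiset of pairs $(\Gamma_k, T^{-1}(k))$ via the bijection $k \mapsto s_i(k)$ on $\calI(T)$. The paper's version is much terser---it simply asserts that since all $i$'s lie strictly above all $(i+1)$'s, ``acting $\pi_i$ on $T$ does not change any lattice paths''---whereas you supply the missing geometric verification for $k\in\{i,i+1\}$ (the bounding-box argument showing no $(i{+}1)$-cell borders $\Gamma_i(T)$ on its lower-right and no $i$-cell borders $\Gamma_{i+1}(T)$ on its upper-left), which is exactly what justifies the paper's assertion.
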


\begin{proof}
Let $T \in \IGLTm{\lambda}{m}$.
If $\pi_{i} \cdot T$ is $T$ or $0$, then $\pi_{i} \cdot T$ is clearly contained in $E$.
Therefore, we may assume that $\pi_{i} \cdot T = s_{i} \cdot T$.
In this case, all $i$'s are strictly above all $(i+1)$'s in $T$.
This implies that acting $\pi_{i}$ on $T$ does not change any lattice paths.
In addition, from the definition of $\pi_{i}$-action on $T$, it follows that for any $j \in \calI(T)$,
\begin{align*}
\left(\Gamma_j(T), T^{-1}(j) \right) 
= \begin{cases}
\left(
\Gamma_j(\pi_i \cdot T), (\pi_i \cdot T)^{-1}(j) \right) 
& \text{if $j \in \calI(T) \setminus \{i, i+1\}$}, 
\\
\left(\Gamma_{s_i(j)}(\pi_i \cdot T), (\pi_i \cdot T)^{-1}(s_i(j)) \right) 
& \text{if $j \in \calI(T) \cap \{i, i+1\}$}.
\end{cases} 
\end{align*}
Thus, we have that
\[
\left\{ \left(\Gamma_j(T), T^{-1}(j) \right) \; \middle| \; j \in \calI(T) \right\} 
= \left\{ \left(\Gamma_j(\pi_i \cdot T), (\pi_i \cdot T)^{-1}(j) \right) \; \middle| \; j \in \calI(\pi_i \cdot T) \right\},
\]
which implies that $\pi_i \cdot T \in E$.
\end{proof}

For each $E \in \calE_{\lambda;m}$, let $\bfG_E$ be the $H_m(0)$-submodule of $\bfG_{\lambda;m}$ whose underlying space is the $\C$-span of $E$.
Then, we have the following direct sum decomposition
\begin{equation*}
\bfG_{\lambda;m} = \bigoplus_{E \in \calE_{\lambda;m}} \bfG_E.
\end{equation*}

\section{Source and sink tableaux}
\label{Sec: source and sink}

The goal of this section is to show that there are two distinguished tableaux, called \emph{source} and \emph{sink tableaux}, in each equivalence class $E \in \calE_{\lambda; m}$.
To achieve our goal, we first give a characterization for source and sink tableaux.
Then, we construct two tableaux $\source(T)$ and $\sink(T)$ for each $T \in E$.
Finally, we verify that $\source(T)$ (resp. $\sink(T)$) is the unique source tableau (resp. sink tableau) in $E$, where $T$ is an arbitrary chosen element in $E$.
Hereafter, $E$ denotes an equivalence class of $\IGLTm{\lambda}{m}$ with respect to $\sim$ and $T$ denotes a tableau contained in $\IGLTm{\lambda}{m}$ unless otherwise stated.

To begin with, we give definitions for source tableaux and sink tableaux in $\IGLTm{\lambda}{m}$.

\begin{definition}\label{def: source and sink tableaux}
Let $T \in \IGLTm{\lambda}{m}$.
\begin{enumerate}[label = {\rm (\arabic*)}, itemsep = 0.5ex]
\item $T$ is said to be a \emph{source tableau} if there does not exist $T' \in \IGLTm{\lambda}{m}$ and $1 \le i \le m-1$ such that $\pi_i \cdot T' = T$ and $T' \neq T$.
\item $T$ is said to be a \emph{sink tableau} if there does not exist $T' \in \IGLTm{\lambda}{m}$ and $1 \le i \le m-1$ such that $\pi_i \cdot T = T'$ and $T' \neq T$.
\end{enumerate}
\end{definition}

The following lemma characterizes source and sink tableaux.

\begin{lemma}\label{Lem: source and sink cond}
The following hold.
\hfill
\begin{enumerate}[label = {\rm (\arabic*)}]
\item $T$ is a source tableau if and only if for all $i \notin \Des(T)$, $T((r_\sft^{(i)}, c_\sft^{(i)} + 1)) = i+1$, that is,  the box right adjacent to $\Top_i(T)$ is filled with $i+1$.
\item $T$ is a sink tableau if and only if $i$ is an attacking descent for all $i \in \Des(T)$.
\end{enumerate}
\end{lemma}

\begin{proof}
(1) 
To prove the ``only if'' part, suppose that $T$ is a source tableau. 
Assume, on the contrary, there exists $i \notin \Des(T)$ such that $i+1$ does not appear in the box $(r_\sft^{(k)}, c_\sft^{(k)} + 1)$.
Then one can easily see that $s_{i} \cdot T$ is contained in $\IGLTm{\lambda}{m}$.
This contradicts the assumption that $T$ is a source tableau because $\pi_i \cdot (s_{i} \cdot T) = T$.

Next, let us prove the ``if'' part.
Suppose contrary that $T$ is not a source tableau.
Then, there exists $T' \in \IGLTm{\lambda}{m}$ and $1 \le i \le n-1$ such that $\pi_i \cdot T' = T$ and $T' \neq T$.
This implies that $i \notin \Des(T)$, that is, $r_\sft^{(i)}(T) > r_\sfb^{(i+1)}(T)$.
Thus, the box right adjacent to $\Top_i(T)$ cannot be filled with $i+1$.

(2) The assertion immediately follows from the definitions of sink tableaux and attacking descents.
\end{proof}

\subsection{Existence and uniqueness of source tableaux in $E$}
\label{subsec: source tableau}

In this subsection, we construct the desired tableau $\source(T)$ and show that it is the unique source tableau in $E$.
To do this, we need the following preparation.

Given two lattice points $P$ and $P'$ in the same row, we denote the horizontal line from $P$ to $P'$ by $\tHL(P,P')$.
For each $i \in \calI(T)$, we define a new lattice path
$\tGam_i(T)$ by extending $\Gamma_i(T)$ with the following algorithm.

\begin{algorithm}\label{alg: tilde Gamma}
Fix $i \in \calI(T)$.
\begin{enumerate}[label = {\it Step \arabic*.}]
\item For each $j \in \calI(T)$, set $\Gamma'_j$ to be the lattice path obtained by connecting the following three lattice paths: 
\[
\tHL(\grco{r_\sfb^{(j)}}{0}, \grco{r_\sfb^{(j)}}{c_\sfb^{(j)} - 1}),
\ \ 
\Gamma_j(T),
\ \  \text{and} \ \ 
\tHL(\grco{r_\sft^{(j)} -1}{c_\sft^{(j)}}, \grco{r_\sft^{(j)} - 1}{\lambda_{r_\sft^{(j)} - 1}}).
\]
Here, $\lambda_0 := \lambda_1$.
\item Set $\grco{r_\sft}{c_{\sft}}$ to be the lattice point in $V(\Gamma'_i)$ satisfying that
\[
r_{\sft} = \min\{ r \mid \grco{r}{c} \in V(\Gamma'_i) \}
\quad \text{and} \quad
c_{\sft} = \min\{ c \mid \grco{r_{\sft}}{c} \in V(\Gamma'_i) \}.
\]
\item If there exists $j \in \calI(T)$ such that
\begin{align}\label{Eq: top cross condition}
r' < r_\sft < r'' \quad \text{and} \quad c', c'' > c_\sft
\quad \text{for some $\grco{r'}{c'}, \grco{r''}{c''} \in V(\Gamma'_j)$,}
\end{align}
then go to {\it Step 4}. Otherwise, go to {\it Step 5}.
\item Let $j_0 = \min\{j \mid \text{$\Gamma'_j$ satisfies \cref{Eq: top cross condition}}\}$ and $c_{0} = \min \left\{ 
c \mid  \grco{r_{\sft}}{c} \in V(\Gamma'_{j_0})
\right\}$.
Then, let $\Gamma$ be the lattice path satisfying that
\[
V(\Gamma) = V(\Gamma'_i) \setminus \{\grco{r_\sft}{c} \mid c \ge c_0\} \cup \{ \grco{r}{c} \in V(\Gamma'_{j_0}) \mid r \le r_\sft \ \text{and} \  c \ge c_0 \}.
\]
Set $\Gamma'_i := \Gamma$. 
Go to {\it Step 2}.
\item Return $\tGam_i(T) := \Gamma'_i$ and terminate the algorithm.
\end{enumerate}
\end{algorithm}

If $T$ is clear in the context, we simply write the lattice path $\tGam_i(T)$ by $\tGam_i$ for $i \in \calI(T)$. 

\begin{example}\label{Eg: modified lattice path}
Let us revisit \cref{Eg: lattice path}.
By applying \cref{alg: tilde Gamma} to each $i \in \calI(T)$, we obtain $\tGam_{17}$, $\tGam_{21}$, $\tGam_{27}$, and $\tGam_{29}$ as follows:
\begin{displaymath}
\def\pp {0.512}
\def\hp {5.5}
\scalebox{0.75}{$
\begin{tikzpicture}
\node at (0 - \hp, 0) {$
\begin{ytableau}
1  & 6  & 10 & 14 & 22 & 24 & 26 & 27 \\
2  & 7  & 11 & 15 & 23 & 25 & 29    \\
3  & 8  & 12 & 16 & 28 & 29         \\
4  & 9  & 13 & \color{red} 17                      \\
5  & \color{red} 17 & 27                          \\
18 & 20                                \\
19 & 21                             \\
21   
\end{ytableau}
$};

\draw[line width = 0.5mm, color = red] 
(-4*\pp - \hp, -1*\pp) -- (-3*\pp - \hp, -1*\pp) -- (-3*\pp - \hp, 0*\pp) -- (-1*\pp - \hp, 0*\pp) -- (-1*\pp - \hp, 1*\pp) -- (0*\pp - \hp, 1*\pp);

\draw[line width = 0.5mm, color = red] 
(0*\pp - \hp, 1*\pp) -- (0*\pp - \hp, 2*\pp) -- (2*\pp - \hp, 2*\pp) -- (2*\pp - \hp, 3*\pp) -- (3*\pp - \hp, 3*\pp) -- (3*\pp - \hp, 4*\pp) -- (4*\pp - \hp, 4*\pp);

\node[] at (-\hp, -2.5) {\large $\tGam_{17}(T)$};

\node at (0,0) {$
\begin{ytableau}
1  & 6  & 10 & 14 & 22 & 24 & 26 & 27 \\
2  & 7  & 11 & 15 & 23 & 25 & 29    \\
3  & 8  & 12 & 16 & 28 & 29         \\
4  & 9  & 13 & 17                      \\
5 & 17 & 27                          \\
18 & 20                                \\
19 & \color{violet} 21                             \\
\color{violet} 21                                     
\end{ytableau}
$};

\draw[line width = 0.5mm, color = red] (-4*\pp, -4*\pp) -- (-4*\pp, -3*\pp) -- (-3*\pp, -3*\pp) -- (-3*\pp, -2*\pp) -- (-2*\pp, -2*\pp);
\node[] at (0, -2.5) {\large $\tGam_{21}(T)$};

\node at (0 + \hp,0) {$
\begin{ytableau}
1  & 6  & 10 & 14 & 22 & 24 & 26 & \color{blue} 27 \\
2  & 7  & 11 & 15 & 23 & 25 & 29    \\
3  & 8  & 12 & 16 & 28 & 29         \\
4 & 9  & 13 & 17                      \\
5 & 17 & \color{blue} 27                          \\
18 & 20                                \\
19 & 21                             \\
21                                 
\end{ytableau}
$};

\draw[line width = 0.5mm, color = red] 
(-4*\pp + \hp, -1*\pp) -- (-2*\pp + \hp, -1*\pp) -- (-2*\pp + \hp, 0*\pp) -- (0*\pp + \hp, 0*\pp) -- (0*\pp + \hp, 2*\pp) -- (2*\pp + \hp, 2*\pp) -- (2*\pp + \hp, 3*\pp) -- (3*\pp + \hp, 3*\pp) -- (3*\pp + \hp, 4*\pp) -- (4*\pp + \hp, 4*\pp);
\node[] at (\hp, -2.5) {\large $\tGam_{27}(T)$};

\node at (0 + 2*\hp, 0) {$
\begin{ytableau}
1  & 6  & 10 & 14 & 22 & 24 & 26 & 27 \\
2  & 7  & 11 & 15 & 23 & 25 & \color{brown} 29    \\
3  & 8  & 12 & 16 & 28 & \color{brown} 29         \\
4  & 9  & 13 & 17                      \\
5  & 17 & 27                          \\
18 & 20                                \\
19 & 21                             \\
21                                     
\end{ytableau}
$};

\draw[line width = 0.5mm, color = red] 
(-4*\pp + 2*\hp, 1*\pp) -- (1*\pp + 2*\hp, 1*\pp) -- (1*\pp + 2*\hp, 2*\pp) -- (2*\pp + 2*\hp, 2*\pp) -- (2*\pp + 2*\hp, 3*\pp) -- (4*\pp + 2*\hp, 3*\pp);
\node[] at (2*\hp, -2.5) {\large $\tGam_{29}(T)$};
\end{tikzpicture}
$}
\end{displaymath}
\end{example}

For convenience, we introduce some terminologies related to $\tGam_i$'s.
For any $(r,c) \in \tyd(\lambda)$ and $i \in \calI(T)$, we say that \emph{$(r,c)$ is below} $\tGam_i$ if there exists $0 \le r' < r$ such that $\grco{r'}{c-1}, \grco{r'}{c} \in V(\tGam_i)$.
Otherwise, we say that \emph{$(r,c)$ is above} $\tGam_i$.
For each $i \in \calI(T)$, we call the path
$\tHL(\grco{r_\sfb^{(i)}}{0}, \grco{r_\sfb^{(i)}}{c_\sfb^{(i)} - 1})$ the \emph{bottom path of $\tGam_i$}.
Given $i,j \in \calI(T)$, if there exist $\grco{r'}{c'}, \grco{r''}{c''} \in V(\tGam_{j})$ such that $r' < r^{(i)}_\sfb < r''$ and $c',c'' < c_\sfb^{(i)}$, then we say that \emph{$\tGam_{j}$ crosses the bottom path of $\tGam_{i}$}.

In order to enumerate the lattice paths $\tGam_i$'s in appropriate order, to each $i \in \calI(T)$, 
we will give a label $\sfp_T(i) \in \{1,2,\ldots, |\calI(T)| \}$.
To do this, for $i \in \calI(T)$, we set $\sfp'_i \in \{1,2,\ldots, |\calI(T)|\}$ satisfying the following:
Let $i,j \in \calI(T)$.
\begin{enumerate}[label = {\bf C\arabic*.}]
\item If $r_\sfb^{(i)} < r_\sfb^{(j)}$, then $\sfp'_i < \sfp'_j$.
\item If $r_\sfb^{(i)} > r_\sfb^{(j)}$, then $\sfp'_i > \sfp'_j$.
\item When $r_\sfb^{(i)} = r_\sfb^{(j)}$, consider the lowest lattice point $p \in V(\tGam_i) \cap V(\tGam_j)$ such that neither $p + \grco{-1}{0}$ nor $p + \grco{0}{1}$ are contained in $V(\tGam_i) \cap V(\tGam_j)$.
If $p + \grco{-1}{0} \in V(\tGam_i)$, then $\sfp'_i < \sfp'_j$. Otherwise, $\sfp'_i > \sfp'_j$.
\end{enumerate}
We notice that $\{\sfp'_{i} \mid i\in \calI(T)\} = \{1,2,\ldots, |\calI(T)|\}$. 
By rearranging $\sfp_i'$'s with the following algorithm,
we define a function $\sfp_T: \calI(T) \ra \{1,2,\ldots, |\calI(T)|\}$.

\begin{algorithm}\label{alg: tsfp} 
For each $i \in \calI(T)$, let $p_i := \sfp'_i$, where $\sfp'_i$ is the index defined above.
\begin{enumerate}[label = {\it Step \arabic*.}]
\item Let $k = 1$.
\item Take $i_k$ and $i_{k+1}$ in $\calI(T)$ such that $p_{i_k} = k$ and $p_{i_{k+1}} = k+1$.
\item If $\tGam_{i_{k+1}}$ crosses the bottom path of $\tGam_{i_{k}}$, then set $p_{i_k} := k + 1$ and $p_{i_{k+1}} := k$ and go to {\it Step 1}. Otherwise, go to {\it Step 4}.
\item If $k < |\calI(T)| - 1$, then set $k = k+1$ and go to {\it Step 2}. 
Otherwise, set $\sfp_T(i) := p_i$ for each $i\in \calI(T)$ and go to {\it Step 5}. 
\item Return $(\sfp_T(i))_{i \in \calI(T)}$ and terminate the algorithm.
\end{enumerate}
\end{algorithm}

By the construction of $\sfp_T$, it is clear that $\sfp_T$ is a bijection.

\begin{lemma}\label{lem: sfpT for source}
Given a source tableau $T$, enumerate the elements of $\calI(T)$ in increasing order $j_1 < j_2 < \cdots < j_{|\calI(T)|}$.
Then, $\sfp_T(j_u) = u$ for all $1 \le u \le |\calI(T)|$.
\end{lemma}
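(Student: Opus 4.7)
The plan is to reduce the lemma to a geometric non-crossing property of the extended lattice paths $\tGam_i$, and then bubble-sort along \cref{alg: tsfp}. First I would invoke \cref{Lem: source and sink cond}(1): since $T$ is a source tableau, for every $i\in[1,m-1]\setminus\Des(T)$ the box right-adjacent to $\Top_i(T)$ holds $i+1$. A direct consequence is that whenever $i\notin\Des(T)$ and $i+1\in\calI(T)$, one has $\Bot_{i+1}(T)=(r_\sft^{(i)}, c_\sft^{(i)}+1)$, so $r_\sfb^{(i+1)}\le r_\sfb^{(i)}$ and $c_\sfb^{(i+1)}>c_\sfb^{(i)}$; this rigid link between $\Bot_{i+1}$ and $\Top_i$ feeds into the geometric analysis.

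The central claim I would establish is the following non-crossing property: for $i<j$ in $\calI(T)$, the path $\tGam_j$ does not cross the bottom path of $\tGam_i$. I would split on how $r_\sfb^{(j)}$ compares with $r_\sfb^{(i)}$. When $r_\sfb^{(j)}\le r_\sfb^{(i)}$, the up-right monotonicity of $\tGam_j$ (which starts at $\grco{r_\sfb^{(j)}}{0}$) confines $V(\tGam_j)$ to rows $\le r_\sfb^{(j)}\le r_\sfb^{(i)}$, so the \textquotedblleft $r''>r_\sfb^{(i)}$\textquotedblright\ half of the crossing condition cannot hold. When $r_\sfb^{(j)}>r_\sfb^{(i)}$, I would argue that no vertical segment of $\tGam_j$ in column $c<c_\sfb^{(i)}$ can straddle row $r_\sfb^{(i)}$: such a segment would use the edge from $\grco{r_\sfb^{(i)}}{c}$ to $\grco{r_\sfb^{(i)}-1}{c}$, and condition (ii) of the defining $\Gamma$ (whether $\Gamma_j$ itself, or the piece $\Gamma_{j_0}$ spliced into $\tGam_j$ by \cref{alg: tilde Gamma} Step~4) would force $T((r_\sfb^{(i)}, c+1))\ge j$; but row-monotonicity of $T$ combined with $c+1\le c_\sfb^{(i)}$ gives $T((r_\sfb^{(i)}, c+1))\le T(\Bot_i)=i<j$, a contradiction. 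Hence $\tGam_j$ cannot reach a lattice point with row $<r_\sfb^{(i)}$ inside the column range $[0, c_\sfb^{(i)}-1]$, and the crossing condition fails.

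With the non-crossing property secured, I would plug it into \cref{alg: tsfp}. A Step~3 swap triggers precisely when $\tGam_{i_{k+1}}$ crosses the bottom path of $\tGam_{i_k}$, so by the property $i_{k+1}<i_k$, meaning each swap strictly decreases the number of value-order inversions in the current configuration $p$. The algorithm therefore terminates, and a bubble-sort argument (exploiting that every adjacent inversion actually encountered along the trajectory from the initial $\sfp'$ is in fact triggered) identifies the terminal configuration as the value-order labeling, giving $\sfp_T(j_u)=u$. The hard part will be the second case of the non-crossing property: I expect real work is needed to trace the rewirings in \cref{alg: tilde Gamma} and verify that any spliced piece $\Gamma_{j_0}$ has $j_0\ge j$, so that the row-monotonicity estimate still closes the contradiction.
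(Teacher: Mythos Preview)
Your outline has a genuine gap at the bubble-sort step. The non-crossing property you state (for $i<j$ in $\calI(T)$, $\tGam_j$ does not cross the bottom path of $\tGam_i$) only gives you one direction: every swap performed by \cref{alg: tsfp} reduces the number of value inversions. It does \emph{not} tell you that the algorithm removes all inversions. At termination, Step~3 merely guarantees that for each adjacent pair $(i_k,i_{k+1})$ the path $\tGam_{i_{k+1}}$ fails to cross the bottom path of $\tGam_{i_k}$; your non-crossing property does not let you conclude $i_k<i_{k+1}$ from this. Your parenthetical assertion ``every adjacent inversion actually encountered \ldots\ is in fact triggered'' is precisely the missing positive statement (for $i<j$ adjacent in the current $p$, $\tGam_i$ \emph{does} cross the bottom path of $\tGam_j$), and you give no argument for it. Without it, the terminal configuration need not be sorted.

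The paper closes this gap by working pairwise with \emph{consecutive} elements $j_u<j_{u+1}$ of $\calI(T)$ and proving both directions. In the case $r_\sfb^{(j_u)}>r_\sfb^{(j_{u+1})}$ it uses \cref{Lem: source and sink cond}(1), together with the fact that no element of $\calI(T)$ lies strictly between $j_u$ and $j_{u+1}$, to locate $\Top_{j_u}(T)$ above $\tGam_{j_{u+1}}$ and hence show that $\tGam_{j_u}$ crosses the bottom path of $\tGam_{j_{u+1}}$. It then pinpoints the exact moment in \cref{alg: tsfp} where the swap occurs, after which the (one-directional) non-crossing fact you do have keeps the pair in order. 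So the positive direction is where the source hypothesis is really spent, and it genuinely uses consecutiveness; your general $i<j$ framework obscures this.

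A secondary concern: your argument for the non-crossing property in the case $r_\sfb^{(j)}>r_\sfb^{(i)}$ hinges on the spliced pieces in \cref{alg: tilde Gamma} satisfying $j_0\ge j$. That is not what the algorithm says: Step~4 takes $j_0$ to be the \emph{minimum} index whose $\Gamma'_{j_0}$ meets the crossing condition~\cref{Eq: top cross condition}, with no a~priori relation to the fixed index. You would need a separate argument (again leaning on the source hypothesis) to control the values along spliced vertical segments, and you have not supplied one.
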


\begin{proof}
We claim that $\sfp_T(j_{u}) < \sfp_T(j_{u+1})$ for all $1 \le u < |\calI(T)|$.
Given $1 \le u < |\calI(T)|$, we have two cases
\[
r_\sfb^{(j_u)} \le r_\sfb^{(j_{u+1})} 
\quad \text{and} \quad
r_\sfb^{(j_u)} > r_\sfb^{(j_{u+1})}.
\]

In case where $r_\sfb^{(j_u)} \le r_\sfb^{(j_{u+1})}$, we have that $\Bot_{j_u}(T)$ is above $\tGam_{j_{u+1}}$.
This implies that $\sfp'_{j_u}$ and $\sfp'_{j_{u+1}}$, defined in {\bf C1}-{\bf C3}, satisfy the inequality $\sfp'_{j_u} < \sfp'_{j_{u+1}}$.
By the construction of $\tGam_{j_{u}}$ and $\tGam_{j_{u+1}}$, $\tGam_{j_{u+1}}$ does not cross the bottom path of $\tGam_{j_{u}}$.
Thus, when applying \cref{alg: tsfp}, $\sfp'_{j_u}$ and $\sfp'_{j_{u+1}}$ are never swapped in {\it Step~3}.
This shows that $\sfp_T(j_u) < \sfp_T(j_{u+1})$.

In case where $r_\sfb^{(j_u)} > r_\sfb^{(j_{u+1})}$, we have that $\sfp'_{j_u} > \sfp'_{j_{u+1}}$.
It follows from~\cref{Lem: source and sink cond}(1) that for each $j_u < i \le j_{u+1}$, there exists a box in $T^{-1}(i)$ which appears weakly below $\Top_{j_u}(T)$, therefore $r_\sft^{(j_u)} \le r_{\sfb}^{(j_{u+1})}$.
By the construction of $\tGam_{j_{u+1}}$, each box $(r,c)$ which appears below $\tGam_{j_{u+1}}$ and satisfies $r \le r_{\sfb}^{(j_{u+1})}$ is filled with an integer greater than $j_{u+1}$.
This implies that $\Top_{j_u}(T)$ is above $\tGam_{j_{u+1}}$.
Combining this with the assumption $r_\sfb^{(j_u)} > r_\sfb^{(j_{u+1})}$, we have that $\tGam_{j_{u}}$ crosses the bottom path of $\tGam_{j_{u+1}}$.
Let
\begin{align*}
\calI_1 &:= \{i \in \calI(T) \mid i < j_u \text{ and } \sfp'_{j_{u+1}} < \sfp'_i < \sfp'_{j_u} \}, \\
\calI_2 &:= \{i \in \calI(T) \mid i > j_{u+1} \text{ and } \sfp'_{j_{u+1}} < \sfp'_i < \sfp'_{j_u} \},
\quad \text{and}\\
\calI_3 &:= \{i \in \calI(T) \mid \text{$\tGam_{j_{u+1}}$ crosses the bottom path of $\tGam_{i}$} \}.
\end{align*}
Note that $\sfp'_{j_{u}} = \sfp'_{j_{u+1}} + |\calI_1| + |\calI_2| + 1$.
One can see that, when applying~\cref{alg: tsfp}, we encounter the situation that
\[
k = \sfp'_{j_{u+1}} + |\calI_1| + |\calI_3|, 
\quad
i_k = {j_{u+1}},
\quad \text{and}\quad
i_{k+1} = {j_{u}}
\]
in {\it Step 2}.
In this situation, after applying {\it Step 3}, we have $p_{j_u} = k < k+1 = p_{j_{u+1}}$.
Since $\tGam_{j_{u+1}}$ cannot cross the bottom path of $\tGam_{j_{u}}$, the relative order $p_{j_u} < p_{j_{u+1}}$ does not change until the algorithm terminates.
Thus, we have that $\sfp_T(j_u) < \sfp_T(j_{u+1})$.

Since we have shown that $\sfp_T(j_u) < \sfp_T(j_{u+1})$ for all $1 \le u < |\calI(T)|$, we immediately have that
$\sfp_T(j_u) = u$ for all $1 \le u \le |\calI(T)|$.
\end{proof}

For convenience, we simply write the lattice path $\tGam_{\sfp^{-1}_{T}(u)}$ by $\tGam^{(u)}$ for $u \in [1,|\calI(T)|\,]$.
Given $u \in [1,|\calI(T)|\,]$, let $\sfA_{u}$ be the subdiagram of $\tyd(\lambda)$ consisting of the boxes located above $\tGam^{(u)}$. 
Then, define 
\begin{align}\label{Eq: sfD(i)}
\sfD_{u}^{(1)}(T) :=
\sfA_{u} 
\setminus \Big(\bigcup_{1 \le v < u} \left(\sfA_{v} \cup T^{-1}(\sfp_T^{-1}(v)) \right)\Big)
\quad \text{and} \quad
\sfD_{u}^{(2)}(T):= T^{-1}(\sfp_T^{-1}(u)).
\end{align}

\begin{example}\label{Eg: sfp and sfD compute}
Let us revisit \cref{Eg: lattice path} and \cref{Eg: modified lattice path}.
One can easily see that
\[
\sfp'_{17} = 2, \quad 
\sfp'_{21} = 4, \quad 
\sfp'_{27} = 3, \quad \text{and} \quad
\sfp'_{29} = 1.
\]
By applying~\cref{alg: tsfp}, one can compute $\sfp_T(i)$'s as \cref{Table: sfp compute}, where $\cdot$'s in the third and fourth columns are used to omit unnecessary information.
Consequently, we have
\[
\sfp_T(17) = 1, \quad 
\sfp_T(21) = 4, \quad 
\sfp_T(27) = 2, \quad \text{and} \quad
\sfp_T(29) = 3.
\]
We draw $\sfD_{u}^{(1)}(T)$ and $\sfD_{u}^{(2)}(T)$ for $u = 1,2,3,4$ in~\cref{Fig: sfD compute}. 
Here, asterisks and colored bullets are used to indicate the boxes in $\sfD_{u}^{(1)}(T)$ and $\sfD_{u}^{(2)}(T)$, respectively.
\end{example}

\begin{table}[t]
\renewcommand*\arraystretch{1.1}
\setlength{\tabcolsep}{9pt}
\centering
\begin{tabular}{c||c|c|c|c}
{\it Steps} & $k$ & $i_k$ & $i_{k+1}$ & $(p_{17}, p_{21}, p_{27}, p_{29})$  \\ \hline \hline
{\it Step 1} & 1 & $\cdot$ & $\cdot$ & $(2,4,3,1)$ \\ \hline
{\it Steps 2, 3} & 1 & 29 & 17 & $(1,4,3,2)$ \\ \hline
{\it Step 1} & 1 & $\cdot$ & $\cdot$ & $(1,4,3,2)$ \\ \hline
{\it Steps 2, 3} & 1 & 17 & 29 & $(1,4,3,2)$ \\ \hline
{\it Step 4} & 2 & $\cdot$ & $\cdot$ & $(1,4,3,2)$ \\ \hline
{\it Steps 2, 3} & 2 & 29 & 27 & $(1,4,2,3)$ \\ \hline
{\it Step 1} & 1 & $\cdot$ & $\cdot$ & $(1,4,2,3)$ \\ \hline
{\it Steps 2, 3} & 1 & 17 & 27 & $(1,4,2,3)$ \\ \hline
{\it Step 4} & 2 & $\cdot$ & $\cdot$ & $(1,4,2,3)$ \\ \hline
{\it Steps 2, 3} & 2 & 27 & 29 & $(1,4,2,3)$ \\ \hline
{\it Step 4} & 3 & $\cdot$ & $\cdot$ & $(1,4,2,3)$ \\ \hline
{\it Steps 2, 3} & 3 & 29 & 21 & $(1,4,2,3)$ \\ \hline
{\it Steps 4, 5} & 3 & $\cdot$ & $\cdot$ & $(1,4,2,3)$
\end{tabular}
\medskip
\caption{The process of obtaining $\sfp_T(i)$'s in~\cref{Eg: sfp and sfD compute}}
\label{Table: sfp compute}
\end{table}

\begin{figure}[t]
\begin{displaymath}
\def\pp {0.512}
\def\hp {5}
\def\sp {0.035}
\scalebox{0.75}{$
\begin{tikzpicture}
\node at (0 - \hp, 0) {$
\begin{ytableau}
\ast   & \ast  & \ast  & \ast  & \ast  & \ast  & \ast  & ~ \\
\ast   & \ast  & \ast  & \ast  & \ast  & \ast  & ~      \\
\ast   & \ast  & \ast  & \ast  & ~  & ~           \\
\ast   & \ast  & \ast  & \color{red} \bullet                     \\
\ast   & \color{red} \bullet  &  ~                         \\
~   & ~                               \\
~   & ~                               \\
~
\end{ytableau}
$};

\node at (-4.5*\pp - \hp - 0.1, -1*\pp + 0.2) {\color{red} \tiny $\tGam^{(1)}$};

\draw[line width = 0.5mm, color = red] 
(-4*\pp - \hp, -1*\pp + \sp) -- 
(-3*\pp - \hp - \sp, -1*\pp + \sp) -- 
(-3*\pp - \hp - \sp, 0*\pp + \sp) -- 
(-1*\pp - \hp - \sp, 0*\pp + \sp) -- 
(-1*\pp - \hp - \sp, 1*\pp + \sp) -- 
(0*\pp - \hp - \sp, 1*\pp + \sp) --
(0*\pp - \hp - \sp, 2*\pp + \sp) -- 
(2*\pp - \hp - \sp, 2*\pp + \sp) -- 
(2*\pp - \hp - \sp, 3*\pp + \sp) -- 
(3*\pp - \hp - \sp, 3*\pp + \sp) -- 
(3*\pp - \hp - \sp, 4*\pp + \sp) -- 
(4*\pp - \hp, 4*\pp + \sp);

\node at (-4.5*\pp - \hp - 0.1, -4*\pp) {\color{violet} \tiny $\tGam^{(4)}$};

\draw[line width = 0.5mm, color = violet] 
(-4*\pp - \hp, -4*\pp) -- 
(-4*\pp - \hp, -3*\pp) -- 
(-3*\pp - \hp, -3*\pp) -- 
(-3*\pp - \hp, -2*\pp) -- 
(-2*\pp - \hp, -2*\pp);

\node at (-4.5*\pp - \hp - 0.1, -1*\pp - 0.2) {\color{blue} \tiny $\tGam^{(2)}$};

\draw[line width = 0.5mm, color = blue] 
(-4*\pp - \hp, -1*\pp - \sp) -- 
(-2*\pp - \hp + \sp, -1*\pp - \sp) -- 
(-2*\pp - \hp + \sp, 0*\pp - \sp) -- 
(0*\pp - \hp + \sp, 0*\pp - \sp) -- 
(0*\pp - \hp + \sp, 2*\pp - \sp) -- 
(2*\pp - \hp + \sp, 2*\pp - \sp) -- 
(2*\pp - \hp + \sp, 3*\pp - \sp) -- 
(3*\pp - \hp + \sp, 3*\pp - \sp) -- 
(3*\pp - \hp + \sp, 4*\pp - \sp) -- 
(4*\pp - \hp, 4*\pp - \sp);

\node at (-4.5*\pp - \hp - 0.1, 1*\pp) {\color{brown} \tiny $\tGam^{(3)}$};

\draw[line width = 0.5mm, color = brown] 
(-4*\pp - \hp, 1*\pp) -- 
(1*\pp - \hp, 1*\pp) -- 
(1*\pp - \hp, 2*\pp) -- 
(2*\pp - \hp, 2*\pp) -- 
(2*\pp - \hp, 3*\pp) -- 
(4*\pp - \hp, 3*\pp);

\node[] at (-\hp, -3) {$\sfD_{1}^{(1)}(T)$ and $\sfD_{1}^{(2)}(T)$};

\node at (0, 0) {$
\begin{ytableau}
*(gray!70)   & *(gray!70)  & *(gray!70)  & *(gray!70)  & *(gray!70)  & *(gray!70)  & *(gray!70)  & \color{blue} \bullet \\
*(gray!70)   & *(gray!70)  & *(gray!70)  & *(gray!70)  & *(gray!70)  & *(gray!70)  & ~      \\
*(gray!70)   & *(gray!70)  & *(gray!70)  & *(gray!70)  & ~  & ~           \\
*(gray!70)   & *(gray!70)  & *(gray!70)  & *(gray!70)                     \\
*(gray!70)   & *(gray!70)  & \color{blue} \bullet                       \\
~   & ~                               \\
~   & ~                               \\
~
\end{ytableau}
$};


\draw[line width = 0.5mm, color = red] 
(-4*\pp , -1*\pp + \sp) -- 
(-3*\pp  - \sp, -1*\pp + \sp) -- 
(-3*\pp  - \sp, 0*\pp + \sp) -- 
(-1*\pp  - \sp, 0*\pp + \sp) -- 
(-1*\pp  - \sp, 1*\pp + \sp) -- 
(0*\pp  - \sp, 1*\pp + \sp) --
(0*\pp  - \sp, 2*\pp + \sp) -- 
(2*\pp  - \sp, 2*\pp + \sp) -- 
(2*\pp  - \sp, 3*\pp + \sp) -- 
(3*\pp  - \sp, 3*\pp + \sp) -- 
(3*\pp  - \sp, 4*\pp + \sp) -- 
(4*\pp , 4*\pp + \sp);


\draw[line width = 0.5mm, color = violet] 
(-4*\pp , -4*\pp) -- 
(-4*\pp , -3*\pp) -- 
(-3*\pp , -3*\pp) -- 
(-3*\pp , -2*\pp) -- 
(-2*\pp , -2*\pp);


\draw[line width = 0.5mm, color = blue] 
(-4*\pp , -1*\pp - \sp) -- 
(-2*\pp  + \sp, -1*\pp - \sp) -- 
(-2*\pp  + \sp, 0*\pp - \sp) -- 
(0*\pp  + \sp, 0*\pp - \sp) -- 
(0*\pp  + \sp, 2*\pp - \sp) -- 
(2*\pp  + \sp, 2*\pp - \sp) -- 
(2*\pp  + \sp, 3*\pp - \sp) -- 
(3*\pp  + \sp, 3*\pp - \sp) -- 
(3*\pp  + \sp, 4*\pp - \sp) -- 
(4*\pp , 4*\pp - \sp);


\draw[line width = 0.5mm, color = brown] 
(-4*\pp , 1*\pp) -- 
(1*\pp , 1*\pp) -- 
(1*\pp , 2*\pp) -- 
(2*\pp , 2*\pp) -- 
(2*\pp , 3*\pp) -- 
(4*\pp , 3*\pp);

\node[] at (0, -3) {$\sfD_{2}^{(1)}(T)$ and $\sfD_{2}^{(2)}(T)$};

\node at (0 + \hp, 0) {$
\begin{ytableau}
*(gray!70)   & *(gray!70)  & *(gray!70)  & *(gray!70)  & *(gray!70)  & *(gray!70)  & *(gray!70)  & *(gray!70) \\
*(gray!70)   & *(gray!70)  & *(gray!70)  & *(gray!70)  & *(gray!70)  & *(gray!70)  & \color{brown} \bullet      \\
*(gray!70)   & *(gray!70)  & *(gray!70)  & *(gray!70)  & \ast  & \color{brown} \bullet           \\
*(gray!70)   & *(gray!70)  & *(gray!70)  & *(gray!70)                     \\
*(gray!70)   & *(gray!70)  & *(gray!70)                        \\
~   & ~                               \\
~   & ~                               \\
~
\end{ytableau}
$};


\draw[line width = 0.5mm, color = red] 
(-4*\pp + \hp, -1*\pp + \sp) -- 
(-3*\pp + \hp - \sp, -1*\pp + \sp) -- 
(-3*\pp + \hp - \sp, 0*\pp + \sp) -- 
(-1*\pp + \hp - \sp, 0*\pp + \sp) -- 
(-1*\pp + \hp - \sp, 1*\pp + \sp) -- 
(0*\pp + \hp - \sp, 1*\pp + \sp) --
(0*\pp + \hp - \sp, 2*\pp + \sp) -- 
(2*\pp + \hp - \sp, 2*\pp + \sp) -- 
(2*\pp + \hp - \sp, 3*\pp + \sp) -- 
(3*\pp + \hp - \sp, 3*\pp + \sp) -- 
(3*\pp + \hp - \sp, 4*\pp + \sp) -- 
(4*\pp + \hp, 4*\pp + \sp);


\draw[line width = 0.5mm, color = violet] 
(-4*\pp + \hp, -4*\pp) -- 
(-4*\pp + \hp, -3*\pp) -- 
(-3*\pp + \hp, -3*\pp) -- 
(-3*\pp + \hp, -2*\pp) -- 
(-2*\pp + \hp, -2*\pp);


\draw[line width = 0.5mm, color = blue] 
(-4*\pp + \hp, -1*\pp - \sp) -- 
(-2*\pp + \hp + \sp, -1*\pp - \sp) -- 
(-2*\pp + \hp + \sp, 0*\pp - \sp) -- 
(0*\pp + \hp + \sp, 0*\pp - \sp) -- 
(0*\pp + \hp + \sp, 2*\pp - \sp) -- 
(2*\pp + \hp + \sp, 2*\pp - \sp) -- 
(2*\pp + \hp + \sp, 3*\pp - \sp) -- 
(3*\pp + \hp + \sp, 3*\pp - \sp) -- 
(3*\pp + \hp + \sp, 4*\pp - \sp) -- 
(4*\pp + \hp, 4*\pp - \sp);


\draw[line width = 0.5mm, color = brown] 
(-4*\pp + \hp, 1*\pp) -- 
(1*\pp + \hp, 1*\pp) -- 
(1*\pp + \hp, 2*\pp) -- 
(2*\pp + \hp, 2*\pp) -- 
(2*\pp + \hp, 3*\pp) -- 
(4*\pp + \hp, 3*\pp);

\node[] at (\hp, -3) {$\sfD_{3}^{(1)}(T)$ and $\sfD_{3}^{(2)}(T)$};

\node at (0 + 2*\hp, 0) {$
\begin{ytableau}
*(gray!70)   & *(gray!70)  & *(gray!70)  & *(gray!70)  & *(gray!70)  & *(gray!70)  & *(gray!70)  & *(gray!70) \\
*(gray!70)   & *(gray!70)  & *(gray!70)  & *(gray!70)  & *(gray!70)  & *(gray!70)  & *(gray!70)      \\
*(gray!70)   & *(gray!70)  & *(gray!70)  & *(gray!70)  & *(gray!70)  & *(gray!70)           \\
*(gray!70)   & *(gray!70)  & *(gray!70)  & *(gray!70)                     \\
*(gray!70)   & *(gray!70)  & *(gray!70)                        \\
\ast   & \ast                               \\
\ast   & \color{violet} \bullet                               \\
\color{violet} \bullet
\end{ytableau}
$};


\draw[line width = 0.5mm, color = red] 
(-4*\pp + 2*\hp, -1*\pp + \sp) -- 
(-3*\pp + 2*\hp - \sp, -1*\pp + \sp) -- 
(-3*\pp + 2*\hp - \sp, 0*\pp + \sp) -- 
(-1*\pp + 2*\hp - \sp, 0*\pp + \sp) -- 
(-1*\pp + 2*\hp - \sp, 1*\pp + \sp) -- 
(0*\pp + 2*\hp - \sp, 1*\pp + \sp) --
(0*\pp + 2*\hp - \sp, 2*\pp + \sp) -- 
(2*\pp + 2*\hp - \sp, 2*\pp + \sp) -- 
(2*\pp + 2*\hp - \sp, 3*\pp + \sp) -- 
(3*\pp + 2*\hp - \sp, 3*\pp + \sp) -- 
(3*\pp + 2*\hp - \sp, 4*\pp + \sp) -- 
(4*\pp + 2*\hp, 4*\pp + \sp);


\draw[line width = 0.5mm, color = violet] 
(-4*\pp + 2*\hp, -4*\pp) -- 
(-4*\pp + 2*\hp, -3*\pp) -- 
(-3*\pp + 2*\hp, -3*\pp) -- 
(-3*\pp + 2*\hp, -2*\pp) -- 
(-2*\pp + 2*\hp, -2*\pp);


\draw[line width = 0.5mm, color = blue] 
(-4*\pp + 2*\hp, -1*\pp - \sp) -- 
(-2*\pp + 2*\hp + \sp, -1*\pp - \sp) -- 
(-2*\pp + 2*\hp + \sp, 0*\pp - \sp) -- 
(0*\pp + 2*\hp + \sp, 0*\pp - \sp) -- 
(0*\pp + 2*\hp + \sp, 2*\pp - \sp) -- 
(2*\pp + 2*\hp + \sp, 2*\pp - \sp) -- 
(2*\pp + 2*\hp + \sp, 3*\pp - \sp) -- 
(3*\pp + 2*\hp + \sp, 3*\pp - \sp) -- 
(3*\pp + 2*\hp + \sp, 4*\pp - \sp) -- 
(4*\pp + 2*\hp, 4*\pp - \sp);


\draw[line width = 0.5mm, color = brown] 
(-4*\pp + 2*\hp, 1*\pp) -- 
(1*\pp + 2*\hp, 1*\pp) -- 
(1*\pp + 2*\hp, 2*\pp) -- 
(2*\pp + 2*\hp, 2*\pp) -- 
(2*\pp + 2*\hp, 3*\pp) -- 
(4*\pp + 2*\hp, 3*\pp);

\node[] at (2*\hp, -3) {$\sfD_{4}^{(1)}(T)$ and $\sfD_{4}^{(2)}(T)$};
\end{tikzpicture}
$}
\end{displaymath}
\caption{$\sfD_{u}^{(1)}(T)$ and $\sfD_{u}^{(2)}(T)$ for $u = 1,2,3,4$ in~\cref{Eg: sfp and sfD compute}}
\label{Fig: sfD compute}
\end{figure}

Now, we construct the desired tableau $\source(T)$ with the following algorithm.

\begin{algorithm}\label{Alg: construct source of T}
Let $T \in \IGLTm{\lambda}{m}$.
Set $e_0 = 0$ and $M_0 = 0$.
For $1 \le u \le |\calI(T)|$, let $e_u := |\sfD_u^{(1)}(T)| + 1$ and $M_u = \sum_{v = 0}^{u} e_v$.
\begin{enumerate}[label = {\it Step \arabic*.}]
\item Set $v := 1$.
\item Fill the boxes in $\sfD_v^{(1)}(T)$ by $M_{v-1} + 1, M_{v-1} + 2, \ldots, M_{v-1} + e_v - 1$ from left to right starting from the top.
\item Fill the boxes in $\sfD_v^{(2)}(T)$ by $M_v$.
\item If $v < |\calI(T)|$, then set $v := v + 1$ and go to {\it Step 2}. 
Otherwise, fill the remaining boxes by $M_{|\calI(T)|} + 1, M_{|\calI(T)|} + 2,\ldots, m$ from left to right starting from the top.
Set $\source(T)$ to be the resulting filling.
Return $\source(T)$ and terminate the algorithm.
\end{enumerate}
\end{algorithm}

\begin{example}
Revisit \cref{Eg: sfp and sfD compute}.
We see that
\[
\def\pp {0.512}
\def\hp {12}
\def\sp {0.035}
\scalebox{0.75}{$
\begin{tikzpicture}

\node at (-3 - \hp,0) {\fontsize{15.9}{15.9} $T \ \  = \quad $};
\node at (-\hp, 0) {$
\begin{ytableau}
1  & 6  & 10 & 14 & 22 & 24 & 26 & \color{blue} 27 \\
2  & 7  & 11 & 15 & 23 & 25 & \color{brown} 29    \\
3  & 8  & 12 & 16 & 28 & \color{brown} 29         \\
4  & 9  & 13 & \color{red} 17                      \\
5  &\color{red} 17 & \color{blue} 27              \\
18 & 20                                \\
19 & \color{violet} 21                             \\
\color{violet} 21                                     
\end{ytableau}
$};


\draw[line width = 0.5mm, color = red] 
(-4*\pp - \hp , -1*\pp + \sp) -- 
(-3*\pp - \hp  - \sp, -1*\pp + \sp) -- 
(-3*\pp - \hp  - \sp, 0*\pp + \sp) -- 
(-1*\pp - \hp  - \sp, 0*\pp + \sp) -- 
(-1*\pp - \hp  - \sp, 1*\pp + \sp) -- 
(0*\pp  - \hp - \sp, 1*\pp + \sp) --
(0*\pp  - \hp - \sp, 2*\pp + \sp) -- 
(2*\pp  - \hp - \sp, 2*\pp + \sp) -- 
(2*\pp  - \hp - \sp, 3*\pp + \sp) -- 
(3*\pp  - \hp - \sp, 3*\pp + \sp) -- 
(3*\pp  - \hp - \sp, 4*\pp + \sp) -- 
(4*\pp  - \hp, 4*\pp + \sp);


\draw[line width = 0.5mm, color = violet] 
(-4*\pp - \hp, -4*\pp) -- 
(-4*\pp - \hp, -3*\pp) -- 
(-3*\pp - \hp, -3*\pp) -- 
(-3*\pp - \hp, -2*\pp) -- 
(-2*\pp - \hp, -2*\pp);


\draw[line width = 0.5mm, color = blue] 
(-4*\pp - \hp, -1*\pp - \sp) -- 
(-2*\pp - \hp + \sp, -1*\pp - \sp) -- 
(-2*\pp - \hp + \sp, 0*\pp - \sp) -- 
(0*\pp  - \hp+ \sp, 0*\pp - \sp) -- 
(0*\pp  - \hp+ \sp, 2*\pp - \sp) -- 
(2*\pp  - \hp+ \sp, 2*\pp - \sp) -- 
(2*\pp  - \hp+ \sp, 3*\pp - \sp) -- 
(3*\pp  - \hp+ \sp, 3*\pp - \sp) -- 
(3*\pp  - \hp+ \sp, 4*\pp - \sp) -- 
(4*\pp  - \hp, 4*\pp - \sp);


\draw[line width = 0.5mm, color = brown] 
(-4*\pp- \hp , 1*\pp) -- 
(1*\pp - \hp, 1*\pp) -- 
(1*\pp - \hp, 2*\pp) -- 
(2*\pp - \hp, 2*\pp) -- 
(2*\pp - \hp, 3*\pp) -- 
(4*\pp - \hp, 3*\pp);

\node at (-7.5,0.5) {\small \cref{Alg: construct source of T}};
\draw[->] (-8.5,0) -- (-6.5,0);

\node at (-3.85,0) {\fontsize{15.9}{15.9} $\source(T) \  = \quad $};
\node at (2.5,0) {.};
\node at (0, 0) {$
\begin{ytableau}
1  & 2 & 3 & 4 & 5 & 6 & 7 & \color{blue} 23 \\
8  & 9 & 10& 11& 12& 13& \color{brown} 25      \\
14 & 15& 16& 17& 24  & \color{brown} 25           \\
18 & 19& 20& \color{red} 22                     \\
21 & \color{red} 22 & \color{blue} 23    \\
26   & 27                               \\
28   & \color{violet} 29                               \\
\color{violet} 29
\end{ytableau}
$};


\draw[line width = 0.5mm, color = red] 
(-4*\pp , -1*\pp + \sp) -- 
(-3*\pp  - \sp, -1*\pp + \sp) -- 
(-3*\pp  - \sp, 0*\pp + \sp) -- 
(-1*\pp  - \sp, 0*\pp + \sp) -- 
(-1*\pp  - \sp, 1*\pp + \sp) -- 
(0*\pp  - \sp, 1*\pp + \sp) --
(0*\pp  - \sp, 2*\pp + \sp) -- 
(2*\pp  - \sp, 2*\pp + \sp) -- 
(2*\pp  - \sp, 3*\pp + \sp) -- 
(3*\pp  - \sp, 3*\pp + \sp) -- 
(3*\pp  - \sp, 4*\pp + \sp) -- 
(4*\pp , 4*\pp + \sp);


\draw[line width = 0.5mm, color = violet] 
(-4*\pp , -4*\pp) -- 
(-4*\pp , -3*\pp) -- 
(-3*\pp , -3*\pp) -- 
(-3*\pp , -2*\pp) -- 
(-2*\pp , -2*\pp);


\draw[line width = 0.5mm, color = blue] 
(-4*\pp , -1*\pp - \sp) -- 
(-2*\pp  + \sp, -1*\pp - \sp) -- 
(-2*\pp  + \sp, 0*\pp - \sp) -- 
(0*\pp  + \sp, 0*\pp - \sp) -- 
(0*\pp  + \sp, 2*\pp - \sp) -- 
(2*\pp  + \sp, 2*\pp - \sp) -- 
(2*\pp  + \sp, 3*\pp - \sp) -- 
(3*\pp  + \sp, 3*\pp - \sp) -- 
(3*\pp  + \sp, 4*\pp - \sp) -- 
(4*\pp , 4*\pp - \sp);


\draw[line width = 0.5mm, color = brown] 
(-4*\pp , 1*\pp) -- 
(1*\pp , 1*\pp) -- 
(1*\pp , 2*\pp) -- 
(2*\pp , 2*\pp) -- 
(2*\pp , 3*\pp) -- 
(4*\pp , 3*\pp);

\end{tikzpicture}$}
\]
\end{example}

Let us collect some useful facts for $\source(T)$ which can be easily seen.
\begin{enumerate}[label = {\bf S\arabic*.}]
\item By \cref{Lem: source and sink cond}(1), for any $T \in \IGLTm{\lambda}{m}$, $\source(T)$ is a source tableau.
\item For any $T \in \IGLTm{\lambda}{m}$, $T \sim \source(T)$ by the construction of $\source(T)$.
\item By the construction of $\source(T)$, the set $\left\{ \left(\Gamma_i(T), T^{-1}(i) \right) \; \middle| \; i \in \calI(T) \right \}$ determines $\source(T)$.
In other words, if $T_1 \sim T_2$, then $\source(T_1) = \source(T_2)$.
\end{enumerate}

Combining the facts {\bf S1} and {\bf S2} shows the existence of source tableaux in $E$.
However, the above facts {\bf S1}, {\bf S2}, and {\bf S3} do not guarantee the uniqueness of source tableaux in $E$. 
To show the uniqueness, we need the lemma below.

\begin{lemma}\label{lem: tS(T)}
For any source tableau $T$, $\source(T) = T$.
\end{lemma}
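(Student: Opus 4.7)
The plan is to invoke \cref{lem: sfpT for source} and then trace through \cref{Alg: construct source of T} iteration by iteration, showing that at every step the filling it produces coincides with $T$. First I would enumerate $\calI(T)$ in increasing order as $j_1 < j_2 < \cdots < j_r$. By \cref{lem: sfpT for source}, $\sfp_T(j_u) = u$, so $\sfD_u^{(2)}(T) = T^{-1}(j_u)$. The integers strictly between $j_{u-1}$ and $j_u$ (setting $j_0 := 0$) lie outside $\calI(T)$, so they are non-repeating in $T$; hence there are exactly $j_u - j_{u-1} - 1$ such values, which will match the predicted $e_u - 1 = |\sfD_u^{(1)}(T)|$ and force $M_u = j_u$.

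Next I would prove by induction on $u$ that after the $u$th pass of the loop the algorithm's partial filling agrees with $T$ on $\bigcup_{v \le u} (\sfD_v^{(1)}(T) \cup \sfD_v^{(2)}(T))$. The inductive step rests on two claims. The first is the structural identity $\sfD_u^{(1)}(T) = T^{-1}([j_{u-1}+1, j_u - 1])$, i.e., the boxes above $\tGam^{(u)}$ that were not filled in earlier iterations are precisely those carrying the values of the $u$th non-repeating block of $T$. The second is that within such a block the source property forces those values to be placed in row-reading order: for consecutive non-repeating values $i, i+1$ with $i+1 \le j_u - 1$, \cref{Lem: source and sink cond}(1) gives either $i \notin \Des(T)$, in which case $i+1$ sits immediately to the right of the unique box containing $i$, or $i \in \Des(T)$, in which case $i+1$ begins at the leftmost available position in a strictly lower row. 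Iterating these two alternatives reproduces the ``left-to-right starting from the top'' rule used in \emph{Step 2} of the algorithm, after which \emph{Step 3} correctly places $M_u = j_u$ into $\sfD_u^{(2)}(T) = T^{-1}(j_u)$. \emph{Step 4}, which fills the remaining boxes with $j_r + 1, \ldots, m$, is handled by the same argument applied to the final non-repeating tail.

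The main obstacle will be establishing the structural identity $\sfD_u^{(1)}(T) = T^{-1}([j_{u-1}+1, j_u - 1])$. This requires a careful analysis of the extended lattice paths $\tGam^{(u)}$: one must verify that the modifications performed in \emph{Steps 3--4} of \cref{alg: tilde Gamma} never misclassify a box whose $T$-entry lies in $[j_{u-1}+1, j_u - 1]$, and that the reordering performed by \cref{alg: tsfp} produces an enumeration of the paths under which the boxes in $T^{-1}(j_v)$ for $v < u$ sit above $\tGam^{(u)}$ whenever they should. Combining \cref{lem: sfpT for source} with the source property should yield a consistent nesting of $\tGam^{(1)}, \tGam^{(2)}, \ldots, \tGam^{(r)}$ that makes this bookkeeping work out, after which the two claims above fall into place simultaneously and the induction closes.
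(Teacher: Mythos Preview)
Your proposal is correct and follows essentially the same approach as the paper: invoke \cref{lem: sfpT for source} to get $\sfp_T(j_u)=u$ and hence $\sfD_u^{(2)}(T)=T^{-1}(j_u)$, establish the key structural identity $\sfD_u^{(1)}(T)=T^{-1}([j_{u-1}+1,j_u-1])$, and then use the source characterization \cref{Lem: source and sink cond}(1) to conclude that these boxes are filled in row-reading order, matching \cref{Alg: construct source of T}. The paper proves the structural identity via the two inclusions (showing each box carrying a value in $[j_{u-1}+1,j_u-1]$ lies in $\sfA_u$ but not in any earlier $\sfA_v$, and conversely), which is exactly the ``main obstacle'' you correctly flag.
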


\begin{proof}
Let $j_{0} = 0$ and $
\calI(T) = \{j_1 < j_2 < \cdots < j_{|\calI(T)|}\}$.
By \cref{lem: sfpT for source}, $\sfp_T(j_u) = u$ for all $1 \le u \le |\calI(T)|$, thus from the definition of $\sfD^{(2)}_u(T)$ we have that
\begin{align}\label{Eq: D(2)uT}
\sfD^{(2)}_u(T) = T^{-1}(j_u)  \quad \text{for all $1 \le u \le |\calI(T)|$.}
\end{align}

We claim that 
\begin{align}\label{eq: Du1 claim}
\sfD^{(1)}_{u}(T) = 
T^{-1}([j_{u-1} + 1, j_{u}-1]) \quad \text{for all $1 \le u \le |\calI(T)|$.}
\end{align}
First, let us prove the inclusion
$\sfD^{(1)}_{u}(T) \supseteq
T^{-1}([j_{u-1} + 1, j_{u}-1])$ for all $1 \le u \le |\calI(T)|$.
Take any $1 \le u \le |\calI(T)|$ and $i \in [j_{u-1} + 1, j_{u}-1]$. 
Let $B$ be the box filled with $i$ in $T$.
Recall that for any $1 \le v \le |\calI(T)|$, $\sfA_{v}$ is defined to be the subdiagram of $\tyd(\lambda)$ consisting of the boxes located above $\tGam^{(v)}$.
By the definition of $\sfD^{(1)}_{u}(T)$, the desired inclusion is obtained by proving that 
\[
B \in \sfA_u
\quad \text{and} \quad 
B \notin \sfA_v \quad \text{for all $1 \le v < u$.}
\]

Suppose for the sake of contradiction that $B \notin \sfA_u$.
Combining the fact that $T$ is an increasing tableau with the inequality $i < j_u$, we have that $B$ is strictly left of $\Bot_{j_u}(T)$.
This implies that $B$ is strictly below $\Bot_{j_u}(T)$.
By \cref{Lem: source and sink cond}(1), $i+1$ appears weakly below than $i$.
Again, by \cref{Lem: source and sink cond}(1), $i+2$ appears weakly below than $i+1$, so $i+2$ is weakly below than $i$.
Continuing this process, we see that $j_{u}$ appears weakly below than $i$, that is, $\Bot_{j_{u}}(T)$ is weakly below $B$.
This contradicts the above observation that $B$ is strictly below $\Bot_{j_u}(T)$.
Thus, $B \in \sfA_u$.

Suppose for the sake of contradiction that $B \in \sfA_v$ for some $1 \le v < u$.
Since $i > j_v$, $B$ cannot be placed weakly left of $\Top_{j_v}(T)$ while being above $\tGam_v$.
Therefore, $B$ is strictly right of $\Top_{j_v}(T)$.
In addition, by \cref{lem: sfpT for source}, $\sfp_T(j_v) = v \le u-1 = \sfp_T(j_{u-1})$, so the boxes strictly right of $\Top_{j_v}(T)$ while being above $\tGam_v$ are placed above $\tGam^{(u-1)}$.
Therefore, if we prove that $B$ is below $\tGam^{(u-1)}$, then we obtain a contradiction to the assumption that $B \in \sfA_v$.
Assume that $B$ is above $\tGam^{(u-1)}$.
Since $i > j_{u-1}$, $B$ is strictly above $\Top_{j_{u-1}}(T)$ by the construction of $\tGam^{(u-1)}$.
On the other hand, by using \cref{Lem: source and sink cond}(1) repeatedly, one can see that there exists at least one $j_{u-1}$ which appears weakly above $i$.
It follows that $\Top_{j_{u-1}}(T)$ is weakly above $B$.
This gives a contradiction to the previous observation, thus $B \notin \sfA_v$.

Next, let us prove the inclusion
$\sfD^{(1)}_{u}(T) \subseteq
T^{-1}([j_{u-1} + 1, j_{u}-1])$ for all $1 \le u \le |\calI(T)|$.
Equivalently, we claim that
\[
\sfD^{(1)}_{u}(T) \cap T^{-1}(i) = \emptyset \quad \text{for any $i \in [1,m] \setminus [j_{u-1} + 1, j_{u}-1]$.}
\]
To prove this, choose arbitrary $1 \le u \le |\calI(T)|$ and $i \in [1,m] \setminus [j_{u-1} + 1, j_{u}-1]$.
Note that if $i \in \calI(T)$, then $\sfD^{(1)}_{u}(T) \cap T^{-1}(i) = \emptyset$ because $\bigcup_{1\le v \le |\calI(T)|} \sfD^{(2)}_v(T) = T^{-1}(\calI(T))$ and $\sfD^{(1)}_{u}(T) \cap \sfD^{(2)}_{v}(T) = \emptyset$ for all $1 \le v \le |\calI(T)|$.
Therefore, we may assume that $i \notin \calI(T)$.
Take $u_0 \in \{1,2,\ldots, |\calI(T)|\} \setminus \{u\}$ such that $i \in [j_{u_0-1} +1, j_{u_0} - 1]$.
Since the method of proof for the case where $u_0 > u$ is similar with that for the case where $u_0 < u$, we only prove the latter case.

Suppose that $u_0 < u$.
Let $B$ be the box filled with $i$.
If $i$ appears below $\tGam_{j_{u_0}}$, then $B$ is strictly below $\Bot_{j_{u_0}}(T)$ since $i < j_{u_0}$.
On the other hand, for all $i \le j < j_{u_0}$, combining the fact $j \notin \calI(T)$ with \cref{Lem: source and sink cond}(1) yields that
$\Bot_{j+1}(T)$ is placed weakly below the unique box filled with $j$.
It follows that $\Bot_{j_{u_0}}(T)$ is weakly below $B$.
This contradicts the previous observation that $B$ is strictly below $\Bot_{j_{u_0}}(T)$.
This implies that $i$ must appear above $\tGam_{j_{u_0}}$.
If $i$ appears above $\tGam_{j_{u_0}}$, then $B \in \sfA_{u_0}$.
Thus, $\sfD^{(1)}_{u}(T) \cap T^{-1}(i) = \emptyset$ by \cref{Eq: sfD(i)}.

Now, combining \cref{Lem: source and sink cond}(1) with the equations \cref{Eq: D(2)uT} and \cref{eq: Du1 claim}, we have that $\sfD^{(1)}_{u}(T)$ is filled with $j_{u-1} + 1, j_{u-1} +2, \ldots, j_{u}-1$ from left to right starting from the top for each $1 \le u \le |\calI(T)|$.
Note that 
$$
T\Big(\tyd(\lambda) \setminus \bigcup_{1 \le u \le |\calI(T)|} \big( \sfD^{(1)}_{u}(T) \cup \sfD^{(2)}_{u}(T) \big) \Big) = \{j_{|\calI(T)|} + 1, j_{|\calI(T)|} + 2, \ldots, m\}.
$$
Again, by \cref{Lem: source and sink cond}(1), we have that $\tyd(\lambda) \setminus \bigcup_{1 \le u \le |\calI(T)|} (\sfD^{(1)}_{u}(T) \cup \sfD^{(2)}_{u}(T))$ is filled with $j_{|\calI(T)|} + 1, j_{|\calI(T)|} + 2, \ldots, m$ from left to right starting from the top.
Hence, by the construction of $\source(T)$, we have that $T = \source(T)$.
\end{proof}

Now, we prove the main theorem of this subsection.

\begin{theorem}\label{thm: uniqueness of source}
For each $E \in \calE_{\lambda;m}$, there exists a unique source tableau in $E$.
\end{theorem}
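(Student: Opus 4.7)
My plan is to obtain this theorem as an essentially immediate corollary of the facts \textbf{S1}, \textbf{S2}, \textbf{S3} together with \cref{lem: tS(T)}, all of which have already been assembled. In particular, I do not expect any further delicate combinatorics to be necessary here.

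First I would establish existence. Pick any $T \in E$ and set $T^{\ast} := \source(T)$. By fact \textbf{S1}, $T^{\ast}$ is a source tableau, and by fact \textbf{S2}, $T^{\ast} \sim T$, so $T^{\ast}$ lies in $E$. This already exhibits a source tableau in $E$.

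For uniqueness, suppose $T_1, T_2 \in E$ are both source tableaux. Since $T_1 \sim T_2$, fact \textbf{S3} gives $\source(T_1) = \source(T_2)$. On the other hand, because each $T_i$ is itself a source tableau, \cref{lem: tS(T)} applies and yields $\source(T_i) = T_i$ for $i = 1,2$. Chaining these three equalities forces $T_1 = T_2$, which is the desired uniqueness.

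The main obstacle of the whole argument has already been absorbed into \cref{lem: tS(T)}, whose proof required the careful combinatorial analysis of the lattice paths $\tGam^{(u)}$, the subdiagrams $\sfD^{(1)}_u(T)$ and $\sfD^{(2)}_u(T)$, and the labeling $\sfp_T$, along with the characterization in \cref{Lem: source and sink cond}. Once that rigidity statement is in hand, the present theorem requires no additional work beyond the three-line synthesis described above.
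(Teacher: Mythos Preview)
Your proposal is correct and essentially identical to the paper's own proof: existence via \textbf{S1} and \textbf{S2}, and uniqueness by combining \textbf{S3} with \cref{lem: tS(T)} to obtain $T_1 = \source(T_1) = \source(T_2) = T_2$. There is nothing to add.
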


\begin{proof}
Recall that the existence is already shown by using {\bf S1} and {\bf S2}. 
For the uniqueness, suppose that $T_1$ and $T_2$ are source tableaux contained in $E$.
Combining {\bf S3} with \cref{lem: tS(T)} yields that
\[
T_1 = \source(T_1) = \source(T_2) = T_2.
\]
Hence, the source tableau in $E$ is unique.
\end{proof}

For each $E \in \calE_{\lambda;m}$, define
$$
T_E := \text{the unique source tableau contained in $E$.}
$$

\subsection{Existence and uniqueness of sink tableaux in $E$}
\label{subsec: sink tableau}

Similar to the previous subsection, we construct a tableau $\sink(T)$ and show that it is the unique sink tableau in $E$.
To do this, we need the following preparation.

Given two lattice points $P$ and $P'$ in the same column, we denote the vertical line from $P$ to $P'$ by $\mathtt{VL}(P,P')$.
For each $i \in \calI(T)$, we define a new lattice path $\hGam_i$ by extending $\Gamma_i$ with the following algorithm.

\begin{algorithm}\label{alg: hat Gamma}
Fix $i \in \calI(T)$.
\begin{enumerate}[label = {\it Step \arabic*.}]
\item For each $j \in \calI(T)$, set $\Gamma'_j$ to be the lattice path obtained by connecting the following three lattice paths:  
\[
\mathtt{VL}(\grco{r_\sft^{(i)} -1}{c_\sft^{(i)}}, \grco{0}{c_\sft^{(i)}}),
\ \ 
\Gamma_j(T),
\ \  \text{and} \ \ 
\mathtt{VL}(\grco{r_\sfb^{(i)}}{c_\sfb^{(i)} - 1}, \grco{R_\sfb^{(i)}}{c_\sfb^{(i)} - 1}),
\]
where $R_\sfb^{(i)}:=\max \{ r \mid (r,c_\sfb^{(i)} - 1) \in \tyd(\lambda) \}$.
\item Set $\grco{r_\sfb}{c_{\sfb}}$ to be the lattice point in $V(\Gamma'_i)$ satisfying that
\[
c_{\sfb} = \min\{ c \mid \grco{r}{c} \in V(\Gamma'_i) \}
\quad \text{and} \quad
r_{\sfb} = \min\{ r \mid \grco{r}{c_{\sfb}} \in V(\Gamma'_i) \}
.
\]
\item If there exists $j \in \calI(T)$ such that
\begin{align}\label{Eq: top cross condition2}
r', r'' > r_\sfb \quad \text{and} \quad c'< c_\sfb < c''
\quad \text{for some $\grco{r'}{c'}, \grco{r''}{c''} \in V(\Gamma'_j)$,}
\end{align}
then go to {\it Step 4}. Otherwise, go to {\it Step 5}.
\item Let $j_0 = \min\{j \mid \text{$\Gamma'_j$ satisfies \cref{Eq: top cross condition2}}\}$ and $r_{0} = \min \left\{ 
r \mid  \grco{r}{c_\sfb-1} \in V(\Gamma'_{j_0})
\right\}$.
Then, let $\Gamma$ be the lattice path satisfying that
\[
V(\Gamma) = V(\Gamma'_i) \setminus \{\grco{r}{c_\sfb-1} \mid r \ge r_0\} \cup \{ \grco{r}{c} \in V(\Gamma'_{j_{0}}) \mid r \ge r_{0} \ \text{and} \  c \le c_{\sfb} \}.
\]
Set $\Gamma'_i := \Gamma$. 
Go to {\it Step 2}.
\item Return $\hGam_i(T)=\Gamma'_i$ and terminate the algorithm.
\end{enumerate}
\end{algorithm}

If $T$ is clear in the context, we simply write the lattice path $\hGam_i(T)$ by $\hGam_i$ for $i \in \calI(T)$. 

\begin{example}\label{Eg: modified lattice path2}
Let us consider 
\[
T = \begin{array}{l}\scalebox{0.75}{$
\begin{ytableau}
1  & 2  & 3 & 4 & 5 & 6 & 7 & \color{blue} 23 \\
8  & 9  & 10 & 11 & 12 & 13 & \color{brown} 25    \\
14  & 15  & 16 & 17 & 24 & \color{brown} 25         \\
18  & 19  & 20 & \color{red} 22                      \\
21  &\color{red} 22 & \color{blue} 23              \\
26 & 27                                \\
28 & \color{violet} 29                             \\
\color{violet} 29                                     
\end{ytableau}$}
\end{array}.
\]
(In fact, $T$ is the source tableau in the equivalence class of the tableau given in \cref{Eg: lattice path}.)
By applying \cref{alg: hat Gamma} to each $i \in \calI(T)= \{ 22,23,25,29 \}$, one can see all $\hGam_i$'s as below.
\begin{displaymath}
\def\pp {0.512}
\def\hp {5.5}
\scalebox{0.75}{$
\begin{tikzpicture}
\node at (0 - \hp, 0) {$
\begin{ytableau}
1  & 2  & 3 & 4 & 5 & 6 & 7 & \color{blue} 23 \\
8  & 9  & 10 & 11 & 12 & 13 & \color{brown} 25    \\
14  & 15  & 16 & 17 & 24 & \color{brown} 25         \\
18  & 19  & 20 & \color{red} 22                      \\
21  &\color{red} 22 & \color{blue} 23              \\
26 & 27                                \\
28 & \color{violet} 29                             \\
\color{violet} 29                                   
\end{ytableau}
$};

\draw[line width = 0.5mm, color = red] 
(-4*\pp - \hp, -4*\pp) -- (-4*\pp - \hp, -3*\pp) -- (-3*\pp - \hp, -3*\pp) -- (-3*\pp - \hp, 0*\pp) -- (-1*\pp - \hp, 0*\pp) -- (-1*\pp - \hp, 1*\pp) -- (0*\pp - \hp, 1*\pp) -- (0*\pp - \hp, 4*\pp);

\node[] at (-\hp, -2.5) {$\hGam_{22}(T)$};

\node at (0,0) {$
\begin{ytableau}
1  & 2  & 3 & 4 & 5 & 6 & 7 & \color{blue} 23 \\
8  & 9  & 10 & 11 & 12 & 13 & \color{brown} 25    \\
14  & 15  & 16 & 17 & 24 & \color{brown} 25         \\
18  & 19  & 20 & \color{red} 22                      \\
21  &\color{red} 22 & \color{blue} 23              \\
26 & 27                                \\
28 & \color{violet} 29                             \\
\color{violet} 29                                     
\end{ytableau}
$};

\node at (0 + \hp,0) {$
\begin{ytableau}
1  & 2  & 3 & 4 & 5 & 6 & 7 & \color{blue} 23 \\
8  & 9  & 10 & 11 & 12 & 13 & \color{brown} 25    \\
14  & 15  & 16 & 17 & 24 & \color{brown} 25         \\
18  & 19  & 20 & \color{red} 22                      \\
21  &\color{red} 22 & \color{blue} 23              \\
26 & 27                                \\
28 & \color{violet} 29                             \\
\color{violet} 29                                   
\end{ytableau}
$};

\draw[line width = 0.5mm, color = red] 
(-2*\pp, -3*\pp) -- (-2*\pp, -1*\pp) -- (-2*\pp, 0*\pp) -- (0*\pp, 0*\pp) -- (0*\pp, 2*\pp) -- (2*\pp, 2*\pp) -- (2*\pp, 3*\pp) -- (3*\pp, 3*\pp) -- (3*\pp, 4*\pp) -- (4*\pp, 4*\pp);
\node[] at (0*\hp, -2.5) {$\hGam_{23}(T)$};

\node at (0 + 2*\hp, 0) {$
\begin{ytableau}
1  & 2  & 3 & 4 & 5 & 6 & 7 & \color{blue} 23 \\
8  & 9  & 10 & 11 & 12 & 13 & \color{brown} 25    \\
14  & 15  & 16 & 17 & 24 & \color{brown} 25         \\
18  & 19  & 20 & \color{red} 22                      \\
21  &\color{red} 22 & \color{blue} 23              \\
26 & 27                                \\
28 & \color{violet} 29                             \\
\color{violet} 29                                     
\end{ytableau}
$};

\draw[line width = 0.5mm, color = red] 
(1*\pp + \hp, 1*\pp) -- (1*\pp + \hp, 2*\pp) -- (2*\pp + \hp, 2*\pp) -- (2*\pp + \hp, 3*\pp) -- (3*\pp + \hp, 3*\pp) -- (3*\pp + \hp, 4*\pp);
\node[] at (\hp, -2.5) {$\hGam_{25}(T)$};

\draw[line width = 0.5mm, color = red] (-4*\pp + 2*\hp, -4*\pp) -- (-4*\pp + 2*\hp, -3*\pp) -- (-3*\pp + 2*\hp, -3*\pp) -- (-3*\pp + 2*\hp, -2*\pp) -- (-2*\pp + 2*\hp, -2*\pp) -- (-2*\pp + 2*\hp, 4*\pp);

\node[] at (2*\hp, -2.5) {$\hGam_{29}(T)$};

\end{tikzpicture}
$}
\end{displaymath}
\end{example}

For convenience, we introduce some terms related to $\hGam_i$'s.
For any $(r,c) \in \tyd(\lambda)$ and $i \in \calI(T)$, we say that \emph{$(r,c)$ is right of} $\hGam_i$ if there exists $0 \le c' < c$ such that $\grco{r-1}{c'}, \grco{r}{c'} \in V(\hGam_i)$.
Otherwise, we say that \emph{$(r,c)$ is left of} $\hGam_i$.
For each $i \in \calI(T)$, we call the path
$\mathtt{VL}(\grco{0}{c_\sft^{(i)}}, \grco{r_\sft^{(i)} -1}{c_\sft^{(i)}})$ the \emph{rightmost path of $\hGam_i$}.
Given $i,j \in \calI(T)$, if there exist $\grco{r'}{c'}, \grco{r''}{c''} \in V(\hGam_{j})$ such that $r',r'' < r^{(i)}_\sft$ and $c'< c_\sft^{(i)} < c''$, then we say that \emph{$\hGam_{j}$ crosses the rightmost path of $\hGam_{i}$}.

For each $i \in \calI(T)$, we will define a label $\sfq_i(T) \in \{1,2,\ldots, |\calI(T)| \}$ to enumerate the lattice paths $\{ \hGam_i \mid i \in \calI(T)\}$.
To do this, we first set $\sfq'_i \in \{1,2,\ldots, |\calI(T)|\}$ satisfying the following conditions:
Let $i,j \in \calI(T)$.
\begin{enumerate}[label = {\bf C\arabic*$\boldsymbol{'}$.}]
\item If $c_\sft^{(i)} < c_\sft^{(j)}$, then $\sfq'_i < \sfq'_j$.
\item If $c_\sft^{(i)} > c_\sft^{(j)}$, then $\sfq'_i > \sfq'_j$.
\item When $c_\sft^{(i)} = c_\sft^{(j)}$, consider the highest lattice point $q \in V(\hGam_i) \cap V(\hGam_j)$ such that neither $q + \grco{0}{-1}$ nor $q + \grco{1}{0}$ are contained in $V(\hGam_i) \cap V(\hGam_j)$.
If $q + \grco{0}{-1} \in V(\hGam_i)$, then $\sfq'_i < \sfq'_j$. Otherwise, $\sfq'_i > \sfq'_j$.
\end{enumerate}
We notice that $\{\sfq'_{i} \mid i\in \calI(T)\} = \{1,2,\ldots, |\calI(T)|\}$. By rearranging $\sfq_i'$'s with the following algorithm, we define a function $\sfq_{T} : \calI(T) \ra \{1,2,\ldots, |\calI(T)|\}$.

\begin{algorithm}\label{alg: tsfp2} 
For $i \in \calI(T)$, let $q_{i}:=\sfq'_i$, where $\sfq'_i$ is the index defined above.
\begin{enumerate}[label = {\it Step \arabic*.}]
\item Let $k = 1$.
\item Take $i_k$ and $i_{k+1}$ in $\calI(T)$ such that $q_{i_k} = k$ and $q_{i_{k+1}} = k+1$.
\item If $\hGam_{i_{k+1}}$ crosses the rightmost path of $\hGam_{i_{k}}$, then set $q_{i_k} := k + 1$ and $q_{i_{k+1}} := k$ and go to {\it Step 1}. Otherwise, go to {\it Step 4}.
\item If $k < |\calI(T)| - 1$, then set $k = k+1$ and go to {\it Step 2}. 
Otherwise, set $\sfq_T(i) := q_i$ for each $i\in \calI(T)$ and go to {\it Step 5}. 
\item Return $(\sfq_T(i))_{i \in \calI(T)}$ and terminate the algorithm.
\end{enumerate}
\end{algorithm}
By the construction of $\sfq_T$, it is clear that $\sfq_T$ is a bijection. 

\begin{lemma}\label{lem: sink order}
Given a sink tableau $T$, let us enumerate the elements of $\calI(T)$ in increasing order $j_1 < j_2 < \cdots < j_{|\calI(T)|}$.
Then, we have $\sfq_T(j_u) = u$ for all $1 \le u \le |\calI(T)|$.
\end{lemma}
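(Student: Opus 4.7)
The plan is to show $\sfq_T(j_u) < \sfq_T(j_{u+1})$ for each $1 \le u < |\calI(T)|$; since $\sfq_T$ is a bijection from $\calI(T)$ to $\{1,2,\ldots,|\calI(T)|\}$, these $|\calI(T)|-1$ strict inequalities force the unique possibility $\sfq_T(j_u)=u$. The strategy mirrors the source case (\cref{lem: sfpT for source}), but everything is rotated from rows/bottoms to columns/tops: instead of \cref{Lem: source and sink cond}(1) I will repeatedly invoke the sink characterization \cref{Lem: source and sink cond}(2) that every descent of $T$ is attacking.

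Fix $u$ and split into the two cases $c_\sft^{(j_u)} \ge c_\sft^{(j_{u+1})}$ and $c_\sft^{(j_u)} < c_\sft^{(j_{u+1})}$. In the first case I expect to show that $\Top_{j_{u+1}}(T)$ lies to the left of $\hGam_{j_u}$, so by the construction of $\hGam_{j_u}$ and $\hGam_{j_{u+1}}$, the path $\hGam_{j_u}$ does not cross the rightmost path of $\hGam_{j_{u+1}}$; combined with C1$'$, C2$'$, or C3$'$, this shows $\sfq'_{j_u} > \sfq'_{j_{u+1}}$ and that the pair $(j_u,j_{u+1})$ will be swapped in \cref{alg: tsfp2} exactly once, yielding $\sfq_T(j_u) < \sfq_T(j_{u+1})$. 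In the second case, C1$'$ directly gives $\sfq'_{j_u} < \sfq'_{j_{u+1}}$; here I will use the sink condition to argue that for every $i \in [j_u, j_{u+1}-1]$ the configuration of $i+1$'s relative to Top$_i(T)$ (coming from attacking descents of type (a) or (b)) forces every box lying strictly right of and weakly above $\Top_{j_{u+1}}(T)$ in the region above $\hGam_{j_{u+1}}$ to be filled with an integer at most $j_u$, so that $\hGam_{j_{u+1}}$ cannot cross the rightmost path of $\hGam_{j_u}$ and the initial order is preserved by \cref{alg: tsfp2}.

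To glue the two cases I will follow the same swap-tracking recipe as in the source-case proof: define $\calI_1,\calI_2,\calI_3$ (now with respect to columns and rightmost paths), count that at the step $k=\sfq'_{j_{u+1}}+|\calI_1|+|\calI_3|$ we meet $i_k=j_u$ and $i_{k+1}=j_{u+1}$ (in the first case) where the swap occurs, and then observe the relative order is monotone thereafter since the opposite crossing is forbidden. The main obstacle I anticipate is the case $c_\sft^{(j_u)}=c_\sft^{(j_{u+1})}$ where C3$'$ applies: I must carefully analyze the highest common lattice point of $\hGam_{j_u}$ and $\hGam_{j_{u+1}}$ and reconcile it with the sink condition to determine which path branches leftward. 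A secondary delicate point is the geometric claim — in the first case — that no box of entry $> j_u$ sits above $\hGam_{j_{u+1}}$ in the column range where $\hGam_{j_u}$ could otherwise cross back; this requires a chain argument along all descents $i \in [j_u,j_{u+1}-1]$ using \cref{Lem: source and sink cond}(2).
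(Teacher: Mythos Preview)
Your overall plan—mirror the proof of \cref{lem: sfpT for source}, replace rows/bottoms by columns/tops, invoke \cref{Lem: source and sink cond}(2), and track swaps via sets $\calI_1,\calI_2,\calI_3$—is exactly the paper's approach. However, the content of your two cases is reversed, and this creates a genuine gap.

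In your Case~1 ($c_\sft^{(j_u)} \ge c_\sft^{(j_{u+1})}$, so $\sfq'_{j_u} > \sfq'_{j_{u+1}}$), a swap in \cref{alg: tsfp2} is required to correct the order. By Step~3 of that algorithm, the swap at adjacent positions $k$ (occupied by $j_{u+1}$) and $k+1$ (occupied by $j_u$) occurs precisely when $\hGam_{j_u}$ \emph{does} cross the rightmost path of $\hGam_{j_{u+1}}$—the opposite of what you wrote. Your stated conclusion (``does not cross'' yet ``the pair will be swapped exactly once'') is internally inconsistent. The correct geometric claim is that $\Bot_{j_u}(T)$ lies left of $\hGam_{j_{u+1}}$; your claim that $\Top_{j_{u+1}}(T)$ lies left of $\hGam_{j_u}$ is generally false, since $j_{u+1}>j_u$ forces $\Top_{j_{u+1}}(T)$ to sit right of $\Gamma_{j_u}$ in the relevant row range. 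The chain argument via \cref{Lem: source and sink cond}(2) belongs here: tracing attacking descents $i\in[j_u,j_{u+1}-1]$ yields $c_\sfb^{(j_u)} \le c_\sft^{(j_{u+1})}$, which together with $c_\sft^{(j_u)} > c_\sft^{(j_{u+1})}$ gives the crossing. Correspondingly, at the swap step you meet $i_k = j_{u+1}$ and $i_{k+1} = j_u$, not the reverse.

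Conversely, your Case~2 ($c_\sft^{(j_u)} < c_\sft^{(j_{u+1})}$) needs no chain argument: the column inequality together with $j_u<j_{u+1}$ immediately gives that $\Top_{j_u}(T)$ is left of $\hGam_{j_{u+1}}$, whence $\sfq'_{j_u}<\sfq'_{j_{u+1}}$ and $\hGam_{j_{u+1}}$ cannot cross the rightmost path of $\hGam_{j_u}$. The elaborate sink-condition argument you place here is unnecessary and is in fact the argument your Case~1 requires.
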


\begin{proof}
We claim that $\sfq_T(j_u) < \sfq_T(j_{u+1})$ for any $1 \le u < |\calI(T)|$. Given $1 \le u < |\calI(T)|$, we have two cases
\[
c_\sft^{(j_u)} \le c_\sft^{(j_{u+1})} 
\quad \text{and} \quad
c_\sft^{(j_u)} > c_\sft^{(j_{u+1})}.
\]

In case where $c_\sft^{(j_u)} \le c_\sft^{(j_{u+1})}$, $\Top_{j_{u}}(T)$ is left of $\hGam_{j_{u+1}}$. 
This implies that $\sfq_{j_{u}}'$ and $\sfq_{j_{u+1}}'$, defined in {\bf C1$\boldsymbol{'}$}-{\bf C3$\boldsymbol{'}$}, satisfy the inequality $\sfq_{j_{u}}' < \sfq_{j_{u+1}}'$. By the construction of $\hGam_{j_{u}}$ and $\hGam_{j_{u+1}}$, $\hGam_{j_{u+1}}$ does not cross the rightmost path of $\hGam_{j_{u}}$. Thus, \cref{alg: tsfp2} does not allow that $\sfq_{j_{u}}'$ and $\sfq_{j_{u+1}}'$ are swapped in {\it Step~3}. This implies that $\sfq_T(j_u) < \sfq_T(j_{u+1})$.
 
In case where $c_\sft^{(j_u)} > c_\sft^{(j_{u+1})}$, we have that $\sfq_{j_{u}}' > \sfq_{j_{u+1}}'$. 
It follows from~\cref{Lem: source and sink cond}(2) that for each $j_u < i \le j_{u+1}$, there exists a box in $T^{-1}(i)$ which appears weakly right of $\Top_{j_u}(T)$, therefore $c_{\sfb}^{(j_{u})} \le c_{\sft}^{(j_{u+1})}$.
In addition, by the construction of $\hGam_{j_{u+1}}$, each box $(r,c)$ which appears right of $\hGam_{j_{u+1}}$ and satisfies $c \le c_{\sft}^{(j_{u+1})}$ is filled with an integer greater than $j_{u+1}$. 
This implies that $\Bot_{j_{u}}(T)$ is left of $\hGam_{j_{u+1}}$. 
Combining this with the assumption $c_\sft^{(j_u)} > c_\sft^{(j_{u+1})}$, we have that $\hGam_{j_{u}}$ crosses the rightmost path of $\hGam_{j_{u+1}}$.
Let
\begin{align*}
\calI_1 &:= \{i \in \calI(T) \mid i < j_u \text{ and } \sfq'_{j_{u+1}} < \sfq'_i < \sfq'_{j_u} \}, \\
\calI_2 &:= \{i \in \calI(T) \mid i > j_{u+1} \text{ and } \sfq'_{j_{u+1}} < \sfq'_i < \sfq'_{j_u} \},
\quad \text{and}\\
\calI_3 &:= \{i \in \calI(T) \mid \text{$\hGam_{j_{u+1}}$ crosses the rightmost path of $\hGam_{i}$} \}.
\end{align*}
Note that $\sfq'_{j_{u}} = \sfq'_{j_{u+1}} + |\calI_1| + |\calI_2| + 1$.
One can see that, when applying~\cref{alg: tsfp2}, we encounter the situation that
\[
k = \sfq'_{j_{u+1}} + |\calI_1| + |\calI_3|, 
\quad
i_k = {j_{u+1}},
\quad \text{and}\quad
i_{k+1} = {j_{u}}
\]
in {\it Step 2}.
In this situation, after applying {\it Step 3}, we have $q_{j_u} = k < k+1 = q_{j_{u+1}}$.
Since $\hGam_{j_{u+1}}$ cannot cross the rightmost path of $\hGam_{j_{u}}$, the relative order $q_{j_u} < q_{j_{u+1}}$ does not change until the algorithm terminates.
Thus, we have that $\sfq_T(j_u) < \sfq_T(j_{u+1})$.

Since we have shown that $\sfq_T(j_u) < \sfq_T(j_{u+1})$ for all $1 \le u < |\calI(T)|$, we immediately have that
$\sfq_T(j_u) = u$ for all $1 \le u \le |\calI(T)|$. 
\end{proof}

For convenience, we simply write the lattice path $\hGam_{\sfq^{-1}_{T}(u)}$ by $\hGam^{(u)}$ for $u \in [1,|\calI(T)|\,]$.
Given $u \in [1,|\calI(T)|\,]$, let $\sfhA_{u}$ be the subdiagram of $\tyd(\lambda)$ consisting of the boxes located left of $\hGam^{(u)}$. Then, we define
\begin{align}\label{Eq: sfhD(i)}
\sfhD_{u}^{(1)}(T) :=
\sfhA_{u} 
\setminus \Big(\bigcup_{1 \le v < u} (\sfhA_{v} \cup T^{-1}(\sfq_{T}^{-1}(v)))\Big) 
\quad \text{and} \quad 
\sfhD_{u}^{(2)}(T):= T^{-1}(\sfq_{T}^{-1}(u)).
\end{align}

\begin{example}\label{Eg: sfp and sfD compute2}
Let us revisit \cref{Eg: modified lattice path2}.
One can easily see that
\[
\sfq'_{22} = 2, \quad 
\sfq'_{23} = 4, \quad 
\sfq'_{25} = 3, \quad \text{and} \quad
\sfq'_{29} = 1.
\]
By applying~\cref{alg: tsfp2}, one can compute $\sfq_{T}(i)$'s as \cref{Table: sfq compute}, where $\cdot$'s in the third and fourth columns are used to omit unnecessary information.
Consequently, we have
\[
\sfq_{T}(22) = 1, \quad 
\sfq_{T}(23) = 3, \quad 
\sfq_{T}(25) = 4, \quad \text{and} \quad
\sfq_{T}(29) = 2.
\]
We draw $\sfhD_u^{(1)}(T)$ and $\sfhD_u^{(2)}(T)$ for $u = 1,2,3,4$ in~\cref{Fig: sfD compute2}. 
Here, asterisks and colored bullets are used to indicate the boxes in $\sfhD_u^{(1)}(T)$ and $\sfhD_u^{(2)}(T)$, respectively.
\end{example}

\begin{table}[t]
\renewcommand*\arraystretch{1.1}
\setlength{\tabcolsep}{9pt}
\centering
\begin{tabular}{c||c|c|c|c}
{\it Steps} & $k$ & $i_k$ & $i_{k+1}$ & $(q_{22}, q_{23}, q_{25}, q_{29})$  \\ \hline \hline
{\it Step 1} & 1 & $\cdot$ & $\cdot$ & $(2,4,3,1)$ \\ \hline
{\it Steps 2, 3} & 1 & 29 & 22 & $(1,4,3,2)$ \\ \hline
{\it Step 1} & 1 & $\cdot$ & $\cdot$ & $(1,4,3,2)$ \\ \hline
{\it Steps 2, 3} & 1 & 22 & 29 & $(1,4,3,2)$ \\ \hline
{\it Step 4} & 2 & $\cdot$ & $\cdot$ & $(1,4,3,2)$ \\ \hline
{\it Steps 2, 3} & 2 & 29 & 25 & $(1,4,3,2)$ \\ \hline
{\it Step 4} & 3 & $\cdot$ & $\cdot$ & $(1,4,3,2)$ \\ \hline
{\it Steps 2, 3} & 3 & 25 & 23 & $(1,3,4,2)$ \\ \hline
{\it Step 1} & 1 & $\cdot$ & $\cdot$ & $(1,3,4,2)$ \\ \hline
{\it Steps 2, 3} & 1 & 22 & 29 & $(1,3,4,2)$ \\ \hline
{\it Step 4} & 2 & $\cdot$ & $\cdot$ & $(1,3,4,2)$ \\ \hline
{\it Steps 2, 3} & 2 & 29 & 23 & $(1,3,4,2)$ \\ \hline
{\it Steps 4} & 3 & $\cdot$ & $\cdot$ & $(1,3,4,2)$ \\
\hline
{\it Steps 2, 3} & 3 & 23 & 25 & $(1,3,4,2)$ \\
\hline
{\it Steps 4, 5} & 3 & $\cdot$ & $\cdot$ & $(1,3,4,2)$
\end{tabular}
\medskip
\caption{The process of obtaining $\sfq_T(i)$'s in~\cref{Eg: sfp and sfD compute2}}
\label{Table: sfq compute}
\end{table}

\begin{figure}[t]
\begin{displaymath}
\def\pp {0.512}
\def\hp {5}
\def\sp {0.035}
\scalebox{0.75}{$
\begin{tikzpicture}
\node at (0 - \hp, 0) {$
\begin{ytableau}
\ast   & \ast  & \ast  & \ast  & ~  & ~  & ~  & ~ \\
\ast   & \ast  & \ast  & \ast  & ~  & ~  & ~      \\
\ast   & \ast  & \ast  & \ast  & ~  & ~           \\
\ast   & \ast  & \ast  & \color{red} \bullet   \\
\ast   & \color{red} \bullet  &  ~                         \\
\ast   & ~                               \\
\ast   & ~                               \\
~
\end{ytableau}
$};

\node at (0*\pp - \hp, 4*\pp + 0.5) {\color{red} \tiny $\hGam^{(22)}$};

\draw[line width = 0.5mm, color = red] 
(-4*\pp - \hp - \sp, -4*\pp) -- (-4*\pp - \hp - \sp, -3*\pp + \sp) -- (-3*\pp - \hp - \sp, -3*\pp + \sp) -- (-3*\pp - \hp - \sp, 0*\pp + \sp) -- (-1*\pp - \hp, 0*\pp + \sp) -- (-1*\pp - \hp, 1*\pp) -- (0*\pp - \hp - \sp, 1*\pp) -- (0*\pp - \hp - \sp, 4*\pp);

\node at (0*\pp - \hp - 1, 4*\pp + 0.5) {\color{violet} \tiny $\hGam^{(29)}$};

\draw[line width = 0.5mm, color = violet] (-4*\pp - \hp + \sp, -4*\pp - \sp) -- (-4*\pp - \hp + \sp, -3*\pp - \sp) -- (-3*\pp - \hp + \sp, -3*\pp - \sp) -- (-3*\pp - \hp + \sp, -2*\pp) -- (-2*\pp - \hp - \sp, -2*\pp) -- (-2*\pp - \hp - \sp, 4*\pp);

\node at (0*\pp - \hp + 2.3, 4*\pp + 0.5) {\color{blue} \tiny $\hGam^{(23)}$};

\draw[line width = 0.5mm, color = blue] 
(-2*\pp - \hp + \sp, -3*\pp) -- (-2*\pp - \hp + \sp, -1*\pp) -- (-2*\pp - \hp + \sp, 0*\pp - \sp) -- (0*\pp - \hp + \sp, 0*\pp - \sp) -- (0*\pp - \hp + \sp, 2*\pp - \sp) -- (2*\pp - \hp + \sp, 2*\pp - \sp) -- (2*\pp - \hp + \sp , 3*\pp - \sp) -- (3*\pp - \hp + \sp, 3*\pp - \sp) -- (3*\pp - \hp + \sp, 4*\pp - \sp) -- (4*\pp - \hp + \sp, 4*\pp - \sp);

\node at (0*\pp - \hp + 1.4, 4*\pp + 0.5) {\color{brown} \tiny $\hGam^{(25)}$};

\draw[line width = 0.5mm, color = brown] 
(1*\pp - \hp, 1*\pp) -- (1*\pp - \hp, 2*\pp + \sp) -- (2*\pp - \hp - \sp, 2*\pp + \sp) -- (2*\pp - \hp - \sp, 3*\pp + \sp) -- (3*\pp - \hp - \sp, 3*\pp + \sp) -- (3*\pp - \hp - \sp, 4*\pp);

\node[] at (-\hp, -3) {$\sfhD_{1}^{(1)}(T)$ and $\sfhD_{1}^{(2)}(T)$};

\node at (0, 0) {$
\begin{ytableau}
*(gray!70)   & *(gray!70)  & *(gray!70)  & *(gray!70)  & ~  & ~  & ~  & ~ \\
*(gray!70)   & *(gray!70)  & *(gray!70)  & *(gray!70)  & ~  & ~  & ~      \\
*(gray!70)   & *(gray!70)  & *(gray!70)  & *(gray!70)  & ~  & ~           \\
*(gray!70)   & *(gray!70)  & *(gray!70)  & *(gray!70)                     \\
*(gray!70)   & *(gray!70)  & ~                       \\
*(gray!70)   & \ast                               \\
*(gray!70)  & \color{violet} \bullet                               \\
\color{violet} \bullet
\end{ytableau}
$};


\draw[line width = 0.5mm, color = red] 
(-4*\pp - \sp, -4*\pp) -- (-4*\pp - \sp, -3*\pp + \sp) -- (-3*\pp - \sp, -3*\pp + \sp) -- (-3*\pp - \sp, 0*\pp + \sp) -- (-1*\pp, 0*\pp + \sp) -- (-1*\pp, 1*\pp) -- (0*\pp - \sp, 1*\pp) -- (0*\pp - \sp, 4*\pp);


\draw[line width = 0.5mm, color = violet] (-4*\pp + \sp, -4*\pp - \sp) -- (-4*\pp + \sp, -3*\pp - \sp) -- (-3*\pp + \sp, -3*\pp - \sp) -- (-3*\pp + \sp, -2*\pp) -- (-2*\pp - \sp, -2*\pp) -- (-2*\pp - \sp, 4*\pp);


\draw[line width = 0.5mm, color = blue] 
(-2*\pp+ \sp, -3*\pp) -- (-2*\pp + \sp, -1*\pp) -- (-2*\pp + \sp, 0*\pp - \sp) -- (0*\pp + \sp, 0*\pp - \sp) -- (0*\pp + \sp, 2*\pp - \sp) -- (2*\pp + \sp, 2*\pp - \sp) -- (2*\pp + \sp , 3*\pp - \sp) -- (3*\pp + \sp, 3*\pp - \sp) -- (3*\pp + \sp, 4*\pp - \sp) -- (4*\pp + \sp, 4*\pp - \sp);


\draw[line width = 0.5mm, color = brown] 
(1*\pp, 1*\pp) -- (1*\pp, 2*\pp + \sp) -- (2*\pp- \sp, 2*\pp + \sp) -- (2*\pp - \sp, 3*\pp + \sp) -- (3*\pp - \sp, 3*\pp + \sp) -- (3*\pp - \sp, 4*\pp);

\node[] at (0, -3) {$\sfhD_{2}^{(1)}(T)$ and $\sfhD_{2}^{(2)}(T)$};

\node at (0 + \hp, 0) {$
\begin{ytableau}
*(gray!70)   & *(gray!70)  & *(gray!70)  & *(gray!70)  & \ast  & \ast  & \ast  & \color{blue} \bullet \\
*(gray!70)   & *(gray!70)  & *(gray!70)  & *(gray!70)  & \ast  & \ast  & ~      \\
*(gray!70)   & *(gray!70)  & *(gray!70)  & *(gray!70)  & ~  & ~           \\
*(gray!70)   & *(gray!70)  & *(gray!70)  & *(gray!70)                     \\
*(gray!70)   & *(gray!70)  & \color{blue} \bullet                       \\
*(gray!70)   & *(gray!70)                               \\
*(gray!70)   & *(gray!70)                               \\
*(gray!70)
\end{ytableau}
$};


\draw[line width = 0.5mm, color = red] 
(-4*\pp + \hp - \sp, -4*\pp) -- (-4*\pp + \hp - \sp, -3*\pp + \sp) -- (-3*\pp + \hp - \sp, -3*\pp + \sp) -- (-3*\pp + \hp - \sp, 0*\pp + \sp) -- (-1*\pp + \hp, 0*\pp + \sp) -- (-1*\pp + \hp, 1*\pp) -- (0*\pp + \hp - \sp, 1*\pp) -- (0*\pp + \hp - \sp, 4*\pp);


\draw[line width = 0.5mm, color = violet] (-4*\pp + \hp + \sp, -4*\pp - \sp) -- (-4*\pp + \hp + \sp, -3*\pp - \sp) -- (-3*\pp + \hp + \sp, -3*\pp - \sp) -- (-3*\pp + \hp  + \sp, -2*\pp) -- (-2*\pp + \hp - \sp, -2*\pp) -- (-2*\pp + \hp - \sp, 4*\pp);


\draw[line width = 0.5mm, color = blue] 
(-2*\pp + \hp + \sp, -3*\pp) -- (-2*\pp + \hp + \sp, -1*\pp) -- (-2*\pp + \hp + \sp, 0*\pp - \sp) -- (0*\pp + \hp + \sp, 0*\pp - \sp) -- (0*\pp + \hp + \sp, 2*\pp - \sp) -- (2*\pp + \hp + \sp, 2*\pp - \sp) -- (2*\pp + \hp + \sp , 3*\pp - \sp) -- (3*\pp + \hp + \sp, 3*\pp - \sp) -- (3*\pp + \hp + \sp, 4*\pp - \sp) -- (4*\pp + \hp + \sp, 4*\pp - \sp);


\draw[line width = 0.5mm, color = brown] 
(1*\pp + \hp, 1*\pp) -- (1*\pp + \hp, 2*\pp + \sp) -- (2*\pp + \hp - \sp, 2*\pp + \sp) -- (2*\pp + \hp - \sp, 3*\pp + \sp) -- (3*\pp + \hp - \sp, 3*\pp + \sp) -- (3*\pp + \hp - \sp, 4*\pp);

\node[] at (\hp, -3) {$\sfhD_{3}^{(1)}(T)$ and $\sfhD_{3}^{(2)}(T)$};

\node at (0 + 2*\hp, 0) {$
\begin{ytableau}
*(gray!70)   & *(gray!70)  & *(gray!70)  & *(gray!70)  & *(gray!70)  & *(gray!70)  & *(gray!70)  & *(gray!70) \\
*(gray!70)   & *(gray!70)  & *(gray!70)  & *(gray!70)  & *(gray!70)  & *(gray!70)  & \color{brown} \bullet     \\
*(gray!70)   & *(gray!70)  & *(gray!70)  & *(gray!70)  & \ast  & \color{brown} \bullet           \\
*(gray!70)   & *(gray!70)  & *(gray!70)  & *(gray!70)                     \\
*(gray!70)   & *(gray!70)  & *(gray!70)                        \\
*(gray!70)   & *(gray!70)                               \\
*(gray!70)   & *(gray!70)                       \\
*(gray!70)
\end{ytableau}
$};


\draw[line width = 0.5mm, color = red] 
(-4*\pp + 2*\hp - \sp, -4*\pp) -- (-4*\pp + 2*\hp - \sp, -3*\pp + \sp) -- (-3*\pp + 2*\hp - \sp, -3*\pp + \sp) -- (-3*\pp + 2*\hp - \sp, 0*\pp + \sp) -- (-1*\pp + 2*\hp, 0*\pp + \sp) -- (-1*\pp + 2*\hp, 1*\pp) -- (0*\pp + 2*\hp - \sp, 1*\pp) -- (0*\pp + 2*\hp - \sp, 4*\pp);


\draw[line width = 0.5mm, color = violet] (-4*\pp + 2*\hp + \sp, -4*\pp - \sp) -- (-4*\pp + 2*\hp + \sp, -3*\pp - \sp) -- (-3*\pp + 2*\hp + \sp, -3*\pp - \sp) -- (-3*\pp + 2*\hp + \sp, -2*\pp) -- (-2*\pp + 2*\hp - \sp, -2*\pp) -- (-2*\pp + 2*\hp - \sp, 4*\pp);


\draw[line width = 0.5mm, color = blue] 
(-2*\pp + 2*\hp + \sp, -3*\pp) -- (-2*\pp + 2*\hp + \sp, -1*\pp) -- (-2*\pp + 2*\hp + \sp, 0*\pp - \sp) -- (0*\pp + 2*\hp + \sp, 0*\pp - \sp) -- (0*\pp + 2*\hp + \sp, 2*\pp - \sp) -- (2*\pp + 2*\hp + \sp, 2*\pp - \sp) -- (2*\pp + 2*\hp + \sp , 3*\pp - \sp) -- (3*\pp + 2*\hp + \sp, 3*\pp - \sp) -- (3*\pp + 2*\hp + \sp, 4*\pp - \sp) -- (4*\pp + 2*\hp + \sp, 4*\pp - \sp);


\draw[line width = 0.5mm, color = brown] 
(1*\pp + 2*\hp, 1*\pp) -- (1*\pp + 2*\hp, 2*\pp + \sp) -- (2*\pp + 2*\hp - \sp, 2*\pp + \sp) -- (2*\pp + 2*\hp - \sp, 3*\pp + \sp) -- (3*\pp + 2*\hp - \sp, 3*\pp + \sp) -- (3*\pp + 2*\hp - \sp, 4*\pp);

\node[] at (2*\hp, -3) {$\sfhD_{4}^{(1)}(T)$ and $\sfhD_{4}^{(2)}(T)$};
\end{tikzpicture}
$}
\end{displaymath}
\caption{$\sfhD_{u}^{(1)}(T)$ and $\sfhD_{u}^{(2)}(T)$ for $u = 1,2,3,4$ in~\cref{Eg: sfp and sfD compute2}}
\label{Fig: sfD compute2}
\end{figure}

Now, we construct the desired tableau $\sink(T)$ with the following algorithm.

\begin{algorithm}\label{Alg: construct sink of T}
Let $T \in \IGLTm{\lambda}{m}$.
Set $f_0 = 0$ and $N_0 = 0$.
Then, for $1 \le u \le |\calI(T)|$, let $f_u := |\sfhD_u^{(1)}(T)| + 1$ and $N_u = \sum_{v = 0}^{u} f_v$.
\begin{enumerate}[label = {\it Step \arabic*.}]
\item Set $v := 1$.
\item Fill the boxes in $\sfhD_v^{(1)}(T)$ by $N_{v-1} + 1, N_{v-1} + 2, \ldots, N_{v-1} + f_v - 1$ from top to bottom starting from the left.
\item Fill the boxes in $\sfhD_v^{(2)}(T)$ by $N_v$.
\item If $v < |\calI(T)|$, then set $v := v + 1$ and go to {\it Step 2}. 
Otherwise, fill the remaining boxes by $N_{|\calI(T)|} + 1, N_{|\calI(T)|} + 2,\ldots, m$ from top to bottom starting from the left.
Then, define $\sink(T)$ to be the resulting filling and terminate the algorithm.
\end{enumerate}
\end{algorithm}

\begin{example}
Revisit \cref{Eg: sfp and sfD compute2}.
We see that
\[
\def\pp {0.512}
\def\hp {12}
\def\sp {0.035}
\scalebox{0.75}{$
\begin{tikzpicture}

\node at (-3 - \hp,0) {\fontsize{15.9}{15.9} $T \ \  = \quad $};
\node at (-\hp, 0) {$
\begin{ytableau}
1  & 2  & 3 & 4 & 5 & 6 & 7 & \color{blue} 23 \\
8  & 9  & 10 & 11 & 12 & 13 & \color{brown} 25    \\
14  & 15  & 16 & 17 & 24 & \color{brown} 25         \\
18  & 19  & 20 & \color{red} 22                      \\
21  &\color{red} 22 & \color{blue} 23              \\
26 & 27                                \\
28 & \color{violet} 29                             \\
\color{violet} 29                                     
\end{ytableau}
$};

\draw[line width = 0.5mm, color = red] 
(-4*\pp - \hp - \sp, -4*\pp) -- (-4*\pp - \hp - \sp, -3*\pp + \sp) -- (-3*\pp - \hp - \sp, -3*\pp + \sp) -- (-3*\pp - \hp - \sp, 0*\pp + \sp) -- (-1*\pp - \hp, 0*\pp + \sp) -- (-1*\pp - \hp, 1*\pp) -- (0*\pp - \hp - \sp, 1*\pp) -- (0*\pp - \hp - \sp, 4*\pp);

\draw[line width = 0.5mm, color = violet] (-4*\pp - \hp + \sp, -4*\pp - \sp) -- (-4*\pp - \hp + \sp, -3*\pp - \sp) -- (-3*\pp - \hp + \sp, -3*\pp - \sp) -- (-3*\pp - \hp + \sp, -2*\pp) -- (-2*\pp - \hp - \sp, -2*\pp) -- (-2*\pp - \hp - \sp, 4*\pp);

\draw[line width = 0.5mm, color = blue] 
(-2*\pp - \hp + \sp, -3*\pp) -- (-2*\pp - \hp + \sp, -1*\pp) -- (-2*\pp - \hp + \sp, 0*\pp - \sp) -- (0*\pp - \hp + \sp, 0*\pp - \sp) -- (0*\pp - \hp + \sp, 2*\pp - \sp) -- (2*\pp - \hp + \sp, 2*\pp - \sp) -- (2*\pp - \hp + \sp , 3*\pp - \sp) -- (3*\pp - \hp + \sp, 3*\pp - \sp) -- (3*\pp - \hp + \sp, 4*\pp - \sp) -- (4*\pp - \hp + \sp, 4*\pp - \sp);

\draw[line width = 0.5mm, color = brown] 
(1*\pp - \hp, 1*\pp) -- (1*\pp - \hp, 2*\pp + \sp) -- (2*\pp - \hp - \sp, 2*\pp + \sp) -- (2*\pp - \hp - \sp, 3*\pp + \sp) -- (3*\pp - \hp - \sp, 3*\pp + \sp) -- (3*\pp - \hp - \sp, 4*\pp);

\node at (-7.5,0.5) {\small \cref{Alg: construct sink of T}};
\draw[->] (-8.5,0) -- (-6.5,0);

\node at (-3.85,0) {\fontsize{15.9}{15.9} $\sink(T) \  = \quad $};
\node at (2.5,0) {.};
\node at (0, 0) {$
\begin{ytableau}
1  & 8  & 12 & 16 & 22 & 24 & 26 & \color{blue} 27 \\
2  & 9  & 13 & 17 & 23 & 25 & \color{brown} 29    \\
3  & 10  & 14 & 18 & 28 & \color{brown} 29         \\
4  & 11  & 15 & \color{red} 19                      \\
5  &\color{red} 19 & \color{blue} 27              \\
6 & 20                                \\
7 & \color{violet} 21                             \\
\color{violet} 21                                     
\end{ytableau}
$};

\draw[line width = 0.5mm, color = red] 
(-4*\pp - \sp, -4*\pp) -- (-4*\pp - \sp, -3*\pp + \sp) -- (-3*\pp - \sp, -3*\pp + \sp) -- (-3*\pp - \sp, 0*\pp + \sp) -- (-1*\pp, 0*\pp + \sp) -- (-1*\pp, 1*\pp) -- (0*\pp - \sp, 1*\pp) -- (0*\pp - \sp, 4*\pp);

\draw[line width = 0.5mm, color = violet] (-4*\pp + \sp, -4*\pp - \sp) -- (-4*\pp + \sp, -3*\pp - \sp) -- (-3*\pp + \sp, -3*\pp - \sp) -- (-3*\pp + \sp, -2*\pp) -- (-2*\pp - \sp, -2*\pp) -- (-2*\pp - \sp, 4*\pp);

\draw[line width = 0.5mm, color = blue] 
(-2*\pp + \sp, -3*\pp) -- (-2*\pp + \sp, -1*\pp) -- (-2*\pp + \sp, 0*\pp - \sp) -- (0*\pp + \sp, 0*\pp - \sp) -- (0*\pp + \sp, 2*\pp - \sp) -- (2*\pp + \sp, 2*\pp - \sp) -- (2*\pp + \sp , 3*\pp - \sp) -- (3*\pp + \sp, 3*\pp - \sp) -- (3*\pp + \sp, 4*\pp - \sp) -- (4*\pp + \sp, 4*\pp - \sp);

\draw[line width = 0.5mm, color = brown] 
(1*\pp, 1*\pp) -- (1*\pp, 2*\pp + \sp) -- (2*\pp - \sp, 2*\pp + \sp) -- (2*\pp - \sp, 3*\pp + \sp) -- (3*\pp - \sp, 3*\pp + \sp) -- (3*\pp - \sp, 4*\pp);
\end{tikzpicture}$}
\]
\end{example}

Let us collect some useful facts for $\sink(T)$ which can be easily seen.
\begin{enumerate}[label = {\bf S\arabic*$\boldsymbol{'}$.}]
\item By \cref{Lem: source and sink cond}(2), for any $T \in \IGLTm{\lambda}{m}$, $\sink(T)$ is a sink tableau.
\item For any $T \in \IGLTm{\lambda}{m}$, $T \sim \sink(T)$ by the construction of $\sink(T)$.
\item By the construction of $\sink(T)$, the set $\left\{ \left(\Gamma_i(T), T^{-1}(i) \right) \; \middle| \; i \in \calI(T) \right \}$ determines $\sink(T)$. 
In other words, if $T_1 \sim T_2$, then $\sink(T_1) = \sink(T_2)$.
\end{enumerate}

Combining the facts {\bf S1$\boldsymbol{'}$} and {\bf S2$\boldsymbol{'}$} shows the existence of sink tableaux in $E$.
However, the above facts {\bf S1$\boldsymbol{'}$}, {\bf S2$\boldsymbol{'}$}, and {\bf S3$\boldsymbol{'}$} do not guarantee the uniqueness of sink tableaux in $E$. 
To show the uniqueness, we need the lemma below.

\begin{lemma}\label{lem: tS(T)2}
For any sink tableau $T$, $\sink(T) = T$.
\end{lemma}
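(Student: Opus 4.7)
The plan is to mirror the proof of Lemma~\ref{lem: tS(T)}, exchanging the roles of rows and columns throughout, replacing the source characterization of Lemma~\ref{Lem: source and sink cond}(1) by the sink characterization of Lemma~\ref{Lem: source and sink cond}(2), and swapping the quadruple $(\sfp_T, \tGam^{(u)}, \sfA_u, \sfD^{(i)}_u(T))$ for $(\sfq_T, \hGam^{(u)}, \sfhA_u, \sfhD^{(i)}_u(T))$. I will set $j_0 := 0$ and write $\calI(T) = \{j_1 < j_2 < \cdots < j_{|\calI(T)|}\}$. By Lemma~\ref{lem: sink order} we have $\sfq_T(j_u) = u$ for all $u$, so immediately $\sfhD^{(2)}_u(T) = T^{-1}(j_u)$.

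The heart of the argument will be the claim
\[
\sfhD^{(1)}_u(T) = T^{-1}([j_{u-1}+1,\, j_u-1])
\quad \text{for all } 1 \le u \le |\calI(T)|.
\]
For the $\supseteq$ inclusion, I will fix $i \in [j_{u-1}+1, j_u-1]$ and let $B := T^{-1}(i)$ (a single box, since $i \notin \calI(T)$). To show $B \in \sfhA_u$, I will argue by contradiction: if $B$ lies right of $\hGam^{(u)}$, then $i < j_u$ together with the column-strict increase of $T$ forces $B$ strictly above and strictly right of $\Top_{j_u}(T)$; iterating the sink condition along $k = i, i+1, \ldots, j_u - 1$ (all of which lie outside $\calI(T)$) will yield the key inequality $c_\sft^{(k+1)} \ge c_\sft^{(k)}$ from a short case analysis on whether $k \in \Des(T)$, and hence produce a copy of $j_u$ weakly right of $B$, contradicting $B$ being strictly right of $\Top_{j_u}(T)$. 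The disjointness $B \notin \sfhA_v$ for $v < u$ will be shown analogously by a backward iteration from $j_{u-1}$ that places a copy of $j_{u-1}$ weakly left of $B$, contradicting $B$ being strictly left of $\Bot_{j_{u-1}}(T)$; the required ordering of the paths $\hGam^{(v)}$ with $v < u$ follows from Lemma~\ref{lem: sink order} exactly as in the source case. The $\subseteq$ inclusion reduces to verifying $\sfhD^{(1)}_u(T) \cap T^{-1}(i) = \emptyset$ for $i \in [1,m] \setminus ([j_{u-1}+1, j_u-1] \cup \calI(T))$, and this splits into the two dual subcases $u_0 < u$ and $u_0 > u$ (where $i \in [j_{u_0-1}+1, j_{u_0}-1]$), handled by the same type of iteration. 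Combining the claim with Lemma~\ref{Lem: source and sink cond}(2), which forces $B_{k+1}$ to sit in a column weakly right of $B_k$ (and directly below when the columns coincide), shows that $\sfhD^{(1)}_u(T)$ is filled with $j_{u-1}+1, \ldots, j_u - 1$ from top to bottom starting at the left, and similarly that the remaining boxes are filled with $j_{|\calI(T)|}+1, \ldots, m$ in the same order; matching this with Algorithm~\ref{Alg: construct sink of T} yields $T = \sink(T)$.

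The main obstacle will be this iterative sliding step. In the source proof, Lemma~\ref{Lem: source and sink cond}(1) places $k+1$ directly to the right of $\Top_k(T)$, so the analogous chain of row inequalities comes for free. In the sink case, Lemma~\ref{Lem: source and sink cond}(2) for descents branches into case (a) (a column adjacency $T(j,c) = k$, $T(j+1,c) = k+1$ potentially at a non-extreme copy of $k$) and case (b) (some copy of $k+1$ weakly above $\Bot_k(T)$); unifying these into a clean column inequality requires a short row-versus-column comparison in case (b), while the non-descent case uses $r_\sft^{(k)} \ge r_\sfb^{(k+1)}$ combined with the row and column strictness of $T$. The crucial structural fact is that every intermediate $k \in [i, j_u - 1]$ satisfies $k \notin \calI(T)$, so that $\Top_k(T) = \Bot_k(T) = B_k$; this collapses both descent subcases and the non-descent case into the single inequality $c_\sft^{(k+1)} \ge c_\sft^{(k)}$ needed to carry the iteration, and its symmetric counterpart is what drives the backward iteration used for the disjointness and $\subseteq$ parts.
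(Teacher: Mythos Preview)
Your proposal is correct and follows essentially the same route as the paper's own proof: the same reduction via Lemma~\ref{lem: sink order} to $\sfhD^{(2)}_u(T) = T^{-1}(j_u)$, the same central claim $\sfhD^{(1)}_u(T) = T^{-1}([j_{u-1}+1, j_u-1])$, the same contradiction arguments for $B \in \sfhA_u$ and $B \notin \sfhA_v$, and the same split into $u_0 < u$ and $u_0 > u$ for the reverse inclusion. Your explicit case analysis unifying branches (a), (b), and the non-descent case of Lemma~\ref{Lem: source and sink cond}(2) into the single column inequality $c_\sft^{(k+1)} \ge c_\sft^{(k)}$ is a welcome clarification of a step the paper compresses into the phrase ``$i+1$ is weakly right of $i$''.
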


\begin{proof}
Let $j_{0}=0$ and $
\calI(T) = \{j_1 < j_2 < \cdots < j_{|\calI(T)|}\}$. By \cref{lem: sink order}, $\sfq_T(j_u) = u$ for all $1 \le u \le |\calI(T)|$, thus from the definition of $\sfhD_u^{(2)}(T)$ we have that
\begin{align}\label{Eq: D(2)uT2}
\sfhD^{(2)}_u(T) = T^{-1}(j_u)  \quad \text{for all $1 \le u \le |\calI(T)|$.}
\end{align}

We claim that 
\begin{align}\label{eq: Du1 claim2}
\sfhD^{(1)}_{u}(T) = 
T^{-1}([j_{u-1} + 1, j_{u}-1]) \quad \text{for all $1 \le u \le |\calI(T)|$.}
\end{align}
First, let us prove the inclusion
$\sfhD^{(1)}_{u}(T) \supseteq
T^{-1}([j_{u-1} + 1, j_{u}-1])$ for all $1 \le u \le |\calI(T)|$.
Take any $1 \le u \le |\calI(T)|$ and $i \in [j_{u-1} + 1, j_{u}-1]$.
Let $B$ be the box filled with $i$ in $T$.
Recall that for any $1 \le v \le |\calI(T)|$, $\sfhA_{v}$ is defined to be the subdiagram of $\tyd(\lambda)$ consisting of the boxes located left of $\hGam^{(v)}$.
By the definition of $\sfhD^{(1)}_{u}(T)$, the desired inclusion is obtained by proving that 
\[
B \in \sfhA_u
\quad \text{and} \quad 
B \notin \sfhA_v \quad \text{for all $1 \le v < u$.}
\]

Suppose for the sake of contradiction that $B \notin \sfhA_u$.
Combining the fact that $T$ is an increasing tableau with the inequality $i < j_u$, we have that $B$ is strictly above $\Top_{j_{u}}(T)$.
This implies that $B$ is strictly right of $\Top_{j_{u}}(T)$.
By \cref{Lem: source and sink cond}(2), $i+1$ is weakly right of $i$.
Again, by \cref{Lem: source and sink cond}(2), $i+2$ appears weakly right of $i+1$, so $i+2$ is weakly right of $i$.
Continuing this process, we see that $j_{u}$ appears weakly right of $i$, that is, $\Top_{j_{u}}(T)$ is weakly right of $B$.
This contradicts the above observation that $B$ is strictly right of $\Top_{j_{u}}(T)$.
Thus, $B \in \sfhA_u$.

Suppose for the sake of contradiction that $B \in \sfhA_v$ for some $1 \le v < u$.
Since $i > j_v$, $B$ cannot be placed weakly above of $\Bot_{j_v}(T)$ while being left of $\hGam_v$.
Therefore, $B$ is strictly below of $\Bot_{j_v}(T)$.
In addition, by \cref{lem: sink order}, $\sfq_T(j_v) = v \le u-1 = \sfq_T(j_{u-1})$, so the boxes strictly below of $\Bot_{j_v}(T)$ while being left of $\hGam_v$ are placed left of $\hGam^{(u-1)}$.
Therefore, if we prove $B$ is right of $\hGam^{(u-1)}$, then we obtain a contradiction to the assumption that $B \in \sfhA_v$. 
Suppose that $B$ is left of $\hGam^{(u-1)}$.
Since $i>j_{u-1}$, $B$ is strictly left of $\Bot_{j_{u-1}}(T)$.
On the other hand, by using \cref{Lem: source and sink cond}(2) repeatedly, one can see that there exists at least one $j_{u-1}$ which appears weakly left of $i$.
It follows that $\Bot_{j_{u-1}}(T)$ is weakly left of $B$.
This gives a contradiction to the previous observation, thus $B \notin \sfhA_v$.

Next, let us prove the inclusion
$\sfhD^{(1)}_{u}(T) \subseteq
T^{-1}([j_{u-1} + 1, j_{u}-1])$ for all $1 \le u \le |\calI(T)|$.
Equivalently, we claim that
\[
\sfhD^{(1)}_{u}(T) \cap T^{-1}(i) = \emptyset \quad \text{for any $i \in [1,m] \setminus [j_{u-1} + 1, j_{u}-1]$.}
\]
To prove this, we let $1 \le u \le |\calI(T)|$ and $i \in [1,m] \setminus [j_{u-1} + 1, j_{u}-1]$.
Since $\bigcup_{1\le v \le |\calI(T)|} \sfhD^{(2)}_v(T) = T^{-1}(\calI(T))$ and $\sfhD^{(1)}_{u}(T) \cap \sfhD^{(2)}_{v}(T) = \emptyset$ for all $1 \le v \le |\calI(T)|$, $\sfhD^{(1)}_{u}(T) \cap T^{-1}(i) = \emptyset$ if $i \in \calI(T)$.
Therefore, we may assume that $i \notin \calI(T)$.
Take $u_0 \in \{1,2,\ldots, |\calI(T)|\} \setminus \{u\}$ such that $i \in [j_{u_0-1} +1, j_{u_0} - 1]$.
Since the method of proof for the case where $u_0 > u$ is similar with that for the case where $u_0 < u$, we only prove the latter case. 

Suppose that $u_0 < u$.
Let $B$ be the box filled with $i$.
If $i$ appears right of $\hGam_{j_{u_0}}$, then $B$ is strictly right of $\Top_{j_{u_0}}(T)$ since $i < j_{u_0}$.
On the other hand, for all $i \le j < j_{u_0}$, combining the fact $j \notin \calI(T)$ with \cref{Lem: source and sink cond}(2) yields that
$\Top_{j+1}(T)$ is placed weakly right of the unique box filled with $j$.
It follows that $\Top_{j_{u_0}}(T)$ is weakly right of $B$.
This contradicts the previous observation that $B$ is strictly right of $\Top_{j_{u_0}}(T)$.
Therefore, $i$ must appear left of $\hGam_{j_{u_0}}$.
It follows that $B \in \sfhA_{u_0}$.
Thus, $\sfhD^{(1)}_{u}(T) \cap T^{-1}(i) = \emptyset$ by \cref{Eq: sfhD(i)}.

Now, combining \cref{Lem: source and sink cond}(2) with the equations \cref{Eq: D(2)uT2} and \cref{eq: Du1 claim2}, we have that $\sfhD^{(1)}_{u}(T)$ is filled with $j_{u-1} + 1, j_{u-1} +2, \ldots, j_{u}-1$ from top to bottom starting from the left for each $1 \le u \le |\calI(T)|$.
Note that 
$$
T\Big(\tyd(\lambda) \setminus \bigcup_{1 \le u \le |\calI(T)|} \big( \sfhD^{(1)}_{u}(T) \cup \sfhD^{(2)}_{u}(T) \big) \Big) = \{j_{|\calI(T)|} + 1, j_{|\calI(T)|} + 2, \ldots, m\}
$$
and that $\tyd(\lambda) \setminus \bigcup_{1 \le u \le |\calI(T)|} (\sfhD^{(1)}_{u}(T) \cup \sfhD^{(2)}_{u}(T))$ is filled with $j_{|\calI(T)|} + 1, j_{|\calI(T)|} + 2, \ldots, m$ from top to bottom starting from the left.
Hence, by the construction of $\sink(T)$, we have that $T = \sink(T)$.
\end{proof}

Now, we prove the main theorem of this subsection.

\begin{theorem}\label{thm: uniqueness of sink}
For each $E \in \calE_{\lambda;m}$, there exists a unique sink tableau in $E$.
\end{theorem}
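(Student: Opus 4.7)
The plan is to mirror precisely the argument used for \cref{thm: uniqueness of source}, since all of the preparatory machinery for the sink side has already been established in \cref{subsec: sink tableau} in direct analogy with the source side. Concretely, the existence of a sink tableau in $E$ is essentially free: starting from any $T \in E$, property {\bf S1$\boldsymbol{'}$} gives that $\sink(T)$ is a sink tableau in $\IGLTm{\lambda}{m}$, while property {\bf S2$\boldsymbol{'}$} gives $T \sim_{\lambda;m} \sink(T)$, so $\sink(T)$ sits inside $E$.

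For uniqueness, I would take two sink tableaux $T_1, T_2 \in E$ and chain together {\bf S3$\boldsymbol{'}$} with \cref{lem: tS(T)2}: the equivalence $T_1 \sim_{\lambda;m} T_2$ forces $\sink(T_1) = \sink(T_2)$ by {\bf S3$\boldsymbol{'}$}, and since each $T_i$ is itself a sink tableau, \cref{lem: tS(T)2} yields $T_i = \sink(T_i)$. Combining these equalities gives $T_1 = T_2$.

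There is really no obstacle here beyond making sure the quoted facts are invoked in the right order; the hard work was carried out in the construction of $\sink(T)$ and in proving \cref{lem: tS(T)2}, where one had to check that $\sfhD^{(1)}_u(T) = T^{-1}([j_{u-1}+1, j_u - 1])$ and $\sfhD^{(2)}_u(T) = T^{-1}(j_u)$ for sink tableaux, using \cref{Lem: source and sink cond}(2) repeatedly. Once those are in hand, the proof of the current theorem is a two-line formal consequence, exactly parallel to the proof of \cref{thm: uniqueness of source}. After stating the theorem one can immediately define
\[
T'_E := \text{the unique sink tableau contained in $E$,}
\]
so that the pair $(T_E, T'_E)$ is well defined for every equivalence class $E \in \calE_{\lambda;m}$, ready to be used as the endpoints of the weak Bruhat interval in \cref{Sec: weak bruhat interval module}.
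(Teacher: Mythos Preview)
Your proof is correct and matches the paper's own argument essentially line for line: existence via {\bf S1$\boldsymbol{'}$} and {\bf S2$\boldsymbol{'}$}, and uniqueness by combining {\bf S3$\boldsymbol{'}$} with \cref{lem: tS(T)2} to get $T_1 = \sink(T_1) = \sink(T_2) = T_2$.
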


\begin{proof}
Recall that the existence is already shown by using {\bf S1$\boldsymbol{'}$} and {\bf S2$\boldsymbol{'}$}.
For the uniqueness, suppose that $T_1$ and $T_2$ are sink tableaux contained in $E$.
Combining {\bf S3$\boldsymbol{'}$} with \cref{lem: tS(T)2} yields that
\[
T_1 = \sink(T_1) = \sink(T_2) = T_2.
\]
Hence, the source tableau in $E$ is unique.
\end{proof}

For each $E \in \calE_{\lambda;m}$, define
$$
T'_E := \text{the unique sink tableau contained in $E$.}
$$

\section{A weak Bruhat interval module description of $\bfG_{E}$}
\label{Sec: weak bruhat interval module}

The purpose of this section is to prove that the $H_m(0)$-module $\bfG_E$ is equipped with the structure of weak Bruhat interval module.
In \cref{Subsec: poset structure}, we define a relation $\preceq_{E}$ on $E$ and assign a permutation  $\sfread(T) \in \SG_m$ to each $T \in E$.
Then, we show that $(E,\preceq_{E})$ is a poset which is isomorphic to $([\sfread(T_E), \sfread(T'_E)]_L, \preceq_L)$.
In \cref{Subsec: WBIM module}, by extending this isomorphism, we prove that $\bfG_E$ is isomorphic to $\sfB(\sfread(T_E), \sfread(T'_E))$ as $H_{m}(0)$-modules.

Hereafter, we let $\Des(T_E) = \{d_1< d_2 < \cdots < d_k\}$, $d_0 := 0$, and $d_{k+1} := m$.

\subsection{A poset structure on $E$}
\label{Subsec: poset structure}

To begin with, we introduce necessary notation and definitions.
Define a relation $\preceq_{E}$ on $E$ by 
\[
T_1  \preceq_E  T_2 \quad \text{if $\pi_\sigma \cdot T_1 = T_2$ for some $\sigma \in \SG_m$}.
\]
For each $1 \le j \le k+1$, let 
$$
\tH_j := T_E^{-1}([d_{j-1}+1, d_{j}]).
$$
By considering \cref{Lem: source and sink cond}, one can easily show the following properties:
\begin{enumerate}[label = {\rm (P\arabic*)}]
\item 
For each $1 \le j \le k+1$, the set $\tH_j$ is a horizontal strip, a set of boxes which contains at most one box in each column of $\tyd(\lambda)$.
\item
For any $B \in \tH_{j}$, if $B' \in \tH_{j}$ appears to the right of $B$, then $B'$ is placed weakly above $B$.
\item 
For any $B \in \tH_{j}$, if $B' \in \tH_{j}$ is the leftmost box among the boxes in $\tH_{j}$ placed strictly right of $B$, then $T_E(B) = T_E(B')$ or $B' = B + (0,1)$, that is, $B'$ is placed immediately right of $B$.
\end{enumerate}

\begin{example}\label{Eg: tH_j}
When
\[
T_{E} = \begin{array}{l}
\begin{ytableau}
1 & 2 & 3 & 4 & 5 & 11 & 12 & 14 & 15 \\
6 & 7 & 8 & 10 & 11 & 14 \\
9 & 10 \\
13 & 14
\end{ytableau}
\end{array},
\]
we have that $\Des(T_{E}) = \{5,8,12\}$. In this case, $\tH_j~(1 \le j \le 4)$ are given as follows: 
\[
\begin{array}{c}
\begin{ytableau}
*(red!20) \tH_1 & *(red!20) \tH_1 & *(red!20) \tH_1 & *(red!20) \tH_1 & *(red!20) \tH_1 & *(gray!20) \tH_3 & *(gray!20) \tH_3  & *(magenta!20) \tH_4 & *(magenta!20) \tH_4\\
*(blue!20) \tH_2 & *(blue!20) \tH_2 & *(blue!20) \tH_2 & *(gray!20) \tH_3 & *(gray!20) \tH_3 & *(magenta!20) \tH_4 \\
*(gray!20) \tH_3 & *(gray!20) \tH_3 \\
*(magenta!20) \tH_4 & *(magenta!20) \tH_4
\end{ytableau}
\end{array}.
\]
\end{example}

For each $1\le j \le k+1$, let $\rmw^{(j)}(T)$ be the word obtained by reading the entries of $T$ contained in $\tH_j$ from right to left. 
Note that if an integer $i$ appears multiple times in $\rmw^{(j)}(T)$, then the integer $i$'s are placed consecutively.
We define $\overline{\rmw}^{(j)}(T)$ as the word obtained from $\rmw^{(j)}(T)$ by erasing all $i$'s except one $i$ for each $i$ that appears in $\rmw^{(j)}(T)$.

\begin{definition}\label{def: standard reading}
For $T \in E$, \emph{the standardized reading word $\sfread(T)$ of $T$} is defined to be the word $\overline{\rmw}^{(1)}(T) \overline{\rmw}^{(2)}(T) \cdots \overline{\rmw}^{(k+1)}(T)$ obtained by concatenating $\overline{\rmw}^{(j)}(T)$ for $1 \le j \le k+1$.
\end{definition}

Hereafter, we will identify $\sfread(T)$ with the permutation in $\SG_m$ written in one-line notation.

\begin{example}\label{eg: reading}
We revisit \cref{Eg: tH_j}. 
For each $j=1,2,3,4$, $\rmw^{(j)}(T_{E})$ and $\overline{\rmw}^{(j)}(T_{E})$ are obtained as in \cref{tab: reading}. 
Therefore,
\[
\sfread(T_{E}) = 5 \ 4 \ 3 \ 2 \ 1 \ 8 \ 7 \ 6 \ 12 \ 11 \ 10 \ 9 \ 15 \ 14 \ 13 \in \SG_{15}. 
\]
\end{example}

\begin{table}[t]
$\begin{array}{c|c|c|c|c}
j & 1 & 2 & 3 & 4 \\ \hline
\rmw^{(j)}(T_{E}) 
& 5 \ 4 \ 3 \ 2 \ 1 & 8 \ 7 \ 6 & 12 \ 11 \ 11 \ 10 \ 10 \ 9 & 15 \ 14 \ 14 \ 14 \ 13 \\ \hline
\overline{\rmw}^{(j)}(T_{E})
& 5 \ 4 \ 3 \ 2 \ 1 & 8 \ 7 \ 6 & 12 \ 11 \ 10 \ 9 & 15 \ 14 \ 13 
\end{array}
$
\medskip
\caption{$\rmw^{(j)}(T_{E})$ and $\overline{\rmw}^{(j)}(T_{E})$ in \cref{eg: reading}}
\label{tab: reading}
\end{table}

\begin{lemma}\label{lem: read and order}
Let $T_1, T_2 \in E$.
\begin{enumerate}[label = {\rm (\arabic*)}]
\item Suppose that $i$ is a non-attacking descent of $T_1$.
Then, 
$$
\sfread(\pi_i \cdot T_1) = s_i \ \sfread(T_1)
\quad \text{and} \quad
\sfread(T_1) \prec_L \sfread(\pi_i \cdot T_1).
$$
\item 
If $T_1 \preceq_E  T_2$, then $\sfread(T_1) \preceq_L \sfread(T_2)$ and 
\[
\pi_{\sfread(T_2)\sfread(T_1)^{-1}} \cdot T_1 =
s_{i_p} \cdot \  \cdots \ \cdot s_{i_2} \cdot s_{i_1} \cdot T_1 = T_2,
\]
where $s_{i_p} \cdots s_{i_2} s_{i_1}$ is a reduced expression for $\sfread(T_2)\sfread(T_1)^{-1}$.
\end{enumerate}
\end{lemma}

\begin{proof}
(1) By \cref{def: standard reading}, $s_i \ \sfread(T_1) = \sfread(\pi_i \cdot T_1)$ is obvious.
We will prove that $\sfread(T_1) \prec_L \sfread(\pi_i \cdot T_1)$.

Let $j^{(i)}$ and $j^{(i+1)}$ be the unique indices in $\{1,2,\ldots, k+1\}$ such that $i \in T_1(\tH_{j^{(i)}})$ and $i+1 \in T_1(\tH_{j^{(i+1)}})$, respectively.
We claim that 
\begin{align}\label{eq: claim}
j^{(i)} < j^{(i+1)}.
\end{align}
Since $i$ is a non-attacking descent of $T_1$, we have
\begin{align}\label{eq: r i and r i+1}
r_{\sfb}^{(i)}(T_1) < r_{\sft}^{(i+1)}(T_1).
\end{align}

If $j^{(i)} = j^{(i+1)}$, then the properties (P1)--(P3) written in the first paragraph of this subsection imply that $r_{\sft}^{(i)}(T_1) \ge r_{\sfb}^{(i+1)}(T_1)$.
This contradicts \cref{eq: r i and r i+1}, thus $j^{(i)} \neq j^{(i+1)}$.

Suppose that $j^{(i)} > j^{(i+1)}$.
By \cref{eq: r i and r i+1},
every box in $T_1^{-1}(i+1)$ is placed strictly below $\Bot_i(T_1)$.
In addition, if there exist 
$B \in T_1^{-1}(i+1)$ such that $B$ is placed weakly below and weakly right of $\Bot_i(T_1)$, then $T_E(\Bot_i(T_1)) < T_E(B)$ which implies $j^{(i)} \le j^{(i+1)}$.
This contradicts the assumption $j^{(i)} > j^{(i+1)}$, so  every box in $T_1^{-1}(i+1)$ is placed strictly below and strictly left of $\Bot_i(T_1)$.

Assume that there exist $B_1,B_2 \in \tH_{j^{(i+1)}}$ such that $T_1(B_1) = T_1(B_2)$ and $\row(B_1) \le \row(\Bot_i(T_1)) \le \row (B_2)$.
Since every box in $T_1^{-1}(i+1)$ is placed strictly below and strictly left of $\Bot_i(T_1)$, we have $T_1(B_1) > i+1$.
It follows that $\Gamma_{T_1(B_1)}(T_1)$ passes below 
$\Bot_i(T_1)$.
However, since $j^{(i)} > j^{(i+1)}$, $x > y$ for all $x \in T_E(\tH_{j^{(i)}})$ and $y \in T_E(\tH_{j^{(i+1)}})$, so
$\Gamma_{T_E(B_1)}(T_E)$ passes above $\Bot_i(T_1)$.
This implies that $T_1 \not\sim T_E$, which gives a contradiction.
Thus, we have 
\[
\row(\Bot_i(T_1))< \row(B) \quad \text{for all $B \in \tH_{j^{(i+1)}}$.}
\]

Let $B_L^{(i)}$ be the leftmost box in $\tH_{j^{(i)}}$ and $B_R^{(i+1)}$ the  rightmost box in $\tH_{j^{(i+1)}}$.
Suppose that there exist $B_1, B_2 \in \tH_{j^{(i)}}$ such that $T_1(B_1) = T_1(B_2)$ and $\row(B_1) \le \row(B_R^{(i+1)}) \le \row(B_2)$.
By the above observation, $B_1$ is strictly left of every box in $T_1^{-1}(i)$, so $T_1(B_1) < i < i+1 \le T_1(B_R^{(i+1)})$.
It follows that $\Gamma_{T_1(B_1)}(T_1)$ passes above $B_R^{(i+1)}$.
However, since $j^{(i)} > j^{(i+1)}$, we have $T_E(B_1) > T_E(B_R^{(i+1)})$ and so $\Gamma_{T_E(B_1)}(T_E)$ passes below $B_R^{(i+1)}$.
This implies that $T_1 \not\sim T_E$, which gives a contradiction.
Thus, we have
\begin{align}\label{eq: row of BL BR}
\row(B_L^{(i)})< \row(B_R^{(i+1)}).
\end{align}
In addition, if $\col(B_L^{(i)}) \le \col(B_R^{(i+1)})$, then $T_E(B_L^{(i)}) < T_E(B_R^{(i+1)})$, which implies that $j^{(i)} < j^{(i+1)}$.
Thus, we have 
\begin{align}\label{eq: col of BL BR}
\col(B_R^{(i+1)}) < \col(B_L^{(i)}).
\end{align}

By considering the construction of $\source(T_1)$ in \cref{subsec: source tableau} together with the inequalities $j^{(i+1)} < j^{(i)}$, \cref{eq: row of BL BR}, and \cref{eq: col of BL BR}, we deduce that there exists $p \in \calI(T_E)$ such that $\Gamma_p(T_E)$ passes right of $B_R^{(i+1)}$ and left of $B_L^{(i)}$.
Since $T_1 \in E$, this implies that 
\[
i+1 \le T_1(B_R^{(i+1)}) < T_1(T_E^{-1}(p)) < T_1(B_L^{(i)}) \le i
\]
which gives a contradiction.
Therefore, we obtain \cref{eq: claim}.

Now, combining the definition of $\sfread$ with the claim above yields that $i$ is placed to left of $i+1$ in $\sfread(T_1)$.
Thus, $\sfread(T_1) \prec_L s_i \, \sfread(T_1) = \sfread(\pi_i \cdot T_1)$.

(2) 
Let $\sigma \in \SG_m$ satisfying $\pi_\sigma \cdot T_1 = T_2$.
Take a reduced expression $s_{j_q} \cdots s_{j_2} s_{j_1}$ for $\sigma$.
Note that for all $1 \le r \le q$, $j_r$ cannot be an attacking descent of $\pi_{j_{r-1}} \cdots \pi_{j_{2}} \pi_{j_{1}} \cdot T_1$, otherwise $\pi_\sigma \cdot T_1 = 0$.
That is, $j_r$ is a non-descent or non-attacking descent of $\pi_{j_{r-1}} \cdots \pi_{j_{2}} \pi_{j_{1}} \cdot T_1$ for all $1 \le r \le q$.
Enumerate the indices $r \in [1,q]$ such that $j_r$ is a non-attacking descent of $\pi_{j_{r-1}} \cdots \pi_{j_{2}} \pi_{j_{1}} \cdot T_1$ by
\[
a_1 < a_2 < \cdots < a_p
\]
in increasing order.
By (1), for all $1 \le r \le p$,
\begin{align*}
\sfread(\pi_{a_{r-1}} \cdots \pi_{a_2} \pi_{a_1} \cdot T_1)  
\preceq_L
s_{a_r} \sfread(\pi_{a_{r-1}} \cdots \pi_{a_2} \pi_{a_1} \cdot T_1) 
= \sfread(\pi_{a_{r}} \cdots \pi_{a_2} \pi_{a_1} \cdot T_1).
\end{align*}
This implies that $\sfread(T_1) \preceq_L \sfread(T_2)$ and $s_{a_p} \cdots s_{a_2} s_{a_1}$ is a reduced expression of $\sfread(T_2) \sfread(T_1)^{-1}$. 
By the definition of $a_r$'s, we have that $\pi_{a_p} \cdots \pi_{a_2} \pi_{a_1} \cdot T_1 = \pi_\sigma \cdot T_1$, and hence
\begin{equation*}
\pi_{\sfread(T_2) \sfread(T_1)^{-1}} \cdot T_1 =
\pi_{a_p} \cdots \pi_{a_2} \pi_{a_1} \cdot T_1 = \pi_\sigma \cdot T_1 = T_2.
\qedhere
\end{equation*}
\end{proof}

\begin{theorem}\label{thm: poset isomorphism}
Let $E \in \calE_{\lambda;m}$.
\begin{enumerate}[label = {\rm (\arabic*)}]
\item For any $T \in E$, $T_E \preceq_E T \preceq_E T'_E$.
\item 
$(E,\preceq_{E})$ is a poset which is isomorphic to $([\sfread(T_{E}), \sfread (T'_{E})]_{L},\preceq_{L})$.
\end{enumerate}
\end{theorem}

\begin{proof}
(1)
Let $T \in E$.
Let us prove $T_E \preceq_E T$.
If $T$ is a source tableau, then $T = T_E$ by \cref{thm: uniqueness of source}, thus $T_E \preceq_E T$.
Otherwise, by \cref{Lem: source and sink cond}(1), 
we can choose $i_1 \notin \Des(T)$ such that $\Top_{i_1}(T)$ is strictly below $\Bot_{i_1+1}(T)$.
One can easily see that $i_1$ is non-attacking descent of $s_{i_1} \cdot T$ and so $\pi_{i_1} \cdot (s_{i_1} \cdot T) = T$.
By \cref{thm: preserving}, we have $s_{i_1} \cdot T \in E$. In addition,
$\sfread(s_{i_1} \cdot T) \prec_L \sfread(T)$ by \cref{lem: read and order}(1).
If $s_{i_1} \cdot T$ is a source tableau, then $s_{i_1} \cdot T = T_E$ by \cref{thm: uniqueness of source}, thus $T_E \preceq_E T$.
Otherwise, following a similar procedure as described above, we can choose $i_2 \notin \Des(s_{i_1} \cdot T)$ such that 
\[
\pi_{i_2} \cdot (s_{i_2} s_{i_1} \cdot T) = s_{i_1} \cdot T, \quad s_{i_1} \cdot T \in E, \quad \text{and} \quad \sfread(s_{i_2} s_{i_1} \cdot T) \prec_L \sfread(s_{i_1} \cdot T).
\]
Continuing this process, we have $s_{i_p} \cdots  s_{i_2} s_{i_1} \cdot T = T_E$ for some $p \in \Z_{\ge 0}$ since $E$ is a finite set and the inequalities 
$\sfread(s_{i_p} \cdots s_{i_2} s_{i_1} \cdot T) \prec_L \cdots \prec_L \sfread(s_{i_1} \cdot T) \prec_L \sfread(T)$ ensure that the tableaux $T, s_{i_1} \cdot T, \ldots,  s_{i_p} \cdots s_{i_2} s_{i_1} \cdot T$ are all distinct.
By the choice of $i_1,i_2, \ldots, i_p$, we have $(\pi_{i_1} \pi_{i_2} \cdots \pi_{i_k}) \cdot T_E = T$, thus $T_E \preceq_E T$.

Next, let us prove that $T \preceq_E T'_E$.
If $T$ is a sink tableau, then $T = T'_E$ by \cref{thm: uniqueness of sink}.
Otherwise, by \cref{Lem: source and sink cond}(2), we can choose a non-attacking descent $i_1$ of $T$.
By \cref{lem: read and order}(1), $\sfread(T) \prec_L \sfread(\pi_{i_1} \cdot T)$.
If $\pi_{i_1} \cdot T$ is a sink tableau, then $T \preceq_E T'_E$.
Otherwise, by \cref{Lem: source and sink cond}(2), we can choose a non-attacking descent $i_2$ of $\pi_{i_1} \cdot T$.
Again, by \cref{lem: read and order}(1), $\sfread(\pi_{i_1} \cdot T) \prec_L \sfread(\pi_{i_2} \pi_{i_1} \cdot T)$.
Continuing this process, we have $\pi_{i_{p'}} \cdots  \pi_{i_2} \pi_{i_1} \cdot T = T'_E$ for some $p' \in \Z_{\ge 0}$ since $E$ is a finite set and the inequalities 
$\sfread(T) \prec_L \sfread(\pi_{i_1} \cdot T) \prec_L \cdots \prec_L
\sfread(s_{i_p} \cdots s_{i_2} s_{i_1} \cdot T)$ ensure that the tableaux $T, \pi_{i_1} \cdot T, \ldots,  \pi_{i_p} \cdots \pi_{i_2} \pi_{i_1} \cdot T$ are all distinct.
Therefore, $T \preceq_E T'_E$.

(2) Let $f: E \ra [\sfread(T_{E}), \sfread (T'_{E})]_{L}$ be a map sending $T$ to $\sfread(T)$.
Combining \cref{lem: read and order}(2) with \cref{thm: poset isomorphism}(1) yields that $\sfread(T) \in [\sfread(T_{E}), \sfread (T'_{E})]_{L}$ for all $T \in E$.
Thus, $f$ is well defined.
The injectivity of $f$ immediately follows from the definition of $\sfread$.

Let us prove the surjectivity of $f$.
Take any $\gamma \in [\sfread(T_{E}), \sfread (T'_{E})]_{L}$
and let 
$$
T = \pi_{\gamma \, \sfread (T_{E})^{-1}} \cdot T_E.
$$
We claim that $\gamma = \sfread(T)$.
Let us prove our claim by using mathematical induction on $\ell(\gamma \, \sfread(T_{E})^{-1})$.
When $\ell(\gamma \, \sfread(T_{E})^{-1}) = 0$, we have $\gamma = \sfread(T_{E})$ and $T = T_E$.
Thus, $\gamma = \sfread(T)$.
Suppose that $\ell(\gamma \, \sfread(T_{E})^{-1}) > 0$ and the claim holds for all $\omega \in [\sfread(T_{E}), \sfread (T'_{E})]_{L}$ with $\ell(\omega) < \ell(\gamma)$.
Take any reduced expression $s_{i_p} \cdots s_{i_2} s_{i_1}$ for $\gamma \, \sfread(T_{E})^{-1}$.
Let 
$$
\gamma' = s_{i_p} \gamma 
\quad \text{and} \quad 
T' = \pi_{\gamma' \, \sfread(T_{E})^{-1}} \cdot T_E.
$$
One can easily see that $\gamma' \in [\sfread(T_{E}), \sfread (T'_{E})]_{L}$ and $\ell(\gamma') < \ell(\gamma)$.
By the induction hypothesis, we have $\sfread(T') = \gamma'$.
Since $\sfread(T') = \gamma' \neq \gamma = \sfread(T)$, we have $T \neq T'$.
In addition, since $\pi_{i_p} \cdot T' = T$, the integer $i_p$ is a non-attacking descent of $T'$.
Thus,
\[
\sfread(T) 
= s_{i_p} \, \sfread(T') 
= s_{i_p} \gamma' 
= \gamma.
\]
Here, the first equality follows from \cref{lem: read and order}(1). 

For the remaining part of the proof, we claim that for any $T_1, T_2 \in E$,
\[
T_1 \preceq_E T_2
\quad \text{if and only if} \quad
\sfread(T_1) \preceq_L \sfread(T_2).
\]
The ``only if'' part was proved in \cref{lem: read and order}(2).
To prove ``if'' part, suppose that $\sfread(T_1) \preceq_L \sfread(T_2)$.
Since $T_E \preceq_E T_1$, \cref{lem: read and order}(2) implies that $\sfread(T_E) \preceq_L \sfread(T_1)$.
Therefore, there exists a reduced expression $s_{i_p} \cdots s_{i_2} s_{i_1}$ for $\sfread(T_2) \sfread(T_E)^{-1}$ such that
$s_{i_{p'}} \cdots s_{i_2} s_{i_1} = \sfread(T_1) \sfread(T_E)^{-1}$ for some $1 \le p' \le p$.
Again, by \cref{lem: read and order}(2),
\[
\pi_{i_{p'}} \cdots \pi_{i_2} \pi_{i_1} \cdot T_E = T_1
\quad \text{and} \quad
\pi_{i_p} \cdots \pi_{i_{p'+1}} \pi_{i_{p'}} \cdots  \pi_{i_2} \pi_{i_1} \cdot T_E = T_2.
\]
This implies that $\pi_{i_p} \cdots \pi_{i_{p'+2}} \pi_{i_{p'+1}} \cdot T_1 = T_2$, thus $T_1 \preceq_E T_2$.
\end{proof}

We conclude this subsection by proving the equality in \cref{eq: char of bfG}.

\begin{proposition}\label{Prop: characteristic image of G}
For any $\lambda \vdash n$ and $1 \le m \le n$, $\ch([\bfG_{\lambda;m}]) = \GSm{\lambda}{m}$. 
Consequently, 
$$
\sum_{1 \le m \le n} \ch([\bfG_{\lambda;m}]) = \GS{\lambda}.
$$
\end{proposition}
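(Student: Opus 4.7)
The plan is to build a composition series of $\bfG_{\lambda;m}$ whose factors are precisely the irreducibles $\bfF_{\comp(T)}$, one for each $T \in \IGLTm{\lambda}{m}$. Once such a filtration is in place, applying $\ch$ and using $\ch([\bfF_\alpha]) = F_\alpha$ together with additivity on short exact sequences immediately yields
\[
\ch([\bfG_{\lambda;m}]) = \sum_{T \in \IGLTm{\lambda}{m}} F_{\comp(T)} = \GSm{\lambda}{m},
\]
and the ``Consequently'' statement then follows by summing on $m$, since by construction $\GS{\lambda} = \sum_{m=1}^{n} \GSm{\lambda}{m}$ is the homogeneous decomposition of $\GS{\lambda}$.

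To produce the filtration I would linearly order $\IGLTm{\lambda}{m}$ by a rule compatible with the rewrites $T \mapsto s_i \cdot T$ coming from non-attacking descents. A convenient choice is the lexicographic order $\prec$ on the row-reading word $\rmw(T)$ obtained by reading $T$ row by row from top to bottom, left to right. The decisive property I need is: if $i$ is a non-attacking descent of $T$, then $\rmw(T) \prec \rmw(s_i \cdot T)$. This follows from the observation recorded right after \cref{def: attacking descent}: when $i$ is non-attacking, every $(i+1)$ of $T$ sits in a row strictly below every $i$ of $T$. Therefore, in $\rmw(T)$ all the $i$'s appear before any of the $(i+1)$'s; replacing those $i$'s by $(i+1)$'s and vice versa strictly raises the first position in which the two words disagree, giving $\rmw(T) \prec \rmw(s_i \cdot T)$.

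Enumerate $\IGLTm{\lambda}{m} = \{T_1, T_2, \ldots, T_N\}$ so that $\rmw(T_1) \succ \rmw(T_2) \succ \cdots \succ \rmw(T_N)$, and set $M_k := \C\{T_1,\ldots,T_k\}$. For each $T_j$ with $j \le k$ and each $1 \le i \le m-1$, the action $\bfpi_i \cdot T_j$ is either $0$, $T_j$, or $s_i \cdot T_j$, the last only in the non-attacking descent case; by the monotonicity above, $s_i \cdot T_j$ then equals some $T_{j'}$ with $j' < j$, so it already lies in $M_k$. Hence $0 = M_0 \subset M_1 \subset \cdots \subset M_N = \bfG_{\lambda;m}$ is a chain of $H_m(0)$-submodules. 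In each quotient $M_k/M_{k-1}$, which is spanned by the image of $T_k$, the action reduces to $\pi_i \cdot T_k = T_k$ if $i \notin \Des(T_k)$ and $\pi_i \cdot T_k = 0$ if $i \in \Des(T_k)$ (because an attacking descent gives $0$ already, while a non-attacking descent sends $T_k$ into $M_{k-1}$). Thus $M_k/M_{k-1} \cong \bfF_{\comp(T_k)}$, and the proof is finished by the first paragraph.

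The only non-formal ingredient is the monotonicity of $\rmw$ under the non-attacking swap, and this is exactly where the separation remark following \cref{def: attacking descent} is used; the rest of the argument is a bookkeeping exercise on a triangular filtration.
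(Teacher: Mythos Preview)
Your argument is correct and is the standard way to spell out the details that the paper leaves implicit: the paper simply writes ``We now immediately obtain the following proposition'' without giving a proof, relying on the reader to supply exactly the triangular filtration you describe. Your choice of total order via the lexicographic row-reading word, together with the observation after \cref{def: attacking descent} that all $(i+1)$'s lie in rows strictly below all $i$'s whenever $i$ is a non-attacking descent, is precisely what is needed to make the filtration work and to identify each one-dimensional subquotient with $\bfF_{\comp(T_k)}$.
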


\begin{proof}
Enumerate the equivalence classes in $\calE_{\lambda;m}$ by $E_1, E_2, \ldots, E_p$.
For each $1 \le i \le p$, let $(E_i, \preceq_{E_i}^t)$ be a linear extension of the poset $(E_i, \preceq_{E_i})$.
We define a total order $\preceq$ on $\IGLTm{\lambda}{m}$ by
\[
T_1 \preceq T_2
\quad \text{if $i_1 < i_2$ or ($i_1 = i_2$ and $T_1 \preceq_{E_{i_1}}^t T_2$)},
\]
where $1 \le i_1, i_2 \le p$ such that $T_1 \in E_{i_1}$ and $T_2 \in E_{i_2}$.
Enumerate the elements in $\IGLTm{\lambda}{m}$ 
by $T_1 \preceq T_2 \preceq \cdots \preceq T_{|\IGLTm{\lambda}{m}|}$.
Let
\[
M_0 := \{0\}
\quad \text{and} \quad
M_j := \C
\{T_i \mid 1 \le i \le j \}
\quad \text{for $1 \le j \le |\IGLTm{\lambda}{m}|$.}
\]
Then
\[
\{0\} = M_0 \subseteq M_1 \subseteq \cdots \subseteq M_{|\IGLTm{\lambda}{m}|} = \bfG_{\lambda;m}
\]
is a filtration of $\bfG_{\lambda;m}$ and $M_j / M_{j-1}$ is one-dimensional for each $1 \le j \le |\IGLTm{\lambda}{m}|$. 
Therefore,
$\ch([\bfG_{\lambda;m}]) = \sum_{j=1}^{|\IGLTm{\lambda}{m}|} \ch([M_j / M_{j-1}])
= \sum_{T \in \IGLTm{\lambda}{m}} F_{\comp(T)}
= U_{\lambda;m}$.
\end{proof}

\subsection{A weak Bruhat interval module structure on $\mathbf{G}_E$}
\label{Subsec: WBIM module}

The purpose of this subsection is to prove that $\bfG_E \cong 
\sfB(\sfread(T_E), \sfread(T'_E))$ as $H_m(0)$-modules.

\begin{lemma}\label{lem: Descent cond for T}{\rm (cf. \cite[Lemma 5.5]{20CKNO2})}
Let $T \in E$ and $1 \le i \le m-1$. 
Suppose that $i \in T(\tH_{j_1})$ and $i+1 \in T(\tH_{j_2})$ for some $1 \le j_1, j_2 \le k+1$.
\begin{enumerate}[label = {\rm (\arabic*)}]
\item 
$i \in \Des(T)$ if and only if $j_1 < j_2$.
\item $i \in \Des(T)$ if and only if $i \notin \Des_L(\sfread(T))$.
\end{enumerate}
\end{lemma}


\begin{proof}
If $j_2 \le j_1$, then $i+1$ appears to the left of $i$ in $\sfread(T)$. 
Otherwise, $i+1$ appears to the right of $i$ in $\sfread(T)$. 
It follows that $i \notin \Des_L(\sfread(T))$ if and only if $j_1 < j_2$. Therefore, it suffices to show that (1) holds.

To prove the ``only if'' part of (1), assume that $i$ is a descent of $T$.
By \cref{thm: poset isomorphism}(1), we can take a permutation $\sigma \in \SG_m$ such that
\begin{align*}
T = \pi_{\sigma} \cdot T_E = s_{p_{l}} \cdot \  \cdots \  \cdot s_{p_2} \cdot s_{p_1} \cdot T_E,
\end{align*}
where $l := \ell(\sigma)$ and $s_{p_{l}} \cdots s_{p_2} s_{p_1}$ is a reduced expression for $\sigma$.

We use the induction on $l$ to show the assertion.
In case where $l = 0$, we have $T = T_E$. 
Since $i$ is a descent of $T_E$ and $i \in T_E(\tH_{j_1})$, we have $i+1 \in T_E(\tH_{j_1 + 1})$ and so $j_1 < j_1 + 1 = j_2$.
Suppose that $l = 1$.
Let $\sigma = s_p$ for some $1 \le p \le m-1$.
Then, $T$ and $T_E$ are same except for $p$ and $p+1$.
In addition, $p$ cannot be a descent of $T$.
Therefore, we have only to check the cases where $i = p-1, p+1$.
Assume that $i = p-1$.
Since $T = s_{i+1} \cdot T_E$,
the assumption $i+1 \in T(\tH_{j_2})$ implies that
$i+2 \in T_E(\tH_{j_2})$.
Moreover, since $i+1$ is a descent in $T_E$, $i+1$ is contained in $T_E(\tH_{j_2 - 1})$ by the definition of $\tH_{j}$'s.
Since $i$ is contained in $T_E(\tH_{j_1})$ and $i+1$ is contained in $T_E(\tH_{j_2-1})$, we have $j_1 \le j_2 -1$ and thus $j_1 < j_2$.
The case $i = p+1$ can be proved in the same manner as above.

For the induction step, we assume that $l \ge 2$ and the assertion is true for any $U \in E$ such that $U = \pi_\gamma \cdot T_E$ for some $\gamma \in \SG_m$ satisfying $\ell(\gamma) < l$.
Let $T' = \pi_{\sigma'} \cdot T_E$ with $\sigma' = s_{p_l} \sigma = s_{p_{l-1}} \cdots s_{p_2} s_{p_1}$. 
Note that $p_l$ is not a descent of $T$ and $T$ is identical to $T'$ except for $p_l$ and $p_{l}+1$.
Therefore, it is enough to check the cases where $i = p_l-1, p_l+1$.
Since the case $i = p_l + 1$ can be proved in the same manner as in the case $i = p_l -1$, we only deal with the  case $i = p_l -1$.

Suppose that $i = p_l - 1$ is a descent of $T$.
Let $q \in [1, k+1]$ be the index satisfying $p_l + 1 \in T(\tH_q)$.
Then
\[
p_l - 1 \in T'(\tH_{j_1}), \quad 
p_l \in T'(\tH_{q}), \quad \text{and} \quad
p_l + 1 \in T'(\tH_{j_2}).  
\]
Since $p_l$ is a descent of $T'$, the induction hypothesis implies that $q < j_2$.
If $p_l - 1$ is a descent of $T'$, then the induction hypothesis again implies that $j_1 < q$, thus $j_1 < j_2$.
For the remaining case, assume that $p_l-1$ is not a descent of $T'$.
Then, we have
$$
r_\sfb^{(p_l)}(T') \le
r_\sft^{(p_l-1)}(T').
$$
If $r_\sfb^{(p_l)}(T') = r_\sft^{(p_l-1)}(T')$, then $j_1 \le q$ by the definition of $\tH_j$'s.
Since $q < j_2$, we have $j_1 < j_2$.
Assume that $r_\sfb^{(p_l)}(T') < r_\sft^{(p_l-1)}(T')$.
Then $T'' := s_{p_l - 1} \cdot T'$ is contained in $E$ and $\pi_{p_l - 1} \cdot T'' = T'$.
Note that 
\[
p_l - 1 \in T''(\tH_{q}), \quad 
p_l \in T''(\tH_{j_1}), \quad \text{and} \quad
p_l + 1 \in T''(\tH_{j_2}).
\]
Combining the assumption that $p_l-1$ is a descent of $T$ with the equality $T = \pi_{p_l} \cdot T'$ yields that $r_\sft^{(p_l-1)}(T') < r_\sfb^{(p_l+1)}(T')$.
Since $T'' = s_{p_l - 1} \cdot T'$, we have that
\[
r_\sft^{(p_l)}(T'') <
r_\sfb^{(p_l+1)}(T'').
\]
Therefore, $p_l$ is a descent of $T''$.
Since $T'' \preceq_E T$, by
\cref{thm: poset isomorphism}(2), there exists $\sigma'' \in \SG_m$ such that $\pi_{\sigma''} \cdot T_E = T''$ and $\ell(\sigma'') < l$. Hence, by the induction hypothesis, we have that $j_1 < j_2$.

Next, we prove the ``if'' part of (1).
Suppose that $j_{1}<j_{2}$, but $i$ is not a descent of $T$.
If $\Top_i(T)$ and $\Bot_{i+1}(T)$ are in the same row in $T$, then $\Bot_{i+1}(T)$ lies to the immediate right of $\Top_i(T)$.
This, together with the properties (P1)--(P3), implies that $\Top_i(T)$ is the rightmost box of $\tH_{j_1}$ and $\Bot_{i+1}(T)$ is the leftmost box of $\tH_{j_2}$.
Note that for each $1 \le t \le k$, the rightmost box of $\tH_t$ is located strictly above the leftmost box of $\tH_{t+1}$.
Therefore, there exists $j_1 < t_0 < j_2$ such that the leftmost box in $\tH_{t_0}$ is located strictly below $\Top_i(T)$ and the rightmost box in $\tH_{t_0}$ is located strictly above $\Bot_{i+1}(T)$.
By the definition of $\tH_{t_0}$, there exists $p \in T_E(\tH_{t_0})$ such that $p \in \calI(T_E)$ and 
\[
r^{(p)}_{\sft}(T_E) < r^{(i)}_{\sft}(T) = r^{(i+1)}_{\sfb}(T) < r^{(p)}_{\sfb}(T_E).
\]
Since $\Top_i(T)$ (resp. $\Bot_{i+1}(T)$)  is contained in $\tH_{j_1}$ (resp. $\tH_{j_2}$) and $j_1 < t_0 < j_2$, we have $T_E(\Top_i(T)) < p < T_E(\Bot_{i+1}(T))$.
It follows that the lattice path $\tGam_{p}(T_E)$ passes through  $\mathtt{VL}(\grco{r_\sft^{(i)}-1}{c_\sft^{(i)}}, \grco{r_\sft^{(i)}}{c_\sft^{(i)}})$. 
In addition, since $T$ and $T_E$ are in the same class $E$, there exists $p' \in \calI(T)$ such that $\tGam_{p'}(T) = \tGam_{p}(T_E)$.
By the definition of $\tGam_{p'}(T)$, it follows that $i < p' < i+1$, which is absurd.
Thus, $\Bot_{i+1}(T)$ is strictly above $\Top_i(T)$.
Let $T' := s_i \cdot T$.
It is clear that $T'$ is an increasing gapless tableau, $i \in \Des(T')$, $i \in T'(\tH_{j_2})$, and $i+1 \in T'(\tH_{j_1})$.
Thus, by the ``only if'' part, we have $j_2 < j_1$, which contradicts the assumption $j_1< j_2$.
Hence, we conclude that $i$ is a descent of $T$.
\end{proof}

Now, we are ready to prove the main theorem of this subsection.

\begin{theorem}\label{thm: WBIM for G_E}
For any $E \in \calE_{\lambda;m}$, as $H_m(0)$-modules,
\[
\bfG_E \cong 
\sfB(\sfread(T_E), \sfread(T'_E)).
\]
\end{theorem}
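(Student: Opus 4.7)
The strategy, following \cite[Section~4]{22JKLO}, is to construct a surjective $H_m(0)$-module homomorphism $\eta: \bfP_{\bal_E} \to \bfG_E$ and to match its kernel with that of a natural projection onto $\sfB(\sfread(T_E), \sfread(T'_E))$.

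To define $\eta$, set $\eta(\scrT_{\bal_E}) := T_E$ and extend by $H_m(0)$-linearity; this is permissible because $\bfP_{\bal_E}$ is cyclically generated by $\scrT_{\bal_E}$. Well-definedness reduces to verifying that every relation forced on $\scrT_{\bal_E}$ by the ribbon tableau structure of $\bal_E$ also holds on $T_E$: concretely, for each $1 \le i \le m-1$ one checks that the local pattern in $\trd(\bal_E)$ dictating the action of $\pi_i$ on $\scrT_{\bal_E}$ governs the action of $\pi_i$ on $T_E$ in the same way. Surjectivity of $\eta$ is immediate from \cref{thm: poset isomorphism}, since every $T \in E$ equals $\pi_\sigma \cdot T_E = \eta(\pi_\sigma \cdot \scrT_{\bal_E})$ for some $\sigma \in \SG_m$.

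To compute $\bfP_{\bal_E}/\ker(\eta)$, invoke the isomorphism $\bfP_{\bal_E} \cong \sfB(w_0((\bal_E)_\bullet^\rmc), w_0 w_0((\bal_E)_\odot))$ recorded in \cref{subsec: WBIM}. Two combinatorial identities are required: (i)~$\sfread(T_E) = w_0((\bal_E)_\bullet^\rmc)$, which follows from the explicit description of $\bal_E$ together with the observation that each $\overline{\rmw}^{(j)}(T_E)$ is the decreasing run of all distinct values on the horizontal strip $\tH_j$; and (ii)~$\sfread(T'_E) \preceq_L w_0 w_0((\bal_E)_\odot)$, which follows from \cref{lem: read and order}(2) since $T'_E$ lies in the image of $\eta$. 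Identity~(ii) then lets the projection $\pr$ of~\cref{eq: definition of pr} be applied, yielding a surjective $H_m(0)$-module map
\[
\Pi : \bfP_{\bal_E} \xrightarrow{\;\sim\;} \sfB(\sfread(T_E), w_0 w_0((\bal_E)_\odot)) \xrightarrow{\;\pr\;} \sfB(\sfread(T_E), \sfread(T'_E)).
\]

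It remains to show $\ker(\eta) = \ker(\Pi)$. Using \cref{thm: poset isomorphism}, a basis element $\pi_\sigma \cdot \scrT_{\bal_E}$ is sent by $\eta$ to $\pi_\sigma \cdot T_E$, which is nonzero precisely when the corresponding permutation on the Bruhat side lies in $[\sfread(T_E), \sfread(T'_E)]_L$; this is exactly the criterion for $\Pi$ not to vanish on that basis element. Hence $\ker(\eta) = \ker(\Pi)$, and passing to quotients yields $\bfG_E \cong \sfB(\sfread(T_E), \sfread(T'_E))$, with the induced map $\widetilde f$ sending each $T \in E$ to $\sfread(T)$. The main obstacle will be verifying (i), (ii), and the kernel equality in full detail, all of which require a precise account of how the descent structure of $T_E$ and the shape $\bal_E$ encode reduced-word data for $\sfread(T_E)$ in $\SG_m$, and how the $H_m(0)$-action on $\IGLTm{\lambda}{m}$ transports the left weak Bruhat order across $\widetilde f$.
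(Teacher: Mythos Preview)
Your overall strategy matches the paper's: build a surjection $\eta:\bfP_{\bal_E}\to\bfG_E$, identify $\bfP_{\bal_E}$ with a weak Bruhat interval module via $\Phi$, project by $\pr$, and compare kernels. The serious gap is in your construction of $\eta$.

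You propose to set $\eta(\scrT_{\bal_E})=T_E$ and ``extend by $H_m(0)$-linearity,'' claiming that well-definedness reduces to matching the action of each $\pi_i$ on $\scrT_{\bal_E}$ with its action on $T_E$. That local check is \emph{not} sufficient: what is required is that the entire left annihilator ideal of $\scrT_{\bal_E}$ in $H_m(0)$ annihilate $T_E$. For a generalized composition $\bal_E$ that is a genuine direct sum (not a single composition), there exist indices $i$ with $\pi_i\cdot\scrT_{\bal_E}=s_i\cdot\scrT_{\bal_E}$ (neither fixed nor killed), so the annihilator is not generated by the elements $\pi_i-1$ and $\pi_i$ visible at the generator; relations involving longer words must also be verified, and there is no shortcut here. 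The paper circumvents this entirely by defining $\eta$ \emph{explicitly on every basis element} $\scrT\in\SRT(\bal_E)$: it constructs a filling $T_\scrT$ of $\tyd(\lambda)$ by distributing the entries of the $j$th column of $\scrT$ into the horizontal strip $\tH_j$, and then proves directly (\cref{prop: surj module hom}, via the key descent lemma \cref{lem: Descent cond for T}) that this assignment commutes with every $\pi_i$. This is where the real work lies, and your proposal skips it.

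The same gap resurfaces in your kernel comparison and in claim~(ii). Both rely implicitly on the identity $\itread(\scrT)=\sfread(\eta(\scrT))$ whenever $\eta(\scrT)\neq 0$, which the paper reads off from the explicit description of $T_\scrT$ (see \cref{eq: scrT and ker}). Without that identity you cannot conclude that $\eta(\scrT)\neq 0$ exactly when $\itread(\scrT)\in[\sfread(T_E),\sfread(T'_E)]_L$, nor that $\sfread(T'_E)\preceq_L w_0 w_0((\bal_E)_\odot)$. In short: the explicit combinatorial construction of $T_\scrT$ and the proof of \cref{lem: Descent cond for T} are not optional scaffolding but the substance of the argument; your outline would need to reproduce something equivalent to them.
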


\begin{proof}
Let $\widetilde{f}: \bfG_E \ra \sfB(\sfread(T_E), \sfread(T'_E))$ be the linear map sending $T \mapsto \sfread(T)$ for all $T \in E$.
By \cref{thm: poset isomorphism}(2), $\widetilde{f}$ is a bijection, so it suffices to show that
\[
\widetilde{f}(\pi_i \cdot T) = \pi_i \cdot \widetilde{f}(T)
\quad \text{for any $1 \le i \le m-1$ and $T \in E$.}
\]

Take any $1 \le i \le m-1$ and $T \in E$.
By \cref{lem: Descent cond for T}(2), if $i \notin \Des(T)$, then $i \in \Des_L(\sfread(T))$, thus
\[
\widetilde{f}(\pi_i \cdot T) = \widetilde{f}(T) = \sfread(T) = \pi_i \cdot \sfread(T) = \pi_i \cdot \widetilde{f}(T).
\]
Suppose that $i$ is a non-attacking descent of $T$.
Then \cref{lem: read and order}(1) implies that
$\sfread(T) \preceq_L \sfread(\pi_i \cdot T) 
= s_i \  \sfread(T)$, thus
\[
\widetilde{f}(\pi_i \cdot T) 
= \sfread(\pi_i \cdot T) 
= s_i \  \sfread(T)
= \pi_i\cdot \sfread(T) 
= \pi_i \cdot \widetilde{f}(T).
\]
Suppose that $i$ is an attacking descent of $T$.
By \cref{lem: Descent cond for T}(2), we have $i \notin \Des_L(\sfread(T))$ and so
\[
\pi_i \cdot \sfread(T) = 0
\quad \text{or} \quad
\pi_i \cdot \sfread(T) = s_i \  \sfread(T) \in [\sfread(T_{E}), \sfread (T'_{E})]_{L}.
\]
Assume that $\pi_i \cdot \sfread(T) = s_i \  \sfread(T)$.
Then by \cref{thm: poset isomorphism}(2), there exists $T' \in E$ such that $\sfread(T') = s_i \  \sfread(T)$.
By the definition of $\sfread(T)$, we have $T' = s_i \cdot T$.
However, from the assumption that $i$ is an attacking descent of $T$, one can easily see that $s_i \cdot T$ is not in $E$, which gives a contradiction.
It follows that $\pi_i \cdot \sfread(T) = 0$.
On the other hand, since $i$ is an attacking descent of $T$,  $\widetilde{f}(\pi_i \cdot T) = 0$, thus
$\widetilde{f}(\pi_i \cdot T) = 0 = \pi_i \cdot \widetilde{f}(T)$.
\end{proof}

\section{The projective cover of $\bfG_E$}\label{Sec: proj cover}

The purpose of this section is to find the projective cover of $\bfG_E$.
To begin with, we introduce the necessary terminology.

Let $A,B$ be finitely generated $H_m(0)$-modules.
A surjective $H_m(0)$-module homomorphism $f:A\to B$ is called an \emph{essential epimorphism} if an $H_m(0)$-module homomorphism $g: X\to A$ is surjective  
whenever $f \circ g:X\to B$ is surjective.
A \emph{projective cover} of $A$ is an essential epimorphism $f:P\to A$ with $P$ a projective $H_m(0)$-module.
The following lemma is useful when determining whether a surjective $H_m(0)$-module homomorphism is an essential epimorphism.

\begin{lemma}{\rm (\cite[Proposition 3.6]{95ARS})}\label{lem: ess epi}
The following are equivalent for an epimorphism $f:A \ra B$, where $A$ and $B$ are finitely generated modules over a left artin ring.
\begin{enumerate}[label = {\rm (\alph*)}]
\item $f$ is an essential epimorphism.
\item $\ker(f) \subset \rad(A)$.
\end{enumerate}
\end{lemma}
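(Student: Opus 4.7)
The plan is to establish the equivalence by two short direct arguments, leveraging two standard facts about a finitely generated module $A$ over a left artin ring: that $\rad(A)$ equals the intersection of all maximal submodules of $A$, and that Nakayama's lemma holds in the form $A = N + \rad(A) \Rightarrow A = N$ for any submodule $N$ of $A$. Both facts follow from the fact that $A/\rad(A)$ is a semisimple module of finite length.

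For $(a) \Rightarrow (b)$ I would argue by contraposition. If $\ker(f) \not\subseteq \rad(A)$, then by the first fact above there exists a maximal submodule $M \subsetneq A$ with $\ker(f) \not\subseteq M$. Maximality forces $M + \ker(f) = A$. Let $\iota: M \hookrightarrow A$ be the inclusion; then
\[
(f \circ \iota)(M) \;=\; f(M) \;=\; f(M + \ker(f)) \;=\; f(A) \;=\; B,
\]
so $f \circ \iota$ is surjective while $\iota$ is not, contradicting the essential epimorphism hypothesis on $f$. Conversely, for $(b) \Rightarrow (a)$, assume $\ker(f) \subseteq \rad(A)$ and let $g: X \to A$ be an $H_m(0)$-module map with $f \circ g$ surjective. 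For any $a \in A$, pick $x \in X$ with $f(g(x)) = f(a)$; then $a - g(x) \in \ker(f) \subseteq \rad(A)$, which shows $A = g(X) + \rad(A)$. Nakayama's lemma then gives $A = g(X)$, so $g$ is surjective, establishing that $f$ is essential.

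There is essentially no obstacle here: this is a classical characterization, and the only nontrivial input is Nakayama's lemma for finitely generated modules over a left artin ring. In the intended application $A = \bfP_{\bal_E}$ and $B = \bfG_E$ are finite-dimensional $\C$-vector spaces, hence finitely generated over the finite-dimensional (and in particular left artin) algebra $H_m(0)$, so the hypotheses of the lemma are automatically satisfied. Thus to verify that $\eta: \bfP_{\bal_E} \to \bfG_E$ is a projective cover it will suffice, via this lemma, to check the containment $\ker(\eta) \subseteq \rad(\bfP_{\bal_E})$.
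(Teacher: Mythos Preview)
Your proof is correct and is the standard argument for this classical characterization. Note, however, that the paper does not give its own proof of this lemma: it is merely quoted from \cite[Proposition 3.6]{95ARS} and used as a black box. So there is no ``paper's proof'' to compare against; you have simply supplied the textbook argument that the reference would contain. One minor slip: in the $(b)\Rightarrow(a)$ direction you wrote ``$H_m(0)$-module map'' where you should have said ``module homomorphism over the given left artin ring,'' since the lemma is stated in that generality; the argument is of course unaffected.
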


According to \cref{lem: ess epi}, knowing membership conditions of $\ker(f)$ and $\rad(A)$ can help determine whether $f$ is an essential epimorphism or not. 
To study a membership condition for $\rad(A)$, let us collect some definitions and notation.

Let $\bal$ be a generalized composition.
For any $\scrT \in \SRT(\bal)$, let $\itread(\scrT)$ be the word obtained from $\scrT$ by reading the entries from left to right starting with the bottom row.
Combining \cite[Theorem 3.3 and Proposition 5.1]{16Huang} with \cite[Theorem 1(3)]{22JKLO}, one can see that the $\C$-linear map 
\begin{align}\label{eq: Phi map}
\sfLread: \bfP_\bal \ra \sfB(w_0(\bal_\bullet^\rmc), w_0 w_0(\bal_\odot)), \quad 
\scrT \mapsto \itread(\scrT) \quad \text{for $\scrT \in \SRT(\bal)$}
\end{align}
is an $H_m(0)$-module isomorphism.
In addition, if we define a partial order $\preceq_{\SRT(\bal)}$ on $\SRT(\bal)$ by 
\[
\scrT_1  \preceq_{\SRT(\bal)}  \scrT_2 \quad \text{if $\pi_\sigma \cdot \scrT_1 = \scrT_2$ for some $\sigma \in \SG_m$},
\]
then the map
\begin{align*}
\sfLread': (\SRT(\bal), \preceq_{\SRT(\bal)}) \ra ([w_0(\bal_\bullet^\rmc), w_0 w_0(\bal_\odot)]_L, \preceq_L),
\ 
\scrT \mapsto \Phi(\scrT) \  \text{for $\scrT \in \SRT(\bal)$}
\end{align*}
is an order isomorphism.
For the definitions of $\bal_\bullet$ and $\bal_\odot$, see \cref{eq: bal bullet and odot}.

Let $\scrT_{\bal}^{\bullet}$ and 
$\scrT_{\bal}^{\odot}$ be the unique standard ribbon tableaux of shape $\bal$ satisfying 
\begin{align*}
\itread(\scrT_{\bal}^{\bullet}) = w_0(\bal_\bullet^\rmc)
\quad \text{and} \quad 
\itread(\scrT_{\bal}^{\odot})  = w_0(\bal_\odot^\rmc),
\end{align*}
respectively.
Considering the isomorphism $\Phi$ defined in \cref{eq: Phi map}, one can see that
\[
\itread(\scrT_{\bal}^{\bullet}) = \itread(\scrT_{\bal_\bullet})
\quad \text{and} \quad 
\itread(\scrT_{\bal}^{\odot})  = \itread(\scrT_{\bal_\odot}).
\]
For the definitions of $\scrT_{\bal_\bullet}$ and $\scrT_{\bal_\odot}$, see the last paragraph in \cref{subsec: Proj module}.
For instance, if $\bal = (2,1) \RS (1,1)$, then $w_0(\bal_\bullet^\rmc) = 21345$ and $w_0(\bal_\odot^\rmc) = 21435$, therefore
$$
\scrT_{\bal}^{\bullet} = 
\begin{array}{l}
\begin{ytableau}
\none & \none & 4 & 5 \\
1 & 3 \\
2
\end{ytableau}
\end{array} 
\quad \text{and} \quad
\scrT_{\bal}^{\odot} = 
\begin{array}{l}
\begin{ytableau}
\none & \none & 3 & 5 \\
1 & 4 \\
2
\end{ytableau}
\end{array}.
$$
It follows from the definitions of $\scrT_{\bal}^{\bullet}, \scrT_{\bal}^{\odot}$ that $\scrT_{\bal}^{\bullet} \preceq_{\SRT(\bal)} \scrT_{\bal}^{\odot}$.
Thus, we can define 
$$
[\scrT_{\bal}^{\bullet}, \scrT_{\bal}^{\odot}] := \{\scrT \in \SRT(\bal) \mid 
\scrT_{\bal}^{\bullet} \preceq_{\SRT(\bal)} \scrT \preceq_{\SRT(\bal)} \scrT_{\bal}^{\odot}\}.
$$

Under a special assumption on $\bal$, Choi, Kim, Nam, and Oh \cite[Lemma 5.9]{20CKNO2} provided a necessary condition for standard ribbon tableaux of shape $\bal$ to be included in the radical of $\bfP_\bal$.
It motivates us to state the following lemma which plays a key role in proving the main theorem of this section.
To avoid excessive overlap with their proof, we omit some part of the proof in the following lemma.

\begin{lemma}\label{Lem: calT in Rad}
Let $\bal$ be a generalized composition of $m$ and $\scrT$ be a standard ribbon tableau of shape $\bal$. 
If $\scrT \not\preceq_{\SRT(\bal)} \scrT_{\bal}^{\odot}$, then $\scrT \in \rad(\bfP_{\bal})$.
\end{lemma}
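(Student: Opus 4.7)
The plan is to transfer the problem to the weak Bruhat interval module via the $H_m(0)$-module isomorphism $\sfLread$ of \cref{eq: Phi map}. By the order isomorphism \cref{eq: order isom Phi}, the hypothesis $\scrT \not\preceq_{\SRT(\bal)} \scrT_\bal^\odot$ is equivalent to $\itread(\scrT) \not\preceq_L w_0(\bal_\odot^\rmc)$, so it suffices to show that every $\gamma \in [w_0(\bal_\bullet^\rmc), w_0 w_0(\bal_\odot)]_L$ with $\gamma \not\preceq_L w_0(\bal_\odot^\rmc)$ belongs to $\rad(\sfB(w_0(\bal_\bullet^\rmc), w_0 w_0(\bal_\odot)))$.

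Next, I plan to characterize the radical via the surjections onto the simple constituents of the top. By \cref{thm: bfP isom to calP}, $\bfP_\bal/\rad(\bfP_\bal) \cong \bigoplus_{\beta \in [\bal]} \bfF_\beta$, hence $\rad(\bfP_\bal) = \bigcap_{\beta \in [\bal]} \ker \psi_\beta$ where $\psi_\beta : \bfP_\bal \twoheadrightarrow \bfF_\beta$ are the canonical surjections. The main computation is to describe each $\psi_\beta$ on the WBIM basis. Starting from the minimum $w_0(\bal_\bullet^\rmc)$ and applying $\pi_i$'s along a reduced-word ascending path stays inside the interval, so $w_0(\bal_\bullet^\rmc)$ cyclically generates the module. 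Surjectivity of $\psi_\beta$ then forces $\psi_\beta(w_0(\bal_\bullet^\rmc)) = c_\beta v_\beta$ for some $c_\beta \neq 0$. For any $\gamma$ in the interval, writing $\gamma = \pi_\sigma \cdot w_0(\bal_\bullet^\rmc)$ with $\sigma = \gamma \cdot w_0(\bal_\bullet^\rmc)^{-1}$ of additive length gives $\psi_\beta(\gamma) = c_\beta \, \pi_\sigma v_\beta$. Since $\pi_i v_\beta = v_\beta$ when $i \notin \set(\beta)$ and $0$ otherwise, $\pi_\sigma v_\beta$ is nonzero precisely when $\sigma$ lies in the parabolic subgroup $\langle s_i : i \in \set(\beta^\rmc) \rangle$, i.e., when $\sigma \preceq_L w_0(\beta^\rmc)$. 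Because $w_0(\bal_\bullet^\rmc)$ itself lies in this parabolic (as $\set(\bal_\bullet^\rmc) \subseteq \set(\beta^\rmc)$ for every $\beta \in [\bal]$), this condition is in turn equivalent to $\gamma \preceq_L w_0(\beta^\rmc)$.

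To conclude, I observe that for every $\beta \in [\bal]$ the near-concatenation structure gives $\set(\bal_\odot) \subseteq \set(\beta)$, hence $\set(\beta^\rmc) \subseteq \set(\bal_\odot^\rmc)$ and so $w_0(\beta^\rmc) \preceq_L w_0(\bal_\odot^\rmc)$. Therefore $\gamma \not\preceq_L w_0(\bal_\odot^\rmc)$ forces $\gamma \not\preceq_L w_0(\beta^\rmc)$ for every $\beta \in [\bal]$, so $\psi_\beta(\gamma) = 0$ for all such $\beta$, and thus $\gamma \in \rad$. The main obstacle will be making precise the description of the surjections $\psi_\beta$ in the WBIM basis and verifying that $w_0(\bal_\bullet^\rmc)$ cyclically generates $\bfP_\bal$; this requires combining the direct-sum decomposition from \cref{thm: bfP isom to calP}(2) with the weak Bruhat interval module structure from \cite{22JKLO} and carefully tracking reduced expressions through the weak Bruhat order to ensure the ascending paths remain inside the interval $[w_0(\bal_\bullet^\rmc), w_0 w_0(\bal_\odot)]_L$.
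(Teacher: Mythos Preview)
Your argument is correct and, at its core, coincides with the paper's: both translate the hypothesis $\scrT \not\preceq_{\SRT(\bal)} \scrT_{\bal}^{\odot}$ via $\Phi$ into $\gamma = \itread(\scrT) \not\preceq_L w_0(\bal_\odot^\rmc)$, i.e., $\gamma \notin W_{\set(\bal_\odot^\rmc)}$, and then deduce membership in the radical. The difference is one of packaging. The paper takes a reduced expression $s_{i_l}\cdots s_{i_1}$ for $\sigma = \gamma\,w_0(\bal_\bullet^\rmc)^{-1}$, observes that the hypothesis forces some $i_j \in \set(\bal_\odot) = \Des(\scrT_{\bal}^{\odot})$, and then invokes \cite[Lemma~5.9]{20CKNO2} as a black box to conclude $\scrT \in \rad(\bfP_{\bal})$. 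You instead supply that black box directly: using $\bfP_{\bal}/\rad(\bfP_{\bal}) \cong \bigoplus_{\beta \in [\bal]} \bfF_\beta$, you identify $\rad(\bfP_{\bal})$ with $\bigcap_\beta \ker\psi_\beta$ and compute $\psi_\beta(\gamma) = c_\beta\,\pi_\sigma v_\beta$, which vanishes unless $\sigma$ (equivalently $\gamma$) lies in the parabolic $W_{\set(\beta^\rmc)} \subseteq W_{\set(\bal_\odot^\rmc)}$. Your route is thus more self-contained and makes the role of the parabolic structure transparent, while the paper's is shorter by outsourcing this step; the underlying mechanism---that any reduced word for $\sigma$ must use a generator $s_i$ with $i \in \set(\bal_\odot) \subseteq \set(\beta)$, killing $v_\beta$---is identical.
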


\begin{proof}
Given $\scrT \in \SRT(\bal)$, 
let $s_{i_l} \cdots s_{i_2} s_{i_1}$ be a reduced expression for the permutation $\itread(\scrT) {\itread}(\scrT_{\bal}^{\bullet})^{-1}$ in $\SG_m$.
Considering the equality $\itread(\scrT_{\bal}^{\bullet}) = w_0(\bal_\bullet^\rmc)$ together with the order isomorphism $\sfLread'$, 
we have that $\scrT = \pi_{\itread(\scrT) {\itread}(\scrT_{\bal}^{\bullet})^{-1}} \cdot \scrT_{\bal}^{\bullet}$.
By following the proof of \cite[Lemma 5.9]{20CKNO2}, one can see that 
\begin{align}\label{eq: rad membership}
\scrT \in \rad(\bfP_{\bal})
\quad
\text{
if $i_j \in \Des(\scrT_{\bal}^{\odot})$ for some $1 \le j \le l$,} 
\end{align}
where $\Des(\scrT) := \{i \in [1,n-1] \mid \text{$i$ appears weakly below $i+1$ in $\scrT$}  \}$.

On the other hand, the equality $\itread(\scrT_{\bal}^{\odot})  = w_0(\bal_\odot^\rmc)$ implies that 
$\itread(\scrT) \preceq_{L} \itread(\scrT_{\bal}^{\odot})$
if and only if $i_j \in \set(\bal_\odot^\rmc)$ for all $1\le j \le l$.
From the definition of $\scrT_{\bal}^{\odot}$, it follows that $\Des(\scrT_{\bal}^{\odot}) = \set(\bal_\odot)$.
Putting these together, we have that
if $\itread(\scrT) \not\preceq_{L} \itread(\scrT_{\bal}^{\odot})$, then $i_j \in \Des(\scrT_{\bal}^{\odot})$ for some $1 \le j \le l$.
Combining this observation with \cref{eq: rad membership} yields that if $\itread(\scrT) \not\preceq_{L} \itread(\scrT_{\bal}^{\odot})$, then $\scrT \in \rad(\bfP_{\bal})$.
Now, considering the order isomorphism $\sfLread|_{\SRT(\bal)}^{-1}$, we see that if $\scrT \not\preceq_{\SRT(\bal)} \scrT_{\bal}^{\odot}$, then $\scrT \in \rad(\bfP_{\bal})$.
\end{proof}

Next, let us introduce the notation needed to describe the projective cover of $\bfG_E$.
Recall that $\Des(T_E) = \{d_1< d_2 < \cdots < d_k\}$, $d_0 = 0$, and $d_{k+1} = m$.
Let $\bal^{(1)} := (d_1)$.
For $1 < j \le k+1$, define
\begin{align}\label{Def: bal^(j+1)}
\bal^{(j)} :=
\begin{cases}
\bal^{(j-1)} \cdot (d_{j} - d_{j-1}) & 
\text{if $\Bot_{d_{j-2}+1}(T_E)$ is weakly left of $\Top_{d_{j}}(T_E)$,} \\[1ex]
\bal^{(j-1)} \RS (d_{j} - d_{j-1}) &
\text{if $\Bot_{d_{j-2}+1}(T_E)$ is strictly right of $\Top_{d_{j}}(T_E)$.}
\end{cases}
\end{align}
For the definition of $\bal^{(j-1)} \cdot (d_{j} - d_{j-1})$, see \cref{eq: bal odot beta}.
Let 
\begin{equation}\label{eq: generalized composition}
\bal_E := \bal^{(k+1)}.
\end{equation}
Given $\scrT \in \SRT(\bal_E)$, we define $T_{\scrT}$ to be the filling of $\tyd(\lambda)$ whose boxes in each $\tH_j~(1 \le j \le k+1)$ are filled with the entries of the $j$th column of $\scrT$ in the following manner:
\begin{enumerate}[label = {\rm (\roman*)}]
\item Let $\epsilon_1 < \epsilon_2 < \cdots < \epsilon_l$ be the entries of the $j$th column of $\scrT$ and let $C_1, C_2, \ldots, C_d$ be the connected components of $\tH_j$ such that $C_i$ is left of $C_{i+1}$ for $1 \le i < d$.
\item Let $c_0 := 1$ and let $c_i := \sum_{p=1}^{i} |C_p| - i + 1$ for $1 \le i \le d$.
\item For each $1 \le i \le d$, fill $C_i$ with $\epsilon_{c_{i-1}}, \epsilon_{c_{i-1} + 1}, \ldots, \epsilon_{c_{i}}$ from left to right.
\end{enumerate}
For later use, we notice that $s_i \cdot T_{\scrT} = T_{s_i \cdot \scrT}$ for any $i \in [1, m-1]$ and $\scrT \in \SRT(\bal_E)$.

Now, we define a $\C$-linear map $\eta: \bfP_{\bal_E} \ra \bfG_E$ by
\[
\scrT \mapsto \begin{cases}
T_{\scrT} & \text{if $T_{\scrT}$ is contained in $E$}, \\
0 & \text{otherwise}
\end{cases}
\]
for $\scrT \in \SRT(\bal_E)$ and extending it by linearity.

\begin{example}\label{Eg: eta map}
(1)
Let 
$$
T_E = \begin{array}{c}
\begin{ytableau}
1 & 2 & 3 & 4\\
2 & 3 \\
5 \\
6
\end{ytableau}
\end{array}.
$$
Note that $\Des(T_E) = \{1,2,4,5\}$.
The following figure illustrates $\tH_j$:
\[
\begin{array}{c}
\begin{ytableau}
*(red!20) \tH_1 & *(blue!20) \tH_2 & *(gray!20) \tH_3 & *(gray!20) \tH_3 \\
*(blue!20) \tH_2 & *(gray!20) \tH_3 \\
*(magenta!20) \tH_4\\
*(cyan!20) \tH_5
\end{ytableau}
\end{array}
\]

On the other hand, one can see that $\bal_E = (1,1,2) \RS (1,1)$.
Let
\[
\scrT_0 = \begin{array}{c}
\begin{ytableau}
\none & \none & \none & 5 & 6\\
\none & \none  & 3 \\
1 & 2 & 4
\end{ytableau}
\end{array},
\qquad
\scrT_1 = \begin{array}{c}
\begin{ytableau}
\none & \none & \none & 3 & 4\\
\none & \none  & 5 \\
1 & 2 & 6
\end{ytableau}
\end{array},
\qquad
\scrT_2 = \begin{array}{c}
\begin{ytableau}
\none & \none & \none & 2 & 4\\
\none & \none  & 3 \\
1 & 5 & 6
\end{ytableau}
\end{array}.
\]
Then, we have that
$$
T_{\scrT_0} = \begin{array}{c}
\begin{ytableau}
*(red!20) 1 & *(blue!20) 2 & *(gray!20) 3 & *(gray!20) 4 \\
*(blue!20) 2 & *(gray!20) 3 \\
*(magenta!20) 5\\
*(cyan!20) 6
\end{ytableau}
\end{array},
\qquad
T_{\scrT_1} = \begin{array}{c}
\begin{ytableau}
*(red!20) 1 & *(blue!20) 2 & *(gray!20) 5 & *(gray!20) 6 \\
*(blue!20) 2 & *(gray!20) 5 \\
*(magenta!20) 3 \\
*(cyan!20) 4
\end{ytableau}
\end{array}, 
\quad \text{and} \quad 
T_{\scrT_2} = \begin{array}{c}
\begin{ytableau}
*(red!20) 1 & *(blue!20) 5 & *(gray!20) 3 & *(gray!20) 6 \\
*(blue!20) 5 & *(gray!20) 3 \\
*(magenta!20) 2 \\
*(cyan!20) 4
\end{ytableau}
\end{array}.
$$
For instance, one can obtain $T_{\scrT_2}$ with the datum in \cref{Table: T_scrT_2}.
Since $T_{\scrT_0} = T_E$ and $T_{\scrT_1} = T$ are contained in $E$, $\eta(\scrT_0) = T_E$ and $\eta(\scrT_1) = T$.
On the other hand, since $T_{\scrT_2}$ is not an increasing tableau, it is not contained in $E$ and thus $\eta(\scrT_2) = 0$.

\begin{table}[t]
\[
\arraycolsep = 0.5em 
\begin{array}{c|c|c|c|c}
j  &  \epsilon_1,\epsilon_2,\ldots, \epsilon_l  &  C_1, C_2, \ldots, C_d  &  c_0,c_1,\ldots,c_d  & T_{\scrT_2}(\tH_j)
\\ \hline \hline 
1 & 1 & \{(1,1)\} & 1,1 & 
\begin{array}{l}
\scalebox{0.5}{$
\begin{ytableau}
*(red!20) 1 & *(blue!20)  & *(gray!20)  & *(gray!20)  \\
*(blue!20)  & *(gray!20)  \\
*(magenta!20)  \\
*(cyan!20) 
\end{ytableau}$}
\end{array} 
\\ \hline 
2 & 5 & \{(2,1)\}, \{(1,2)\} & 1,1,1 & 
\begin{array}{l}
\scalebox{0.5}{$
\begin{ytableau}
*(red!20)  & *(blue!20) 5 & *(gray!20)  & *(gray!20)  \\
*(blue!20) 5 & *(gray!20)  \\
*(magenta!20)  \\
*(cyan!20) 
\end{ytableau}$} 
\end{array}
\\ \hline 
3 & 3,6 & \{(2,2)\}, \{(1,3), (1,4)\} & 1,1,2 & \begin{array}{l}
\scalebox{0.5}{$
\begin{ytableau}
*(red!20)  & *(blue!20)  & *(gray!20) 3 & *(gray!20) 6 \\
*(blue!20)  & *(gray!20) 3  \\
*(magenta!20)  \\
*(cyan!20) 
\end{ytableau}$} 
\end{array} 
\\ \hline 
4 & 2 & \{(3,1)\} & 1,1 & 
\begin{array}{l}
\scalebox{0.5}{$
\begin{ytableau}
*(red!20)  & *(blue!20)  & *(gray!20)  & *(gray!20)  \\
*(blue!20)  & *(gray!20)  \\
*(magenta!20) 2  \\
*(cyan!20) 
\end{ytableau}$} 
\end{array}
\\ \hline 
5 & 4 & \{(4,1)\} & 1,1 & 
\begin{array}{l}
\scalebox{0.5}{$
\begin{ytableau}
*(red!20) & *(blue!20)  & *(gray!20)  & *(gray!20)  \\
*(blue!20)  & *(gray!20)  \\
*(magenta!20)  \\
*(cyan!20) 4
\end{ytableau}$}
\end{array}
\end{array}
\]
\smallskip
\caption{Datum for constructing $T_{\scrT_2}$ in \cref{Eg: eta map}(1)}
\label{Table: T_scrT_2}
\end{table}

(2) Letting
$$
T_E = \begin{array}{c}
\begin{ytableau}
1 & 2 & 4 & 5\\
2 & 3 \\
4
\end{ytableau}
\end{array},
$$
we have that
\[
\bal_E = (1,1,1,2) 
\quad \text{and} \quad
\begin{array}{c}
\begin{ytableau}
*(red!20) \tH_1 & *(blue!20) \tH_2 & *(magenta!20) \tH_4 & *(magenta!20) \tH_4 \\
*(blue!20) \tH_2 & *(gray!20) \tH_3 \\
*(magenta!20) \tH_4
\end{ytableau}
\end{array}.
\]
When $\scrT = \begin{array}{c}
\begin{ytableau}
\none & \none & \none & 3 \\
1 & 2 & 4 & 5 
\end{ytableau}
\end{array}$,
one sees that
\[
T_\scrT = \begin{array}{c}
\begin{ytableau}
*(red!20) 1 & *(blue!20) 2 & *(magenta!20) 3 & *(magenta!20) 5 \\
*(blue!20) 2 & *(gray!20) 4 \\
*(magenta!20) 3
\end{ytableau}
\end{array}.
\]
The filling $T_\scrT$ is an increasing gapless tableau, but $\eta(\scrT) = 0$ since $T_\scrT$ is not contained in $E$.
\end{example}

As a first step to prove that the map $\eta: \bfP_{\bal_E} \ra \bfG_E$ is a projective cover of $\bfG_E$, we  show that $\eta: \bfP_{\bal_E} \ra \bfG_E$ is a surjective $H_{m}(0)$-module homomorphism. 

\begin{lemma}\label{lem: surj linear map}
The map $\eta: \bfP_{\bal_E} \ra \bfG_E$ is a surjective $H_m(0)$-module homomorphism.
\end{lemma}

\begin{proof}
For each $T \in E$, let $\scrT_T$ be the filling of $\trd(\bal_E)$ whose $j$th column is filled with the elements of $T(\tH_j)$ so that they are increasing from top to bottom for all $1 \le j \le k+1$.
Then it follows straightforwardly from the definitions of $\sfread$, and $\itread$ that
$\sfread(T) = \itread(\scrT_T)$ for all $T \in E$. 
By the definition of $\bal_E$, we have $\scrT_{\bal_E} = \scrT_{T_E}$.
For the definition of $\scrT_{\bal_E}$, see \cref{subsec: Proj module}.
In addition, from \cref{thm: WBIM for G_E} and \cref{eq: Phi map}, we have the $H_m(0)$-module isomorphisms
\begin{align*}
\widetilde{f}: \bfG_E \ra \sfB(\sfread(T_E), \sfread(T'_E)), \quad
T \mapsto \sfread(T) \quad \text{for $T \in E$}
\end{align*}
and
\begin{align*}
\sfLread: \bfP_\bal \ra \sfB(w_0(\bal_\bullet^\rmc), w_0 w_0(\bal_\odot)), \quad 
\scrT \mapsto \itread(\scrT) \quad \text{for $\scrT \in \SRT(\bal)$.}
\end{align*}
Putting these altogether, we see that 
\[
w_0(\bal_\bullet^\rmc) = \itread(\scrT_{\bal_E}) = \itread(\scrT_{T_E}) = \sfread(T_E).
\]
If we prove $\sfread(T'_E) \preceq_L w_0 w_0(\bal_\odot)$, then the linear map 
\begin{align*}
\pr : \sfB(w_0(\bal_\bullet^\rmc), w_0 w_0(\bal_\odot)) \ra \sfB(\sfread(T_E), \sfread(T'_E)), 
\ 
\gamma \mapsto \begin{cases}
\gamma & \text{if $\gamma \in [\sfread(T_E), \sfread(T'_E)]_L$,} \\
0 & \text{otherwise}
\end{cases}
\end{align*}
is a surjective $H_n(0)$-module homomorphism by \cref{eq: definition of pr}.
In addition, we have
\[
\eta = \widetilde{f}^{-1} \circ \pr \circ \Phi,
\]
which implies that $\eta$ is a surjective $H_m(0)$-module homomorphism.
Thus, it suffices to show that 
\begin{align}\label{eq: ineq read and w0}
\sfread(T'_E) \preceq_L w_0 w_0(\bal_\odot).
\end{align}

In order to prove \cref{eq: ineq read and w0}, we first show that $\scrT_T \in \SRT(\bal_E)$ for all $T \in E$. 
Choose a $T \in E$ and an arbitrary $1 < j \le k+1$.
Let
\begin{align*}
x & = \text{the entry at the uppermost box in the $(j-1)$st column of $\scrT_T$ and} \\
y & = \text{the entry at the lowermost box in the $j$th column of $\scrT_T$}.
\end{align*}
It suffices to show that $x < y$ only in the case where the $(j-1)$st column and the $j$th column are connected.
Note that the $(j-1)$st column and the $j$th column are connected if and only if $\Bot_{d_{j-2}+1}(T_E)$ is weakly left of $\Top_{d_{j}}(T_E)$. 
Assume that $\Bot_{d_{j-2}+1}(T_E)$ is weakly left of $\Top_{d_{j}}(T_E)$.
If $\Top_{d_{j}}(T_E)$ is weakly below $\Bot_{d_{j-2}+1}(T_E)$, then $x < y$ since $T$ is an increasing tableau.
Suppose $\Top_{d_{j}}(T_E)$ is strictly above $\Bot_{d_{j-2}+1}(T_E)$.
For the sake of contradiction assume that  $x > y$.
Take $\sigma \in \SG_m$ satisfying that $T = \pi_\sigma \cdot T_E$ and a reduced expression $s_{i_p} \cdots s_{i_2} s_{i_1}$ for $\sigma$.
Since $x$ appears at $\Bot_{d_{j-2}+1}(T_E)$ and $y$ appears at $\Top_{d_{j}}(T_E)$ in $T$, there exists $1 \le r \le p$ such that 
\begin{align*}
T' (\Bot_{d_{j-2}+1}(T_E)) < T'(\Top_{d_{j}}(T_E))
\quad \text{and} \quad
\pi_{i_r} \cdot T' (\Bot_{d_{j-2}+1}(T_E)) > \pi_{i_r} \cdot T'(\Top_{d_{j}}(T_E)).
\end{align*}
Here, $T' = \pi_{i_{r-1}} \cdots \pi_{i_{2}} \pi_{i_{1}} \cdot T_E$.
Since the $\pi_{i_r}$-action swap $i_r$ and $i_r + 1$, the following hold:
\begin{align}\label{eq: T' and i_r}
\begin{aligned}
& T'(\Bot_{d_{j-2}+1}(T_E)) = i_r, \quad 
T'(\Top_{d_{j}}(T_E)) = i_{r} + 1,
\quad \text{and} \\
& \text{$i_r$ is a non-attacking descent in $T'$.}
\end{aligned}
\end{align}
However, since $\Bot_{d_{j-2}+1}(T_E)$ is below $\Top_{d_{j}}(T_E)$ in $T'$, the descent $i_r$ of $T'$ cannot be non-attacking.
Hence, \cref{eq: T' and i_r} cannot occur, which shows that $x < y$.

Since $\scrT_{T'_E} \in \SRT(\bal_E)$, we have
\[
\sfread(T'_E) = \itread(\scrT_{T'_E}) \preceq_L  w_0 w_0(\bal_\odot)
\]
as desired.
\end{proof}

We are ready to prove the main theorem of this section.

\begin{theorem}\label{thm: proj cover}
For any $E \in \calE_{\lambda;m}$, $\eta: \bfP_{\bal_E} \ra \bfG_E$ is a projective cover of $\bfG_E$.
\end{theorem}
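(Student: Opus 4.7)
The approach is to verify the criterion of \cref{lem: ess epi}, namely $\ker(\eta)\subseteq\rad(\bfP_{\bal_E})$. From the proof of \cref{thm: WBIM for G_E} we already have
\begin{equation*}
\ker(\eta)=\C\bigl\{\scrT\in\SRT(\bal_E)\mid\itread(\scrT)\notin[\sfread(T_E),\sfread(T'_E)]_L\bigr\},
\end{equation*}
so $B:=\SRT(\bal_E)\cap\ker(\eta)$ is a $\C$-basis for $\ker(\eta)$. By \cref{Lem: calT in Rad}, it suffices to prove $\scrT\not\preceq_{\SRT(\bal_E)}\scrT_{\bal_E}^{\odot}$ for every $\scrT\in B$; transporting this across the order isomorphism \cref{eq: order isom Phi}, the task reduces to showing $\itread(\scrT)\not\preceq_L w_0((\bal_E)_\odot^\rmc)$ for every $\scrT\in B$.

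The plan is to deduce this non-comparability from the single inequality $w_0((\bal_E)_\odot^\rmc)\preceq_L\sfread(T'_E)$. Indeed, together with transitivity it would place any $\scrT$ with $\itread(\scrT)\preceq_L w_0((\bal_E)_\odot^\rmc)$ inside the interval $[\sfread(T_E),\sfread(T'_E)]_L$ (using also the lower bound $\sfread(T_E)=w_0((\bal_E)_\bullet^\rmc)$ established in the proof of \cref{thm: WBIM for G_E}), forcing $\scrT\notin B$; this is the contrapositive of what we need.

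To prove this key inequality, I would introduce the tableau $T^{\odot}:=T_{\scrT_{\bal_E}^{\odot}}$ produced by feeding $\scrT_{\bal_E}^{\odot}$ into the construction $\scrT\mapsto T_{\scrT}$ of \cref{Subsec: WBIM module}, and establish the central combinatorial claim $T^{\odot}\in E$. Granted the claim, \cref{eq: source sink order} gives $T^{\odot}\preceq_E T'_E$, and \cref{lem: read and order}(2) upgrades this to $\sfread(T^{\odot})\preceq_L\sfread(T'_E)$. Since the constructions $\scrT\mapsto T_{\scrT}$ and $T\mapsto\scrT_T$ are mutually inverse on the appropriate sets (cf.\ the proofs of \cref{lem: surj linear map} and \cref{thm: WBIM for G_E}), $\scrT_{T^{\odot}}=\scrT_{\bal_E}^{\odot}$, so $\sfread(T^{\odot})=\itread(\scrT_{\bal_E}^{\odot})=w_0((\bal_E)_\odot^\rmc)$, and the desired inequality follows.

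The hard part will be the membership $T^{\odot}\in E$. This demands showing that $T^{\odot}$ is an increasing gapless tableau of shape $\lambda$ with maximum entry $m$ and that its data $\{(\Gamma_i(T^{\odot}),(T^{\odot})^{-1}(i))\mid i\in\calI(T^{\odot})\}$ coincide with those of $T_E$. Both properties are encoded in the design of $\bal_E$ in \cref{Def: bal^(j+1)}: the case distinction between $\cdot$ and $\oplus$ was arranged precisely so that, for each pair of consecutive descents $d_{j-1}<d_j$, the two corresponding columns of $\scrT_{\bal_E}^{\odot}$ share a row in $\trd(\bal_E)$ if and only if $\Bot_{d_{j-2}+1}(T_E)$ is weakly left of $\Top_{d_j}(T_E)$. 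I expect the verification to proceed by induction on $j$, leaning on the source characterization in \cref{Lem: source and sink cond}(1) to maintain strict increasingness and to force the correct pattern of repeated entries across adjacent $\tH$-strips.
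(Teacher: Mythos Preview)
Your proposal is correct and follows essentially the same route as the paper. Both arguments reduce the statement, via \cref{lem: ess epi} and \cref{Lem: calT in Rad}, to the central combinatorial claim $T_{\scrT_{\bal_E}^{\odot}}\in E$; the paper then concludes by noting that $\eta(\scrT)\in E$ for all $\scrT\in[\scrT_{\bal_E}^{\bullet},\scrT_{\bal_E}^{\odot}]$ and comparing dimensions, while you phrase the same implication as the order inequality $w_0((\bal_E)_\odot^\rmc)\preceq_L\sfread(T'_E)$, but these are equivalent in light of \cref{eq: scrT and ker}. One small point: rather than invoking a mutual-inverse property of $\scrT\mapsto T_{\scrT}$ and $T\mapsto\scrT_T$, you can read $\sfread(T^{\odot})=\itread(\scrT_{\bal_E}^{\odot})$ directly off \cref{eq: scrT and ker} once $T^{\odot}\in E$ is known; and for the membership itself, the paper carries out the verification by a direct case analysis (first checking $T^{\odot}$ is increasing, then matching each $\Gamma_i$) rather than an induction on $j$.
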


\begin{proof}
By \cref{Lem: calT in Rad}, we have 
$\C (\SRT(\bal_E) \setminus [\scrT_{\bal_E}^{\bullet}, \scrT_{\bal_E}^{\odot}]) \subseteq \rad(\bfP_{\bal_E})$.
Therefore, by \cref{lem: ess epi}, it suffices to show that 
\begin{align}\label{eq: ker in SRT}
\ker(\eta) \subseteq \C (\SRT(\bal_E) \setminus [\scrT_{\bal_E}^{\bullet}, \scrT_{\bal_E}^{\odot}]).
\end{align}

First, we claim that $T_{\scrT_{\bal_E}^{\odot}}$ is an increasing tableau.
Take any boxes $B_1, B_2 \in \tyd(\lambda)$ with $B_1 \neq B_2$ and $B_2$ is positioned weakly southeast of $B_1$, that is, $B_2 \in B_1 + (\Z_{\ge 0} \times \Z_{\ge 0} \setminus (0,0))$.
We need to show that $T_{\scrT_{\bal_E}^{\odot}}(B_1) < T_{\scrT_{\bal_E}^{\odot}}(B_2)$.
To prove it, we collect necessary notation.

Let $j_{B_1}, j_{B_2} \in [1,k+1]$ such that $B_1 \in \tH_{j_{B_1}}$ and $B_2 \in \tH_{j_{B_2}}$.
For $1 \le j \le k+1$, let $\tC_j$ be the set of the boxes in the $j$th column of $\trd(\bal_E)$ from left to right.
Then, we have $T_{\scrT_{\bal_E}^{\odot}}(\tH_{j_{B_1}}) = \scrT_{\bal_E}^{\odot}(\tC_{j_{B_1}})$ and $T_{\scrT_{\bal_E}^{\odot}}(\tH_{j_{B_2}}) = \scrT_{\bal_E}^{\odot}(\tC_{j_{B_2}})$.
Letting $(\bal_E)_{\odot} = (a_1,a_2,\ldots, a_p)$, set
$$
A_0 := 0 
\quad \text{and} \quad
A_{r} := a_1 + \cdots + a_r \quad \text{for $1 \le r \le p$.}
$$
By the definition of $\scrT^{\odot}_{\bal_E}$, for each $1 \le r \le p$, there exist $u_r, u'_r \in[1,k+1]$ such that $\bigcup_{u_r \le j \le u'_r} \scrT^{\odot}_{\bal_E}(\tC_j) = [A_{r-1} + 1, A_{r}]$.
Take $r_{B_1}$ and $r_{B_2}$ in $[1,p]$ satisfying that $\tC_{j_{B_1}} \subseteq [A_{r_{B_1}-1} + 1, A_{r_{B_1}}]$ and $\tC_{j_{B_2}} \subseteq [A_{r_{B_2}-1} + 1, A_{r_{B_2}}]$, respectively.

Considering the assumption that $B_2$ is positioned weakly southeast of $B_1$ together with the definitions of $\tH_{j_{B_1}}$ and $\tH_{j_{B_2}}$, we see that $j_{B_1} \le j_{B_2}$, thus $r_{B_1} \le r_{B_2}$.
If $r_{B_1} < r_{B_2}$, then we have $\scrT^{\odot}_{\bal_E}(B_1) < \scrT^{\odot}_{\bal_E}(B_2)$ by the definition of $r_{B_1}$ and $r_{B_2}$.
For the remaining case, suppose that $r_{B_1} = r_{B_2}$.
Note that $\tC_j$ and $\tC_{j+1}$ are disconnected for all $j \in [u_{r_{B_1}}, u'_{r_{B_1}}-1]$.
This implies that $\Bot_{d_{j-2}+1}(T_E)$ is strictly right of $\Top_{d_{j}}(T_E)$, equivalently, the leftmost box in $\tH_{j-1}$ is strictly right of the rightmost box of $\tH_{j}$.
Combining this observation with the definitions of $\tH_{j-1}$ and $\tH_{j}$ yields that the leftmost box in $\tH_{j-1}$ is strictly above the rightmost box of $\tH_{j}$.
Therefore, if $j_{B_1} < j_{B_2}$, then $B_2 \notin B_1 + (\Z_{\ge 0} \times \Z_{\ge 0} \setminus (0,0))$.
If $j_{B_1} = j_{B_2}$, then combining $B_2 \in B_1 + (\Z_{\ge 0} \times \Z_{\ge 0} \setminus (0,0))$ with (P2) written in the first paragraph of \cref{Subsec: poset structure} yields that $B_1$ and $B_2$ are in the same row.
It follows that $B_2$ is strictly east of $B_1$, thus $T_{\scrT_{\bal_E}^{\odot}}(B_1) < T_{\scrT_{\bal_E}^{\odot}}(B_2)$.

Next, we claim that $T_{\scrT_{\bal_E}^{\odot}} \in E$, equivalently, $T_{\scrT_{\bal_E}^{\odot}} \sim T_E$.
Take any $i \in \calI(T_E)$.
By the construction of $T_{\scrT_{\bal_E}^{\odot}}$, $|\calI(T_{\scrT_{\bal_E}^{\odot}})| = |\calI(T_E)|$ and there exists $i' \in [1,m]$ such that $T_{\scrT_{\bal_E}^{\odot}}^{-1} (i') = T_E^{-1}(i)$.
Let us show that $\Gamma_{i'}(T_{\scrT_{\bal_E}^{\odot}}) = \Gamma_i(T_E)$. 
Let
\[
\mathrm{Rect}_i := 
\left\{
(r,c) \in \tyd(\lambda) \setminus T_E^{-1}(i) \mid 
r^{(i)}_{\sft} \le r \le r^{(i)}_{\sfb} 
\quad \text{and} \quad 
c^{(i)}_{\sfb} \le c \le c^{(i)}_{\sft} 
\right\}.
\]
Choose any $B \in \mathrm{Rect}_i$ and let $j_{1}, j_{2} \in [1,k+1]$ with
$$
B \in \tH_{j_{1}}
\quad \text{and} \quad
T_E^{-1}(i) \subset \tH_{j_{2}}.
$$
Set $x =T_{E}(B)$ and $x' = T_{\scrT_{\bal_E}^{\odot}}(B)$.
In order to prove $\Gamma_{i'}(T_{\scrT_{\bal_E}^{\odot}}) = \Gamma_i(T_E)$, it suffices to show that $x < i$ if and only if $x' < i'$.
We omit the proof of the ``if'' part since it can be proved in the same manner as the ``only if'' part.

To prove the ``only if'' part, suppose that $x < i$, but $x' \ge i'$.
Since $B \notin T_E^{-1}(i)$ and $x'=T_{\scrT_{\bal_E}^{\odot}}(B)$, we have $x' > i'$.
Putting \cref{Lem: source and sink cond}, the inequality $x < i$, and $B \in \mathrm{Rect}_i$ together, one can derive that $j_1 < j_2$.
By the definition of $\scrT_{\bal_E}^{\odot}$, given $l_1, l_2 \in [1, k+1]$ with $l_1 < l_2$, if there exists $a \in \scrT_{\bal_E}^{\odot}(\tC_{l_1})$ and $b \in \scrT_{\bal_E}^{\odot}(\tC_{l_2})$ such that $a > b$, then $\tC_p$ is disconnected to $\tC_{p+1}$ for all $l_1 \le p < l_2$.
Because $x' \in T_{\scrT_{\bal_E}^{\odot}}(\tH_{j_{1}})$ and $i' \in T_{\scrT_{\bal_E}^{\odot}}(\tH_{j_{2}})$, this property, together with the inequalities $x' > i'$ and $j_1 < j_2$, implies that $\tC_p$ is disconnected to $\tC_{p+1}$ for all $j_{1} \le p < j_{2}$.
By \cref{Def: bal^(j+1)}, $\Bot_{d_{p-1}+1}(T_E)$ is strictly right of $\Top_{d_{p+1}}(T_E)$  for all $j_{1} \le p < j_{2}$.
Since $\Bot_{d_{p-1}+1}(T_E)$ is the leftmost box of $\tH_{p}$ and $\Top_{d_{p+1}}(T_E)$ is the rightmost box of $\tH_{p+1}$ for all $j_{1} \le p < j_{2}$, every box in $\tH_{j_1}$ is placed strictly right of each box in $\tH_{j_2}$.
Since $B \in \tH_{j_1}$ and $T_E^{-1}(i) \subset \tH_{j_2}$, this is a  contradiction to the choice of $B \in \mathrm{Rect}_i$.
Thus, if $x < i$, $x'$ must be less than $i'$ as desired.

Now, we have $\eta(\scrT_{\bal_E}^{\odot}) = T_{\scrT_{\bal_E}^{\odot}} \in E$.
Combining this with \cref{lem: surj linear map}, we see that $\eta(\scrT) =  T_\scrT \in E$ for all $\scrT \in [\scrT_{\bal_E}^{\bullet}, \scrT_{\bal_E}^{\odot}]$.
In addition, the definition of $\eta$ together with \cref{lem: surj linear map} imply that
$$
|\{\scrT \in \SRT(\bal_E) \mid T_\scrT \in E\}| 
= \dim(\mathrm{Im}(\eta)) 
= |\SRT(\bal_E)| - \dim (\ker(\eta)).
$$
It follows that the set $\{\scrT \in \SRT(\bal_E) \mid T_\scrT \notin E\}$ is a basis for $\ker(\eta)$.
Putting these together yields the inclusion \cref{eq: ker in SRT}.
\end{proof}

\begin{remark}
\cref{Lem: calT in Rad}
provides a method for finding a projective cover of weak Bruhat interval modules of the form $\sfB(w_0(\alpha), \rho)$, where $\alpha \models n$ and $\rho \in \SG_n$ with $w_0(\alpha) \preceq_L \rho$. 
This approach was recently applied to find a projective cover of poset modules associated with regular Schur labeled skew shape posets. 
For further details, refer to \cite[Section 5]{24KLO}.

Independently, Bardwell and Searles \cite{24BD} have introduced a type-independent method for finding projective covers of various modules of the $0$-Hecke algebras of finite Coxeter type. 
For more information, see \cite[Theorem 4.2]{24BD}.
\end{remark}

\section{Further avenues}\label{Sec: Further avenues}
(1) Pechenik and Yong \cite{17PY2} studied a theory of genomic tableaux parallel to the theory for increasing gapless tableaux developed by Thomas and Yong \cite{09TY}.
In order to relate genomic tableaux with increasing gapless tableaux, Pechenik and Yong \cite{17PY2} introduced a map, called the $K$-standardization, sending a genomic tableau to an increasing gapless tableau.
From the viewpoint of this correspondence, increasing gapless tableaux play a similar role to standard Young tableaux.  
However, to the best of the authors' knowledge, while the relationship between standard Young tableaux and permutations is well understood, the relationship between increasing gapless tableaux and permutations is not well studied.
Our standardized reading $\sfread$ can be helpful to study the relationship because it maps increasing gapless tableaux to permutations.
For this reason, it would be interesting to investigate the standardized reading $\sfread$ and the set $\{\sfread(T) \mid T \in E\}$ for each $E \in \calE_{\lambda;m}$.
\medskip

(2) For $\alpha \models n$, Tewari and van Willigenburg~\cite{15TW} introduced $H_n(0)$-modules $\bfS_\alpha$ such that $\ch([\bfS_\alpha])$ is equal to the \emph{quasisymmetric Schur function $\calS_\alpha$}.
As a generalization of $\calS_\alpha$, in \cite{19TW}, they also introduced $H_n(0)$-modules $\bfS_\alpha^{\sigma}$ for all $\sigma \in \SG_{\ell(\alpha)}$.
Then, they decomposed $\bfS^\sigma_\alpha$ into a direct sum of cyclic submodules $\bfS^\sigma_{\alpha,E}$.
In the case where $\sigma = \id$, K\"onig~\cite{19Konig} proved that all $\bfS^\id_{\alpha,E}$ are indecomposable.
Later, Choi, Kim, Nam, and Oh~\cite{20CKNO} showed K\"onig's method still works for all $\sigma \in \SG_{\ell(\alpha)}$ under suitable adjustments, proving that all $\bfS^\sigma_{\alpha,E}$ are also indecomposable.

In this paper, we give a direct sum decomposition $\bfG_{\lambda;m} = \bigoplus_{E \in \calE_{\lambda;m}} \bfG_E$. 
A natural question that arises is whether or not $\bfG_E$ is indecomposable for all $E \in \calE_{\lambda;m}$.
However, there exists $E \in \calE_{\lambda;m}$ such that $\bfG_E$ is not indecomposable.
For example, the set
\[
E = \left\{T_1 = \begin{array}{l}
\begin{ytableau}
1 & 2 & 3 \\
2 & 3 \\
4
\end{ytableau}
\end{array}, ~
T_2 =
\begin{array}{l}
\begin{ytableau}
1 & 2 & 4 \\
2 & 4 \\
3
\end{ytableau}
\end{array}
\right\}
\]
is an equivalence class in $\calE_{(3,2,1);4}$ such that $\bfG_E = \C(T_1 - T_2) \oplus \C T_2$.
It would be interesting to characterize for which $E \in \calE_{\lambda;m}$ the $H_m(0)$-module $\bfG_E$ is indecomposable.
\medskip

(3) In \cite[Example 6.7]{17PY2}, Pechenik and Yong pointed out that $U_\lambda$ is not Schur-positive for some partition $\lambda$.
Later, Pechenik \cite{20Pechenik} provided a signed Schur expansion for $U_\lambda$.
Therein, it was also proved that $U_\lambda$ is Schur-positive for all partitions $\lambda$ with $\ell(\lambda) = 2$.
To be precise, for all $\lambda = (\lambda_1, \lambda_2) \vdash n$,
\begin{align}\label{eq: schur expansion for genomic}
U_\lambda = \sum_{l_\lambda \le m \le n} \  \sum_{\mu \in \mathsf{Par}(\lambda; m)} s_{\mu},
\end{align}
where $l_\lambda := \max \{\lambda_1, \lambda_2 +1\}$, $\mathsf{Par}(\lambda; n) := \{(\lambda_1, \lambda_2)\}$, and
\begin{align*}
\mathsf{Par}(\lambda; m) & := 
\begin{cases}
\left\{(\lambda_1 - k_m, \lambda_1 - k_m, 1^{k_m})\right\} & \text{if $\lambda_1 = \lambda_2$,}\\
\left\{(\lambda_1 - k_m, \lambda_2 - k_m, 1^{k_m}), (\lambda_1 - k_m, \lambda_2 - k_m + 1, 1^{k_m-1}) \right\} & \text{if $\lambda_1 > \lambda_2$}
\end{cases}
\end{align*}
for all $l \le m < n$.
Here, $k_m := n-m$ and $s_\mu := 0$ if $\mu$ is not a partition.

In this paper, we have constructed the $H_m(0)$-module $\bfG_{\lambda;m}$ and show that $\ch([\bfG_{\lambda;m}]) = \GSm{\lambda}{m}$. 
On the other hand, for each $\alpha \models m$, Searles \cite{19Searles} introduced the $H_m(0)$-module $X_\alpha$ such that $\ch([X_\alpha]) = \mathsf{ES}_\alpha$,
where $\mathsf{ES}_\alpha$ is the \emph{extended Schur function} introduced in \cite{19AS}.
It was shown in \cite[Proposition 6.15]{19AS} that $\mathsf{ES}_\lambda = s_\lambda$ for all $\lambda \vdash m$. 
The study of the representation theoretic interpretation for \cref{eq: schur expansion for genomic} will be pursued in the near future by using the $0$-Hecke modules $\bfG_{\lambda;m}$ $(l_\lambda \le m \le n)$ and $X_\mu$ $(\mu \in \mathsf{Par}(\lambda;m))$.
In this direction, we leave the following conjecture.

\begin{conjecture}\label{conj: repn interpretation}
Let $\lambda$ be a partition with $\ell(\lambda) \le 2$.
For each $l_\lambda \le m \le n$, there exists a partition $\{\calE_\mu \mid \mu \in \mathsf{Par}(\lambda; m)\}$ of $\calE_{\lambda;m}$ satisfying the following two conditions:
\begin{enumerate}[label = {\rm (\arabic*)}]
\item For each $\mu \in \mathsf{Par}(\lambda; m)$, $\sum_{E \in \calE_\mu} \ch([\bfG_E]) = s_{\mu}$.
\item For each $\mu \in \mathsf{Par}(\lambda; m)$, there exist a total order $\prec_\mu$ on $\calE_\mu = \{E_1 \prec_\mu E_2 \prec_\mu \cdots \prec_\mu E_{|\calE_\mu|}\}$ and a filtration 
\[
M_0 = \{0\} \subseteq M_1 \subseteq M_2 \subseteq \cdots \subseteq M_{|\calE_\mu|} = X_\mu
\]
of $H_m(0)$-modules such that $\bfG_{E_i} \cong M_{i}/M_{i-1}$ for all $1 \le i \le |\calE_\mu|$.
\end{enumerate}
\end{conjecture}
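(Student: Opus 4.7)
The plan is: for each $E \in \calE_{\lambda;m}$, define a shape $\mu(E) \in \mathsf{Par}(\lambda;m)$ using combinatorial invariants of the source tableau $T_E$, and set $\calE_\mu := \mu^{-1}(\mu)$. To prove (1), construct a descent-preserving bijection $\Psi_\mu: \bigsqcup_{E \in \calE_\mu} E \to \mathsf{SYT}(\mu)$; to prove (2), use $\Psi_\mu$ to transport a filtration on Searles' module $X_\mu$ whose subquotients recover $\bfG_{E_i}$. Write $\lambda = (\lambda_1, \lambda_2)$ with $\lambda_1 \ge \lambda_2$; the case $\ell(\lambda) = 1$ is immediate, so I restrict to $\ell(\lambda) = 2$.

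Since rows and columns of $T \in \IGLTm{\lambda}{m}$ are strictly increasing, each repeated value of $T$ appears exactly once in each row, hence $|\calI(T)| = n - m = k_m$. Define $\mu(E)$ as follows: when $\lambda_1 = \lambda_2$ there is only one shape to assign, namely $(\lambda_1 - k_m,\, \lambda_1 - k_m,\, 1^{k_m})$. When $\lambda_1 > \lambda_2$, distinguish the two shapes $\mu_a := (\lambda_1 - k_m,\, \lambda_2 - k_m,\, 1^{k_m})$ and $\mu_b := (\lambda_1 - k_m,\, \lambda_2 - k_m + 1,\, 1^{k_m - 1})$ by a condition on $T_E$ such as whether $\Top_m(T_E)$ lies in row $1$ or row $2$, or equivalently whether the largest entry of $\calI(T_E)$ has its top occurrence inside the first $\lambda_2$ columns. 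With a judicious choice of this condition, $\mu(E)$ will always be an actual partition in $\mathsf{Par}(\lambda; m)$; in particular, when $\mu_a$ fails to be a partition, all classes should land in $\mu_b$, which is consistent with $s_{\mu_a} = 0$ on the Schur side.

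For (1), build $\Psi_\mu$ explicitly: given $T \in \bigsqcup_{E \in \calE_\mu} E$, place the $k_m$ (resp.\ $k_m - 1$) repeated values of $T$, in increasing order, into the single-box leg of $\mu$; the remaining entries fill the top two rows in the unique way that preserves their relative row-positions in $T$. Verify that $\Des(T) = \Des(\Psi_\mu(T))$ by matching the descent criterion $r_\sft^{(i)} < r_\sfb^{(i+1)}$ for increasing gapless tableaux with the standard descent convention on $\mathsf{SYT}(\mu)$; doubled-entry descents of $T$ correspond to leg descents in $\Psi_\mu(T)$, while non-repeated entries match the rectangular part term-by-term. Once this is proved, the identity
\[
\sum_{E \in \calE_\mu} \ch([\bfG_E]) = \sum_{T} F_{\comp(T)} = \sum_{Q \in \mathsf{SYT}(\mu)} F_{\comp(\Des(Q))} = s_\mu
\]
follows immediately from the fundamental quasisymmetric expansion of $s_\mu$.

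For (2), use $\Psi_\mu$ to identify the basis $\mathsf{SYT}(\mu)$ of $X_\mu$ with $\bigsqcup_{E \in \calE_\mu} E$. Order the classes $E_1 \prec_\mu E_2 \prec_\mu \cdots$ compatibly with the left weak Bruhat order on $\{\sfread(T_{E_i})\}$, set $M_i := \C \bigcup_{j \le i} \Psi_\mu(E_j)$, and verify that (a) each $M_i$ is $H_m(0)$-stable in $X_\mu$ and (b) the induced action on $M_i / M_{i-1}$ agrees with the action on $\bfG_{E_i}$ via $\Psi_\mu^{-1}$. Both reductions amount to comparing Searles' reading on $\mathsf{SYT}(\mu)$ with the standardized reading $\sfread$ introduced in \cref{def: standard reading}. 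The main obstacle will be choosing the distinguishing condition for $\mu(E)$ (in the case $\lambda_1 > \lambda_2$) sharply enough that $\Psi_\mu$ is provably descent-preserving, while simultaneously arranging $\prec_\mu$ so that the $M_i$ are $H_m(0)$-stable. Since both $\bfG_{E_i}$ (by \cref{rem: lift of f}) and Searles' $X_\mu$ admit weak Bruhat interval descriptions, the desired filtration should arise from a natural decomposition of the weak Bruhat interval controlling $X_\mu$ into the sub-intervals $[\sfread(T_{E_i}), \sfread(T'_{E_i})]_L$; making this precise is what requires the most care, but once established the filtration property and the subquotient identifications follow from the results of \cref{Sec: weak bruhat interval module}.
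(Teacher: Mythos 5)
This statement is labelled a \emph{conjecture} in the paper, and the paper offers no proof --- it only records a computational check that the conjecture holds for $|\lambda| \le 8$ (via Pechenik's descent-preserving bijection from $\IGLT(\lambda)$ to standard Young tableaux in the two-row case). So there is no paper proof against which your argument can be compared; what you have written is a plan for a proof of an open statement, and it must be judged on its own.

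Your plan has the right flavour --- it follows exactly the hint the paper drops, namely that Pechenik's descent-preserving bijections should transport both the quasisymmetric and the module-theoretic structure --- but as written it has genuine gaps, several of which you acknowledge yourself. Concretely: (i) the distinguishing condition assigning $\mu(E) \in \{\mu_a, \mu_b\}$ when $\lambda_1 > \lambda_2$ is only gestured at (``whether $\Top_m(T_E)$ lies in row $1$ or row $2$ \ldots{} with a judicious choice''); until this is pinned down, the descent-preservation of $\Psi_\mu$ cannot be checked, so step (1) is not actually established. (ii) Even granting a descent-preserving $\Psi_\mu$, step (2) requires that each $M_i := \C \bigcup_{j\le i} \Psi_\mu(E_j)$ be $H_m(0)$-stable inside $X_\mu$ and that the subquotient action match $\bfG_{E_i}$; you flag this as ``both reductions amount to comparing Searles' reading with $\sfread$,'' but this is precisely where the work lies. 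Descent-preservation of a set bijection does not by itself give $\pi_i$-equivariance of the induced linear map, because the $\pi_i$-action on an increasing gapless tableau distinguishes attacking from non-attacking descents while Searles' action on standard diagram tableaux uses a different rule; the two would have to be reconciled case by case. (iii) The closing claim that ``the desired filtration should arise from a natural decomposition of the weak Bruhat interval controlling $X_\mu$ into the sub-intervals $[\sfread(T_{E_i}),\sfread(T'_{E_i})]_L$'' is not obviously true: a priori there is no reason that those intervals sit inside the interval presenting $X_\mu$, nor that they stack in a way making each initial union closed under the action --- indeed the very need for a filtration (rather than a direct sum) signals that the $\C$-spans of these pieces are not individually submodules of $X_\mu$. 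To turn this proposal into a proof you would need to (a) fix the assignment $E \mapsto \mu(E)$ explicitly, (b) prove descent-preservation of $\Psi_\mu$ by the criterion $r_\sft^{(i)} < r_\sfb^{(i+1)}$, and (c) exhibit the total orders $\prec_\mu$ and verify $H_m(0)$-stability of the $M_i$ directly, likely via the weak Bruhat interval descriptions on both sides together with \cref{eq: definition of pr}-type projections.
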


Let $\lambda = (\lambda_1, \lambda_2) \vdash n$.
In case where $\lambda_1 = \lambda_2$, Pechenik defined a descent preserving bijection from $\IGLT(\lambda)$ to $\bigcup_{l_\lambda \le m \le n} \bigcup_{\mu \in \mathsf{Par}(\lambda; m)} \mathrm{SYT}(\mu)$ in \cite[Proof of Proposition 2.1]{14Pechenik}.
Here, $\mathrm{SYT}(\mu)$ is the set of standard Young tableaux of shape $\mu$.
In order to prove \cref{eq: schur expansion for genomic}, Pechenik used this bijection and a similar bijection for the case where $\lambda_1 > \lambda_2$.
For details, see \cite[Proof of Proposition 4.3]{20Pechenik}.
Using these bijections, we observe that \cref{conj: repn interpretation} is true when $|\lambda| \le 8$. Further, in this case, we also observe that the bijection induces an $H_m(0)$-module isomorphism from $\bfG_{E_i}$ to $M_{i}/M_{i-1}$ for all $1 \le i \le |\calE_\mu|$.

\section*{Acknowledgments.}
The authors are deeply grateful to the anonymous referees for their meticulous reading of the manuscript and their invaluable advice.

\bibliographystyle{abbrv}
\bibliography{references}

\end{document}